\documentclass[11pt]{article}
\usepackage[tbtags]{amsmath}
\usepackage{amssymb}
\usepackage{amsthm}
\usepackage[misc]{ifsym}
\usepackage{cases}
\usepackage{mathrsfs}
\usepackage{subfigure}
\usepackage{mathtools}
\usepackage{bbm}
\usepackage{enumerate}
\usepackage{xcolor}
\usepackage{float}
\usepackage{graphicx}
\usepackage{epstopdf}

\numberwithin{equation}{section} 
\setlength{\textwidth}{160mm} \setlength{\textheight}{218mm}
\oddsidemargin=2 mm \topskip 0.5cm \topmargin=-0.5in
\normalsize
\title{\bf Stochastic Linear-Quadratic Optimal Control Problems with Markovian Regime Switching and $H_\infty$ Constraint under Partial Information \thanks{This work is supported by National Natural Science Foundations of China (12471419, 12271304), and Shandong Provincial Natural Science Foundation (ZR2024ZD35).}}
\author{\normalsize Na Xiang\thanks{\it School of Mathematics, Shandong University, Jinan 250100, P.R. China, E-mail: 202211967@mail.sdu.edu.cn} , Jingtao Shi\thanks{\it School of Mathematics, Shandong University, Jinan 250100, P.R. China, E-mail: shijingtao@sdu.edu.cn}}
\newtheorem{mypro}{Proposition}[section]
\newtheorem{mythm}{Theorem}[section]
\newtheorem{mydef}{Definition}[section]
\newtheorem{mylem}{Lemma}[section]
\newtheorem{Remark}{Remark}[section]
\newtheorem{mycor}{Corollary}[section]
\begin{document}
	\maketitle
	
\noindent{\bf Abstract:}\quad This paper is concerned with a stochastic linear-quadratic optimal control problem of Markovian regime switching system with model uncertainty and partial information, where the information available to the control is based on a sub-$\sigma$-algebra of the filtration generated by the underlying Brownian motion and the Markov chain. Based on $H_\infty$ control theory, we turn to deal with a soft-constrained zero-sum linear-quadratic stochastic differential game with Markov chain and partial information. By virtue of the filtering technique, the Riccati equation approach, the method of orthogonal decomposition, and the completion-of-squares method, we obtain the closed-loop saddle point of the zero-sum game via the optimal feedback control-strategy pair. Subsequently, we prove that the corresponding outcome of the closed-loop saddle point satisfies the $H_\infty$ performance criterion. Finally, the obtained theoretical results are applied to a stock market investment problem to further illustrate the practical significance and effectiveness.

\vspace{2mm}

\noindent{\bf Keywords:}\quad Stochastic linear-quadratic optimal control, stochastic differential game, partial information, Markovian regime switching, $H_\infty$ control, Riccati equation

\vspace{2mm}

\noindent{\bf Mathematics Subject Classification:}\quad 93E20, 49N10, 60H10, 93B36, 93C41, 60J28, 91A15

\section{Introduction}

The \emph{stochastic linear-quadratic} (SLQ) optimal control problems constitute a class of extremely important optimal control problems in stochastic optimization theory, since they can model many problems in applications, and more importantly, many nonlinear control problems can be reasonably approximated by the SLQ problems. On the other hand, solutions of SLQ problems exhibit elegant properties due to their simple and nice structures. 
The study of SLQ optimal control problems originated from the works of Kushner \cite{Kushner62} and Wonham \cite{Wonham68} in the 1960s. Bismut \cite{Bismut76} studied SLQ optimal control problems with random coefficients using functional analysis techniques and obtained the optimal control in a random feedback form. Chen et al. \cite{Chen98} pioneered the investigation of SLQ optimal control problems with indefinite control weight costs, which presented new stochastic Riccati equations involving complicated nonlinear terms. The monograph by Yong and Zhou \cite{Yong99} studied the SLQ optimal control problems via stochastic Riccati equations and presented a comprehensive survey on the SLQ optimal control theory. Tang \cite{Tang03} discussed the existence and uniqueness of the associated stochastic Riccati equation for general SLQ optimal control problems with random coefficients and state control dependent noise via the stochastic flows, which solved Bismut and Peng's long-standing open problem. There has been extensive research on SLQ optimal control problems, and significant extensions have been developed based on it, such as open-loop and closed-loop solvabilities, mean-field type, risk-sensitive, time-inconsistent, and jump-diffusion models, etc. For more recent developments on SLQ optimal control problems, readers may refer to \cite{Li18,Li16,Li25,Oh25,Sun16,Sun20,Sun24,Tang15,Xiong25}. 

In practice, model uncertainty is widespread in the control systems. One extension of the SLQ optimal control problem involves the presence of external unknown disturbances in the state equation. Along this line, the stochatic $H_2/H_\infty$ control problem has been extensively studied via various approaches, including the \emph{linear matrix inequality} (LMI) technique, convex optimization approach, the Nash game approach.  Among these different methods to this class of worst-case design problems, the one that uses the framework of dynamic differential game theory seems to be the most natural. The Nash game approach is a classical technique to deal with the $H_2/H_\infty$ control problem. By constructing two performances associated with $H_\infty$ robustness and $H_2$ optimization, respectively, the $H_2/H_\infty$ control can be converted into finding the Nash equilibrium point. 
Limebeer et al. \cite{Limebeer94} was the first pioneering work to solve the deterministic mix $H_2/H_\infty$ control problem by means of the theory of nonzero-sum games. The resulting $H_2/H_\infty$ controller admits a linear state feedback representation characterized by the solution to a pair of cross-coupled Riccati equations. Chen and Zhang \cite{Chen04} generalized the work of \cite{Limebeer94} to the stochastic scenario with state-dependent noise, including both finite and infinite horizon cases and shown that finite (infinite) horizon stochastic $H_2/H_\infty$ control has close relation to a pair of coupled differential Riccati equations (a pair of coupled algebraic Riccati equations, correspondingly). Zhang et al. \cite{Zhang05} extended the result of  \cite{Chen04} to the system with all state, control and external disturbance dependent noise, where four cross-coupled matrix-valued differential and algebraic equations were introduced to express the finite and infinite horizon stochastic $H_2/H_\infty$ control. Then, Zhang et al. \cite{Zhang06-1} considered a general finite horizon nonlinear stochastic $H_2/H_\infty$ control with state, control and external disturbance dependent noise, and proved that the mixed $H_2/H_\infty$ control is associated with the four cross-coupled Hamilton–Jacobi equations. Subsequently, the stochastic $H_2/H_\infty$ control theory has experienced rapid development. For more details on the stochastic $H_2/H_\infty$ control problem, readers may refer to \cite{Wang21,Wang23,Xiang25-1,Zhang24,Zhang17}. 
On the other hand, $H_\infty$ optimal control problem has also been extensively investigated. In fact, $H_\infty$ optimal control problem is a minimax optimization problem, and hence a zero-sum game, where the controller can be viewed as the minimizing player and disturbance as the maximizing player. It is closely associated with the zero-sum game, and compared with the $H_2/H_\infty$ control problem, it can avoid solving the cross-coupled Riccati equations. The monograph by Ba\c{s}ar and Bernhard \cite{Basar95}  systematically elaborates on the well-known relationship between $H_\infty$ optimization and LQ zero-sum differential games. Pan and Ba\c{s}ar \cite{Pan95} studied the $H_\infty$ optimal control problem with a Markov chain in both finite and infinite horizon cases, where the controller has access to perfect or imperfect continuous state measurements. Ugrinovskii \cite{Ugrinovskii98} considered the stochastic $H_\infty$ control problem with state-dependent noise and obtained the state feedback controller that guarantees a prescribed level of disturbance attenuation for all admissible stochastic uncertainties. Hinrichsen and Pritchard \cite{Hinrichsen98} investigated the stochastic $H_\infty$ control problem with state- and control-dependent noise and proved a bounded real lemma for stochastic systems with deterministic and stochastic perturbations. van den Broek et al. \cite{van03} studied the existence of Nash equilibria in LQ differential games on an infinite planning horizon, where the system is disturbed by deterministic noise and the strategy spaces are of the static linear feedback type and considered the soft-constrained and hard-bounded cases. Zhang and Chen \cite{Zhang06-2} discussed the stochastic $H_\infty$ control for nonlinear systems with both state- and disturbance-dependent noise, including finite and infinite horizon cases. By means of two kinds of Hamilton–Jacobi equations, both infinite and finite horizon nonlinear stochastic $H_\infty$ control designs were developed. There are various works on the stochastic $H_\infty$ control problem, one can refer to \cite{Huang17,Liang22,Mukaidani22,Xiang25-2} and the references cited therein.

Markov regime switching models have been widely used in stochastic optimal control problems in recent years. In financial engineering, bank interest rates, stock appreciation and volatility are modulated by Markov processes (e.g., bear market and bull market), which can more directly characterize the factors and events that do not change frequently but exert a significant impact on the long-term trends of the system. There is a vast literature on SLQ optimal control problems involving Markov chains, see, for examples, \cite{Chen25,Li02,Li03,Wen23,Wu25,Zhang21}. For the $H_2/H_\infty$ control problems with regime switching jumps, one can refer to \cite{Sheng14,Wang21,Zhang25}. For the $H_\infty$ control problems with regime switching jumps, one can refer to \cite{Lin09,Pan95,Wang17}. In addition, regime switching model has also been widely applied in finance, such as the mean-variance portfolio selection and investment-consumption problem, see, for examples, \cite{Yin04,Zhang12,Zhou03}.

We note that in the vast majority of the aforementioned literature, player can only make decisions with complete information. However, in practical problems, we often lack complete information and have to make decisions based on partial information. For a stochastic control problem, if the available information comes from an observation equation, the corresponding control problem is referred to as a partially observed optimal control problem, see,  for examples, \cite{Li20,Oh25,Si25,Sun23,Wang15}. On the other hand, if the available information is directly given by an abstract sub-filtration that does not depend on the control, the corresponding control problem is termed a stochastic optimal control problem with partial information, see \cite{Huang09,Li25,Moon25,Shi16,Zheng21}.

Motivated by the aforementioned literature, we consider an SLQ optimal control problem with Markovian regime switching and model uncertainty under partial information, where both the drift and diffusion terms of the state equation and the cost functional contain the control and the external unknown disturbance. It is required that the information available to the control is based on a sub-$\sigma$-algebra of the filtration generated by the underlying Brownian motion and the Markov chain. Based on $H_\infty$ control theory and by virtue of the close relationship between $H_\infty$ control problems and LQ zero-sum stochastic differential games, we transform this problem into a soft-constrained zero-sum LQ stochastic differential game with Markov chain and partial information, where the control can be viewed as the minimizing player with partial information $\mathbb{G}$ and the disturbance as the maximizing player with complete information $\mathbb{F}$. As with other existing literature, we present the definition of the closed-loop saddle point of the LQ zero-sum game with Markov chain and partial information (see Definition \ref{Def-CLSP}). To preliminarily guarantee the existence of the $H_\infty$ optimal control, we analyze the disturbance attenuation parameter $\gamma$, discuss the properties of the corresponding open-loop upper value $\overline{V}_\gamma^0(0,0,i)$ with respect to the disturbance attenuation parameter $\gamma$ (see Proposition \ref{prop-uv} and Corollary \ref{cor1}), and derive a threshold level $\gamma^*$ for the $H_\infty$ control problem (see Proposition \ref{prop2.1}). Inspired by the work of Yu \cite{Yu15}, we formulate two auxiliary problems to obtain the closed-loop saddle point of the zero-sum game by seeking the optimal feedback control-strategy pair in a closed-loop form. Firstly, using the filtering technique, we obtain the corresponding filtering equation and the equation that the difference satisfies. Further, by means of Riccati equation approach and orthogonal decomposition of the state process and the disturbance, the soft-constrained cost functional is expressed as the sum of two parts: one part is a functional of the control $u$, the filtering state and disturbance processes (i.e., $\hat{x}$ and $\hat{v}$), which lies in the common information $\mathbb{G}$, and the other part is independent of the choice of the control and only related to the  differences of the state process and the disturbance (i.e., $\tilde{x}$ and $\tilde{v}$), which lies in the private information of the disturbance $\tilde{\mathbb{G}}:=\mathbb{F}\setminus\mathbb{G}$. Due to the two equivalent forms of the Riccati equation (see Lemma \ref{lem2}), we perform the completion-of-squares method for the two cost functionals separately, derive the feedback form of $(u^*,\hat{\alpha}_2^*(u))$ and $(\alpha_1^*(\hat{v}),\hat{v}^*)$ with respect to $\hat{x}$ and the feedback form of $\tilde{v}^*$ with respect to $\tilde{x}$ (see Lemma \ref{lem3}, Theorem \ref{Thm1}, and Theorem \ref{Thm2}). Then, by proving the equivalence of the two {\it backward ordinary differential equations} (BODEs) and introducing an algebraic equations system, we give the conditions of the existence of the optimal control-strategy pair $(u^*,v^*)$ (see Proposition \ref{equivalence} and Theorem \ref{Thm-OCS}). We then verify that the optimal control-strategy pair is the outcome of the closed-loop saddle point to the zero-sum game (see Theorem \ref{Thm-CL}), and prove that it satisfies the $H_\infty$ performance criterion (see Theorem \ref{Thm-R}). To conclude, the effectiveness of the theoretical results is confirmed via numerical simulations.

The main contributions of this paper can be summarized as follows.

(1) Firstly, we study a class of SLQ optimal control problems with Markovian regime switching and model uncertainty under partial information, where the control variable and the external unknown disturbance enter the drift term and diffusion terms of the state equation, respectively, and the information available to the control is based on a sub-$\sigma$-algebra of the filtration generated by the underlying Brownian motion and the Markov chain.

(2) Secondly, we discuss the properties of the open-loop upper value for the homogeneous system with zero initial time and zero initial state with respect to the disturbance attenuation level $\gamma$, and further derive the threshold level for the $H_\infty$ control problem from two distinct aspects to preliminarily guarantee the existence of the robust $H_\infty$ optimal control.

(3) Thirdly, by virtue of $H_\infty$ control theory, the closed-loop solvability of a soft-constrained zero-sum LQ stochastic differential game with Markov chain and partial information is investigated. We present two equivalent forms of the Riccati equation. With the help of the filtering technique, the method of orthogonal decomposition, and the completion-of-squares method, we obtain two optimal control-strategy pairs for the control and the disturbance. By proving the equivalence of the two BODEs and introducing an algebraic equations system, we obtain an optimal feedback control-strategy pair for the zero-sum game in a closed-loop form, and then derive the desired closed-loop saddle point of the zero-sum game. Subsequently, we prove that the corresponding outcome of the closed-loop saddle point satisfies the $H_\infty$-performance.

(4) Finally, to further demonstrate the effectiveness and practical applicability of the theoretical results, we solve a stock market investment problem, characterize the bear and bull market states in the stock market by a two-state Markov chain, consider the impacts of bear and bull markets in the stock market on the investment strategies of individual investor and the worst-case disturbance, and then conduct a sensitivity analysis with respect to the disturbance attenuation level.

The rest of the paper is organized as follows. In Section 2, we introduce some preliminary notations and formulate the soft-constrained zero-sum LQ stochastic differential game with Markov chain and partial information. Section 3 discusses the  disturbance attenuation parameter. The robust $H_\infty$ optimal control and the worst-case disturbance of Problem (R-SCG) are obtained in Section 4. A numerical example is given in Section 5. Section 6 concludes this paper.

\section{Preliminaries and problem formulation}

Let $T\in\left(0,\infty\right)$ be a fixed and deterministic time horizon. Let ($\Omega,\mathcal{F},\mathbb{F},\mathbb{P}$) be a complete filtered probability space on which two independent standard one-dimensional Brownian motions $W$, $\overline{W}$ and a continuous-time finite state space Markov chain $\alpha\equiv\{\alpha(t);0\leq t<\infty\}$ are defined. We assume that Brownian motion ($W,\overline{W}$) and Markov chain $\alpha$ are mutually independent. The filtration $\mathbb{F}\equiv\{\mathcal{F}_t\}_{t\geq 0}$ is generated jointly by the Brownian motion ($W,\overline{W}$) and the Markov chain $\alpha$ as follows:
\begin{equation*}
	\mathcal{F}_t:=\sigma\{W(s),\overline{W}(s);0\leq s\leq t\}\vee\sigma\{\alpha(s);0\leq s\leq t\}\vee\mathcal{N}(\mathbb{P}),\\
\end{equation*}
where $\mathcal{N}(\mathbb{P})$ denotes the collection of all $\mathbb{P}$-null sets in $\mathcal{F}$. Let $\mathbb{G}\equiv\{\mathcal{G}_t\}_{t\geq 0}$ be the natural filtration of $W$ and $\alpha$ augmented by all $\mathbb{P}$-null sets in $\mathcal{G}$.
We assume that the Markov chain $\alpha$ takes values in a finite state space $\mathcal{S}\equiv\{1,2,\dots,D\}$ with some positive integer $D$, which is also homogeneous and irreducible. The generator of $\alpha$ is a $D\times D$ matrix $\Lambda:=\left
(\lambda_{ij}\right)_{i,j=1}^D$. For each $i,j\in\mathcal{S}$, $\lambda_{ij}$ is the constant transition intensity of the chain from state $i$ to state $j$ at time $t$. Note that $\lambda_{ij}\ge0$ for $i\neq j$ and $\sum_{j=1}^{D}\lambda_{ij}=0$, so $\lambda_{ii}\leq0$. In what follows, for each $i,j\in\mathcal{S}$ with $i\neq j$, we further suppose that $\lambda_{ij}>0$, so $\lambda_{ii}<0$. Set 
\begin{equation*}
	N_{ij}(t):=\sum_{0\leq s\leq t} \mathbbm{1}_{\{\alpha(s-)=i\}}\mathbbm{1}_{\{\alpha(s)=j\}},\\
\end{equation*}
the counting process $N_{ij}(t)$ counts the number of jumps from state $i$ to state $j$ up to time $t$. The process
\begin{equation*}
	\widetilde{N}_{ij}(t):=N_{ij}(t)-\tilde{\lambda}_{ij}(t)=N_{ij}(t)-\int_{0}^{t}\lambda_{ij}\mathbbm{1}_{\{\alpha(s-)=i\}}ds
\end{equation*}
is a purely discontinuous, square-integrable martingale (compensated measure). 

Throughout the paper, let $\mathbb{R}^n$ denote the \emph{n}-dimensional Euclidean space with standard Euclidean norm $\vert\cdot\vert $ and standard Euclidean inner product $\left\langle\cdot,\cdot\right\rangle $. The transpose of a vector (or matrix) $x$ is denoted by $\mathbf{\emph{x}}^\top$. $\mbox{Tr}(A)$ denotes the trace of a square matrix $A$. Let $\mathbb{R}^{n\times m}$ be the Hilbert space consisting of all $n\times m$-matrices with the inner product $\left\langle A,B\right\rangle := \mbox{Tr}(AB^\top$) and the Frobenius norm $\vert A \vert:=\langle A,A\rangle^\frac{1}{2}$. Denote the set of symmetric $n\times n$ matrices with real elements by $\mathbb{S}^n$. If $M\in\mathbb{S}^n$ is positive (semi-) definite, we write $M > (\geq) 0$. If there exists a constant $\delta>0$ such that $M\geq\delta I$, we write $M\gg0$. 
For a given Hilbert space $\mathbb{H}$, if $\xi$: $\Omega\to\mathbb{H}$ is an $\mathcal{F}_T$-measurable, square-integrable random variable, we denote $\xi\in L_{\mathcal{F}_T}^2(\Omega;\mathbb{H})$; 
if $\phi$: $[t,T]\times\Omega\to\mathbb{H}$ is $\mathbb{F}$-progressively measurable s.t. $\mathbb{E}\int_t^T |\phi(s)|^2 ds < \infty$, we denote $\phi\in L_{\mathbb{F}}^2(t,T;\mathbb{H})$; 
if $\phi$: $[t,T]\times\Omega\to\mathbb{H}$ is $\mathbb{F}$-adapted, continuous s.t. $\mathbb{E}\big[\sup_{s\in[t,T]}|\phi(s)|^2\big]  < \infty$, we denote $\phi\in L_{\mathbb{F}}^2(\Omega;C([t,T];\mathbb{H}))$.

For $t\in[0,T)$, we consider the following controlled Markovian regime switching linear \emph{stochastic differential equation} (SDE) on the finite horizon $[t,T]$:
\begin{equation}\label{state eq}
\left\{\begin{aligned}
	dx(s)=&\big[A(s,\alpha(s))x(s)+B_1(s,\alpha(s))u(s)+B_2(s,\alpha(s))v(s)+b(s,\alpha(s))\big]ds\\
	&+\big[C(s,\alpha(s))x(s)+D_1(s,\alpha(s))u(s)+D_2(s,\alpha(s))v(s)+\sigma(s,\alpha(s))\big]dW(s)\\
	&+\big[\bar{C}(s,\alpha(s))x(s)+\bar{D}_1(s,\alpha(s))u(s)+\bar{D}_2(s,\alpha(s))v(s)+\bar{\sigma}(s,\alpha(s))\big]d\overline{W}(s),\\
	x(t)=&\ \xi,\quad\alpha(t)=i,\\
\end{aligned}\right.
\end{equation}
where $\xi\in L_{\mathcal{F}_t}^2(\Omega;\mathbb{R}^n)$ and the coefficients $A(\cdot,j)$, $B_1(\cdot,j)$, $B_2(\cdot,j)$, $C(\cdot,j)$, $D_1(\cdot,j)$, $D_2(\cdot,j)$, $\bar{C}(\cdot,j)$, $\bar{D}_1(\cdot,j)$, $\bar{D}_2(\cdot,j)$ are given deterministic matrix-valued functions of proper dimensions, and $b(\cdot,j)$, $\sigma(\cdot,j)$, $\bar{\sigma}(\cdot,j)$ are deterministic $\mathbb{R}^n$-valued functions, $j\in\mathcal{S}$. Moreover, $x(\cdot)\in\mathbb{R}^n$ is the state process, $u(\cdot)\in\mathbb{R}^m$ is the control process, and $v(\cdot)\in\mathbb{R}^{n_v}$ is the external unknown disturbance to characterize the model uncertainty and represents the influence of the external environment on the decision-maker.

Moreover, we consider the cost functional:
\begin{equation}\label{cost}
\begin{aligned}
	&J(t,\xi,i;u(\cdot),v(\cdot))=\mathbb{E}\bigg\{\langle G(T,\alpha(T))x(T),x(T)\rangle+2\langle g(T,\alpha(T)),x(T)\rangle\\
	&\quad+\int_t^T\Big[  \langle Q(s,\alpha(s))x(s),x(s)\rangle+\langle R_1(s,\alpha(s))u(s),u(s)\rangle+\langle R_2(s,\alpha(s))v(s),v(s)\rangle\\
	&\qquad\qquad+2\langle S_1(s,\alpha(s))x(s),u(s)\rangle+2\langle S_2(s,\alpha(s))x(s),v(s)\rangle+2\langle q(s,\alpha(s)),x(s)\rangle \\
	&\qquad\qquad+2\langle \rho_1(s,\alpha(s)),u(s)\rangle+2\langle \rho_2(s,\alpha(s)),v(s)\rangle \Big]ds\bigg\},
\end{aligned}
\end{equation}
where $G(T,j)\in\mathbb{S}^n$, $g(T,j)\in\mathbb{R}^n$, and the coefficients $Q(\cdot,j)$, $R_1(\cdot,j)$, $R_2(\cdot,j)$, $S_1(\cdot,j)$, $S_2(\cdot,j)$ are given deterministic matrix-valued functions of proper dimensions, and $q(\cdot,j)$, $\rho_1(\cdot,j)$, $\rho_2(\cdot,j)$ are deterministic vector-valued functions of proper dimensions, $j\in\mathcal{S}$.

According to $H_\infty$ control theory, we introduce the parameterized (in the disturbance attenuation level $\gamma>0$) family of cost functionals:
\begin{equation}\label{cost1}
\hspace{-3mm}\begin{aligned}
	&J_\gamma(t,\xi,i;u(\cdot),v(\cdot))=\mathbb{E}\bigg\{\langle G(T,\alpha(T))x(T),x(T)\rangle+2\langle g(T,\alpha(T)),x(T)\rangle\\
	&\quad+\int_t^T\Big[  \langle Q(s,\alpha(s))x(s),x(s)\rangle+\langle R_1(s,\alpha(s))u(s),u(s)\rangle+\big\langle (R_2(s,\alpha(s))-\gamma^2I)v(s),v(s)\big\rangle\\
	&\qquad\qquad+2\langle S_1(s,\alpha(s))x(s),u(s)\rangle+2\langle S_2(s,\alpha(s))x(s),v(s)\rangle+2\langle q(s,\alpha(s)),x(s)\rangle \\
	&\qquad\qquad+2\langle \rho_1(s,\alpha(s)),u(s)\rangle+2\langle \rho_2(s,\alpha(s)),v(s)\rangle \Big]ds\bigg\},\\
\end{aligned}
\end{equation}
which is a so-called parameterized {\it soft-constrained} cost functional associated with $H_\infty$ optimal control problem (see \cite{Basar95}), with the scalar parameter $\gamma$ standing for ``level of disturbance attenuation".  Note that the functional $J$ of (\ref{cost}) is in fact $J_\gamma$ of (\ref{cost1}) evaluated at $\gamma=0$. For notational simplicity, we denote
\begin{equation*}
\begin{aligned}
	&B(\cdot,\cdot):=\begin{pmatrix}
		B_1(\cdot,\cdot) & B_2(\cdot,\cdot)
	\end{pmatrix},\quad D(\cdot,\cdot):=\begin{pmatrix}
	D_1(\cdot,\cdot) & D_2(\cdot,\cdot)
	\end{pmatrix},\quad \bar{D}(\cdot,\cdot):=\begin{pmatrix}
	\bar{D}_1(\cdot,\cdot) & \bar{D}_2(\cdot,\cdot)
	\end{pmatrix},\\
	&R(\cdot,\cdot):=\begin{pmatrix}
		R_1(\cdot,\cdot) & 0\\
		0 & R_2(\cdot,\cdot)
	\end{pmatrix},\quad R_\gamma(\cdot,\cdot):=\begin{pmatrix}
		R_1(\cdot,\cdot) & 0\\
		0 & R_2(\cdot,\cdot)-\gamma^2I
	\end{pmatrix},\\
	&S(\cdot,\cdot):=\begin{pmatrix}
		S_1(\cdot,\cdot)\\
		S_2(\cdot,\cdot)
	\end{pmatrix},\quad\rho(\cdot,\cdot):=\begin{pmatrix}
		\rho_1(\cdot,\cdot)\\
		\rho_2(\cdot,\cdot)
	\end{pmatrix}.
\end{aligned}
\end{equation*}

With the above notations, some assumptions will be in force throughout this paper.

\textbf{(H1)} The coefficients of the state equation satisfy the following: for each $j\in\mathcal{S}$,
\begin{equation*}
\left\{\begin{aligned}
	&A(\cdot,j)\in L^1(0,T;\mathbb{R}^{n\times n}),\quad B(\cdot,j)\in L^2(0,T;\mathbb{R}^{n\times (m+n_v)}),\quad b(\cdot,j)\in L^1(0,T;\mathbb{R}^{n}),\\
	&C(\cdot,j)\in L^2(0,T;\mathbb{R}^{n\times n}),\quad D(\cdot,j)\in L^\infty(0,T;\mathbb{R}^{n\times (m+n_v)}),\quad \sigma(\cdot,j)\in L^2(0,T;\mathbb{R}^{n}),\\
	&\bar{C}(\cdot,j)\in L^2(0,T;\mathbb{R}^{n\times n}),\quad \bar{D}(\cdot,j)\in L^\infty(0,T;\mathbb{R}^{n\times (m+n_v)}),\quad \bar{\sigma}(\cdot,j)\in L^2(0,T;\mathbb{R}^{n}).\\                     
\end{aligned}\right.
\end{equation*} 

\textbf{(H2)} The weighting coefficients in the cost functional satisfy the following: for each $j\in\mathcal{S}$,
\begin{equation*}
\left\{\begin{aligned}
	&Q(\cdot,j)\in L^1(0,T;\mathbb{S}^n),\quad S(\cdot,j)\in L^2(0,T;\mathbb{R}^{(m+n_v)\times n}),\quad R(\cdot,j)\in L^\infty(0,T;\mathbb{S}^{m+n_v}),\\
	&q(\cdot,j)\in L^1(0,T;\mathbb{R}^n),\quad \rho(\cdot,j)\in L^2(0,T;\mathbb{R}^{m+n_v}),\quad G(T,j)\in\mathbb{S}^n, \quad g(T,j)\in\mathbb{R}^n.
\end{aligned}\right.
\end{equation*}

\textbf{(H3)} The following standard conditions hold:
\begin{equation*}
	G(T,j)\geq0,\quad R(s,j)\gg0,\quad Q(s,j)-S(s,j)^\top R(s,j)^{-1}S(s,j)\geq0,\quad j\in\mathcal{S},\quad a.e. \;s\in[0,T].
\end{equation*}

Under (H1), for any initial pair $(t,\xi,i)\in[0,T)\times L_{\mathcal{F}_t}^2(\Omega;\mathbb{R}^n)\times\mathcal{S}$, $u(\cdot)\in L_{\mathbb{G}}^2(t,T;\mathbb{R}^{m})$ and $v(\cdot)\in L_{\mathbb{F}}^2(t,T;\mathbb{R}^{n_v})$, the state equation (\ref{state eq}) admits a unique strong solution (Mao \cite{Mao07})
\begin{equation*}
	x(\cdot)\equiv x(\cdot;t,\xi,i,u(\cdot),v(\cdot))\in L_{\mathbb{F}}^2(\Omega;C([t,T];\mathbb{R}^n)).
\end{equation*} 
$u(\cdot)\in L_{\mathbb{G}}^2(t,T;\mathbb{R}^{m})$ is called an admissible control and $x(\cdot)$ is called the corresponding admissible state process.
Therefore, under (H1)-(H2), the quadratic performance functionals $J(t,\xi,i;u,v)$ and  $J_\gamma(t,\xi,i;u,v)$ are well-defined for all $(t,\xi,i)\in[0,T)\times L_{\mathcal{F}_t}^2(\Omega;\mathbb{R}^n)\times\mathcal{S}$ and $(u(\cdot),v(\cdot))\in L_{\mathbb{G}}^2(t,T;\mathbb{R}^{m})\times L_{\mathbb{F}}^2(t,T;\mathbb{R}^{n_v})$. If $b(\cdot,\cdot)=\sigma(\cdot,\cdot)=\bar{\sigma}(\cdot,\cdot)=q(\cdot,\cdot)=\rho_1(\cdot,\cdot)=\rho_2(\cdot,\cdot)=g(T,\cdot)=0$, the solution of system (\ref{state eq}) is denoted by $x^0(\cdot)$ and the corresponding cost functionals are denoted by $J^0(t,\xi,i;u(\cdot),v(\cdot))$ and $J_\gamma^0(t,\xi,i;u(\cdot),v(\cdot))$. 

For any $t\in[0,T)$, we define the closed-loop admissible strategy sets of $u(\cdot)$ and $v(\cdot)$:
\begin{equation*}
\begin{aligned}
	\mathcal{U}[t,T]&=\Big\{u:[t,T]\times\Omega\to\mathbb{R}^m\Big|u(s)\in\sigma\left\{\hat{x}(r),W(r),\alpha(r);r\leq s\right\}, \mathbb{E}\int_t^T |u(s)|^2 ds < \infty\Big\},\\
	\mathcal{V}[t,T]&=\Big\{v:[t,T]\times\Omega\to\mathbb{R}^{n_v}\Big|v(s)\in\sigma\left\{x(r),W(r),\overline{W}(r),\alpha(r);r\leq s\right\}, \mathbb{E}\int_t^T |v(s)|^2 ds < \infty\Big\},\\
\end{aligned}
\end{equation*}
where $\hat{x}(r)$ is the optimal filtering estimate of $x(r)$ with respect to $\mathcal{G}_r$ in sense of Xiong \cite{Xiong08}, and the equation it satisfies is given in Section 4.

Due to the existence of the external unknown disturbance $v(\cdot)$ in the system (\ref{state eq}), the decision-maker is required to take into account the $H_\infty$ performance to guarantee robust stability in this case, that is the $H_\infty$ norm is less than a fixed level $\gamma$. This paper will consider the synthesis of a closed-loop control strategy for a finite-horizon stochastic $H_\infty$ 
control problem with Markovian regime switching under partial information. We next give the definition about the finite-horizon stochastic robust $H_\infty$ closed-loop control with Markov chain and partial information.

\begin{mydef}[finite-horizon stochastic $H_\infty$  closed-loop optimal control]
Given a disturbance attenuation level $\gamma>0$, find an $H_\infty$ state feedback control strategy $u^*(\cdot)\equiv u^*(\cdot;\hat{x}(\cdot),\alpha(\cdot))\in\mathcal{U}[t,T]$ such that the homogeneous closed-loop system corresponding to the control $u^*(\cdot)$ satisfies a prescribed $H_\infty$-performance
\begin{equation}\label{Hinfty-p}
	\Vert\mathcal{L}_{u^*}\Vert:=\underset{\substack{v\neq0\\v(\cdot)\in L_{\mathbb{F}}^2(t,T;\mathbb{R}^{n_v}) }}\sup\frac{J^0(0,0,i;u^*(\cdot),v(\cdot))^{\frac{1}{2}}}{\big(\mathbb{E}\int_{0}^{T}\vert v(s)\vert^2ds\big)^{\frac{1}{2}}} < \gamma.
\end{equation}
\end{mydef}

\begin{Remark}
(\romannumeral1) If such a $u^*(\cdot)\in\mathcal{U}[t,T]$ exists, the finite-horizon stochastic $H_\infty$ control problem with Markovian regime switching under partial information is said to admit a solution, and the corresponding $u^*(\cdot)$ is referred to as the robust $H_\infty$ (closed-loop optimal) control.

(\romannumeral2) Under (H3), for any initial pair and $u(\cdot)\in L_{\mathbb{G}}^2(t,T;\mathbb{R}^{m})$ and $v(\cdot)\in L_{\mathbb{F}}^2(t,T;\mathbb{R}^{n_v})$, the cost functional $J^0(t,\xi,i;u(\cdot),v(\cdot))$ is non-negative, which can be viewed as a norm on the  output of the system. The effect of the disturbance on the cost functional of system (\ref{state eq})-(\ref{cost}) is described by the perturbation operator $\mathcal{L}_u$, which (for zero initial time and state) maps finite energy disturbance signals $v(\cdot)$ into the corresponding finite energy output signals $J^0(0,0,i;u(\cdot),v(\cdot))$ of the homogeneous closed-loop system. The magnitude of this linear operator is measured by the induced norm ($H_\infty$ norm). The larger that this norm is, the larger is the effect of the unknown disturbance $v(\cdot)$ on the cost functional $J^0(0,0,i;u(\cdot),v(\cdot))$ in the worst case. 
\end{Remark}

Since the $H_\infty$ control problem can be solved by reformulating it into the corresponding zero-sum game model, we consider the associated soft-constrained zero-sum LQ stochastic differential game with Markov chain and partial information, whose quadratic cost functional is given by $J_\gamma(t,\xi,i;u(\cdot),v(\cdot))$ and denote it as {\bf Problem (SCG)}. In this game, the decision-maker $u(\cdot)\in L_{\mathbb{G}}^2(t,T;\mathbb{R}^m)$, which acts as the minimizing player (henceforth called Player 1), can only access partial information, while the disturbance $v(\cdot)\in L_{\mathbb{F}}^2(t,T;\mathbb{R}^{n_v})$ is  the maximizing player (called Player 2). When the non-homogeneous terms in (\ref{state eq}) and (\ref{cost1}) are zero, we denote it as {\bf Problem (SCG)$^0$}. We now introduce the following definition of the closed-loop saddle point for Problem (SCG).

\begin{mydef}\label{Def-CLSP}
For any given $t\in[0,T)$, let $\hat{\Theta}(\cdot,\cdot), \tilde{\Theta}(\cdot,\cdot):[t,T]\times\mathcal{S}\to\mathbb{R}^{(m+n_v)\times n}$ be the deterministic functions and $\bar{v}(\cdot):[t,T]\times\Omega\to\mathbb{R}^{m+n_v}$ be an $\mathbb{F}$-progressively measurable process with $\hat{\Theta}(\cdot,\alpha(\cdot))\equiv(\hat{\Theta}_1(\cdot,\alpha(\cdot))^\top,\hat{\Theta}_2(\cdot,\alpha(\cdot))^\top)^\top$, $\tilde{\Theta}(\cdot,\alpha(\cdot))\equiv(0,\tilde{\Theta}_2(\cdot,\alpha(\cdot))^\top)^\top$and $\bar{v}(\cdot)\equiv(v_1(\cdot)^\top,v_2(\cdot)^\top)^\top$, satisfying
\begin{equation*}
	\mathbb{E}\int_{t}^{T}\vert\hat{\Theta}(s,\alpha(s))\vert^2ds<\infty,\quad \mathbb{E}\int_{t}^{T}\vert\tilde{\Theta}(s,\alpha(s))\vert^2ds<\infty,
\end{equation*}
with $v_1(\cdot)\in L_{\mathbb{G}}^2(t,T;\mathbb{R}^m)$, and $v_2(\cdot)\in L_{\mathbb{F}}^2(t,T;\mathbb{R}^{n_v})$. The set of all closed-loop strategy pairs on  $[t,T]$ is denoted by $\mathcal{Q}[t,T]$. A 3-tuple $(\hat{\Theta}^*(\cdot,\alpha(\cdot)),\tilde{\Theta}^*(\cdot,\alpha(\cdot)),\bar{v}^*(\cdot))\in\mathcal{Q}[t,T]$ is called a closed-loop saddle point of Problem (SCG) on $[t,T]$ if
\begin{equation}\label{ineq-CL}
\begin{aligned}
	&J_\gamma(t,\xi,i;\hat{\Theta}_1^*(\cdot,\alpha(\cdot))\hat{x}(\cdot)+v_1^*(\cdot),v(\cdot))\\
	&\leq J_\gamma(t,\xi,i;\hat{\Theta}_1^*(\cdot,\alpha(\cdot))\hat{x}^*(\cdot)+v_1^*(\cdot),\hat{\Theta}_2^*(\cdot,\alpha(\cdot))\hat{x}^*(\cdot)+\tilde{\Theta}_2^*(\cdot,\alpha(\cdot))\tilde{x}^*(\cdot)+v_2^*(\cdot))\\
	&\leq J_\gamma(t,\xi,i;u(\cdot),\hat{\Theta}_2^*(\cdot,\alpha(\cdot))\hat{x}(\cdot)+\tilde{\Theta}_2^*(\cdot,\alpha(\cdot))\tilde{x}(\cdot)+v_2^*(\cdot)),\\
	&\qquad\qquad\forall (\xi,i)\in L_{\mathcal{F}_t}^2(\Omega;\mathbb{R}^n)\times\mathcal{S},\;(u(\cdot),v(\cdot))\in L_{\mathbb{G}}^2(t,T;\mathbb{R}^{m})\times L_{\mathbb{F}}^2(t,T;\mathbb{R}^{n_v}),\\
\end{aligned}
\end{equation}
where $\hat{x}^*(\cdot)$ and $\tilde{x}^*(\cdot)$ denote, respectively, the filtering state process and the corresponding difference of the closed-loop system under $(\hat{\Theta}^*(\cdot,\alpha(\cdot)),\tilde{\Theta}^*(\cdot,\alpha(\cdot)),\bar{v}^*(\cdot))$.
\end{mydef}

\begin{Remark}\label{Remark1}
Using the similar method of Propositin 3.3 in Sun and Yong \cite{Sun14}, we can prove that the inequality (\ref{ineq-CL}) in Definition \ref{Def-CLSP} is equivalent to the following inequality:
\begin{equation}\label{ineq-CL'}
\begin{aligned}
	&J_\gamma(t,\xi,i;\hat{\Theta}_1^*(\cdot,\alpha(\cdot))\hat{x}(\cdot)+v_1^*(\cdot),\hat{\Theta}_2^*(\cdot,\alpha(\cdot))\hat{x}(\cdot)+\tilde{\Theta}_2^*(\cdot,\alpha(\cdot))\tilde{x}(\cdot)+v_2(\cdot))\\
	&\leq J_\gamma(t,\xi,i;\hat{\Theta}_1^*(\cdot,\alpha(\cdot))\hat{x}^*(\cdot)+v_1^*(\cdot),\hat{\Theta}_2^*(\cdot,\alpha(\cdot))\hat{x}^*(\cdot)+\tilde{\Theta}_2^*(\cdot,\alpha(\cdot))\tilde{x}^*(\cdot)+v_2^*(\cdot))\\
	&\leq J_\gamma(t,\xi,i;\hat{\Theta}_1^*(\cdot,\alpha(\cdot))\hat{x}(\cdot)+v_1(\cdot),\hat{\Theta}_2^*(\cdot,\alpha(\cdot))\hat{x}(\cdot)+\tilde{\Theta}_2^*(\cdot,\alpha(\cdot))\tilde{x}(\cdot)+v_2^*(\cdot)),\\
	&\qquad\qquad\forall (\xi,i)\in L_{\mathcal{F}_t}^2(\Omega;\mathbb{R}^n)\times\mathcal{S},\;(v_1(\cdot),v_2(\cdot))\in L_{\mathbb{G}}^2(t,T;\mathbb{R}^{m})\times L_{\mathbb{F}}^2(t,T;\mathbb{R}^{n_v}),\\
\end{aligned}
\end{equation}
which will be used in subsequent discussions.
\end{Remark}

If the closed-loop saddle point of Problem (SCG) satisfies the $H_\infty$-performance specified by (\ref{Hinfty-p}), then the corresponding outcome of the closed-loop saddle point is the desired robust $H_\infty$ optimal control and the worst-case disturbance, which we state as {\bf Problem (R-SCG)} below, where the letter ``R" represents $H_\infty$ robustness.

\textbf{Problem (R-SCG).} For any given $t\in[0,T)$ and the disturbance attenuation level $\gamma>0$, find a closed-loop saddle point $(\hat{\Theta}^*(\cdot,\alpha(\cdot)),\tilde{\Theta}^*(\cdot,\alpha(\cdot)),\bar{v}^*(\cdot))\in\mathcal{Q}[t,T]$ of Problem (SCG), and the corresponding outcome of the closed-loop saddle point  $u^*(\cdot):=\hat{\Theta}_1^*(\cdot,\alpha(\cdot))\hat{x}(\cdot)+v_1^*(\cdot)\in\mathcal{U}[t,T]$ ensures that the $H_\infty$-performance holds, i.e., $\Vert \mathcal{L}_{u^*}\Vert<\gamma$. 

If the closed-loop saddle point $(\hat{\Theta}^*(\cdot,\alpha(\cdot)),\tilde{\Theta}^*(\cdot,\alpha(\cdot)),\bar{v}^*(\cdot))\in\mathcal{Q}[t,T]$ mentioned above exists, and the corresponding outcome $(u^*(\cdot),v^*(\cdot))$ is given by
\begin{equation}\label{outcome}
\begin{aligned}
	u^*(\cdot)&:=\hat{\Theta}_1^*(\cdot,\alpha(\cdot))\hat{x}(\cdot)+v_1^*(\cdot),\\
	v^*(\cdot)&:=\hat{\Theta}_2^*(\cdot,\alpha(\cdot))\hat{x}(\cdot)+\tilde{\Theta}_2^*(\cdot,\alpha(\cdot))\tilde{x}(\cdot)+v_2^*(\cdot),\\
\end{aligned}
\end{equation}
then the finite-horizon stochastic $H_\infty$ control of Markovian regime switching system with partial information is said to admit a pair of solution $(u^*(\cdot),v^*(\cdot))$, where $u^*(\cdot)\in\mathcal{U}[t,T]$ is robust $H_\infty$ closed-loop optimal control for the system and $v^*(\cdot)\in\mathcal{V}[t,T]$ is the corresponding worst-case disturbance.

\section{The disturbance attenuation parameter}

Since the $H_\infty$ optimal control depends on the disturbance attenuation level $\gamma$, the smaller this level is, the lower the influence of the external disturbance on the system output that the decision-maker can tolerate. However, the selection of the disturbance attenuation level is not arbitrary. If the pre-specified disturbance attenuation level is too small, the corresponding $H_\infty$ optimal control may not exist. Therefore, in this section, we aim to determine a lower bound for the disturbance attenuation parameter, which also serves as the foundation for all subsequent derivations and discussions.

For any disturbance attenuation level $\gamma>0$ and initial pair $(t,\xi,i)\in[0,T)\times L_{\mathcal{F}_t}^2(\Omega;\mathbb{R}^n)\times\mathcal{S}$, the open-loop upper value $\overline{V}_\gamma^0(t,\xi,i)$ and the open-loop lower value $\underline{V}_\gamma^0(t,\xi,i)$ of Problem (SCG)$^0$ are determined by 
\begin{equation*}
\begin{aligned}
	\overline{V}_\gamma^0(t,\xi,i)&:=\underset{u(\cdot)\in L_{\mathbb{G}}^2(t,T;\mathbb{R}^{m})}\inf\underset{v(\cdot)\in L_{\mathbb{F}}^2(t,T;\mathbb{R}^{n_v})}\sup J_\gamma^0(t,\xi,i;u(\cdot),v(\cdot)),\\
	\underline{V}_\gamma^0(t,\xi,i)&:=\underset{v(\cdot)\in L_{\mathbb{F}}^2(t,T;\mathbb{R}^{n_v})}\sup\underset{u(\cdot)\in L_{\mathbb{G}}^2(t,T;\mathbb{R}^{m})}\inf J_\gamma^0(t,\xi,i;u(\cdot),v(\cdot)),\\
\end{aligned}
\end{equation*}  
which automatically satisfying the following:
\begin{equation*}
	\underline{V}_\gamma^0(t,\xi,i)\leq \overline{V}_\gamma^0(t,\xi,i),\qquad \forall(t,\xi,i)\in [0,T)\times L_{\mathcal{F}_t}^2(\Omega;\mathbb{R}^n)\times\mathcal{S}.
\end{equation*}
In the case that
\begin{equation*}
	\underline{V}_\gamma^0(t,\xi,i)=\overline{V}_\gamma^0(t,\xi,i)=V_\gamma^0(t,\xi,i),
\end{equation*}
we say that Problem (SCG)$^0$ admits an open-loop value $V_\gamma^0(t,\xi,i)$ at $(t,\xi,i)$ with respect to $\gamma>0$. The maps $(t,\xi,i)\mapsto\underline{V}_\gamma^0(t,\xi,i)$, $(t,\xi,i)\mapsto\overline{V}_\gamma^0(t,\xi,i)$, and $(t,\xi,i)\mapsto V_\gamma^0(t,\xi,i)$ are called the open-loop upper value function, open-loop lower value function, and open-loop value function with respect to $\gamma$, respectively.

It should be noted that the open-loop upper value  $\overline{V}_\gamma^0(0,0,i)$ is bounded  below by 0, which can be ensured for Player 2 by choosing $v(\cdot)$ to be zero. If $H_\infty$-performance holds, then we have $\overline{V}_\gamma^0(0,0,i)\leq0$. Hence, the disturbance attenuation level $\gamma$ must satisfy $\overline{V}_\gamma^0(0,0,i)=0$. In view of this, let us introduce a set $\bar{\Gamma}$:
\begin{equation}\label{bar_Gamma}
	\bar{\Gamma}:=\{\gamma>0,\;\forall\gamma'>\gamma,\;\overline{V}_{\gamma'}^0(0,0,i)=0\},
\end{equation}
and a threshold $\bar{\gamma}$:
\begin{equation}\label{bar_gamma}
	\bar{\gamma}:=\inf\bar{\Gamma}.
\end{equation}

According to the definition of the above open-loop upper value, we can draw the following  result, which indicates that the open-loop upper value $\overline{V}_\gamma^0(0,0,i)$ at $(0,0,i)$ is monotonically decreasing with respect to the parameter $\gamma$.

\begin{mypro}\label{prop-uv}
Let (H1)-(H3) hold, if $0<\gamma_1\leq\gamma_2$, then $\overline{V}_{\gamma_2}^0(0,0,i)\leq\overline{V}_{\gamma_1}^0(0,0,i)$.
\end{mypro}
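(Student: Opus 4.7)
The proof plan is quite short because the parameter $\gamma$ enters only through the cost functional and not through the state equation. First I would observe that for any fixed admissible pair $(u(\cdot),v(\cdot))\in L_{\mathbb{G}}^2(0,T;\mathbb{R}^m)\times L_{\mathbb{F}}^2(0,T;\mathbb{R}^{n_v})$, the associated homogeneous state process $x^0(\cdot;0,0,i,u,v)$ solves \eqref{state eq} with $b=\sigma=\bar\sigma=0$ and $\xi=0$, so it is completely independent of $\gamma$. Consequently, inspecting \eqref{cost1}, the two functionals $J_{\gamma_1}^0(0,0,i;u,v)$ and $J_{\gamma_2}^0(0,0,i;u,v)$ agree term by term except in the quadratic disturbance weight, and their difference reduces to
\begin{equation*}
J_{\gamma_2}^0(0,0,i;u,v)-J_{\gamma_1}^0(0,0,i;u,v)=-(\gamma_2^2-\gamma_1^2)\,\mathbb{E}\int_{0}^{T}|v(s)|^2\,ds\leq 0,
\end{equation*}
using $0<\gamma_1\leq\gamma_2$. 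Hence $J_{\gamma_2}^0(0,0,i;u,v)\leq J_{\gamma_1}^0(0,0,i;u,v)$ pointwise in $(u,v)$.

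Next I would promote this pointwise inequality through the sup-inf operations defining $\overline{V}_\gamma^0$. Fix $u(\cdot)\in L_{\mathbb{G}}^2(0,T;\mathbb{R}^m)$. For any $v(\cdot)\in L_{\mathbb{F}}^2(0,T;\mathbb{R}^{n_v})$,
\begin{equation*}
J_{\gamma_2}^0(0,0,i;u,v)\leq J_{\gamma_1}^0(0,0,i;u,v)\leq \sup_{v'(\cdot)\in L_{\mathbb{F}}^2(0,T;\mathbb{R}^{n_v})} J_{\gamma_1}^0(0,0,i;u,v'),
\end{equation*}
so taking the supremum of the left-hand side over $v(\cdot)$ preserves the inequality. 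Then taking the infimum over $u(\cdot)\in L_{\mathbb{G}}^2(0,T;\mathbb{R}^m)$ on both sides yields
\begin{equation*}
\overline{V}_{\gamma_2}^0(0,0,i)=\inf_{u}\sup_{v}J_{\gamma_2}^0(0,0,i;u,v)\leq \inf_{u}\sup_{v}J_{\gamma_1}^0(0,0,i;u,v)=\overline{V}_{\gamma_1}^0(0,0,i),
\end{equation*}
which is the desired monotonicity.

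There is no serious obstacle here. The only point worth being careful about is confirming that the state trajectory is unchanged when $\gamma$ varies, so the comparison really does depend on a single quadratic term in the integrand. Admissibility of $(u,v)$ is also independent of $\gamma$ since the admissible sets $L_{\mathbb{G}}^2(0,T;\mathbb{R}^m)$ and $L_{\mathbb{F}}^2(0,T;\mathbb{R}^{n_v})$ do not reference $\gamma$. Everything else is the standard monotonicity of suprema and infima under pointwise domination of functions.
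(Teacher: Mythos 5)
Your proposal is correct and follows essentially the same route as the paper: it exploits that the state trajectory and the admissible sets do not depend on $\gamma$, so the two cost functionals differ only by the nonpositive term $-(\gamma_2^2-\gamma_1^2)\,\mathbb{E}\int_0^T|v(s)|^2\,ds$, and then passes this pointwise domination through $\sup_v$ and $\inf_u$. If anything, your write-up is slightly more careful than the paper's in making the final $\inf_u$ step explicit.
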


\begin{proof}
Since $0<\gamma_1\leq\gamma_2$, for any $u(\cdot)\in L_{\mathbb{G}}^2(t,T;\mathbb{R}^m)$ and $v(\cdot)\in L_{\mathbb{F}}^2(t,T;\mathbb{R}^{n_v})$, we have 
\begin{equation*}
\begin{aligned}
	J_{\gamma_2}^0(0,0,i;u(\cdot),v(\cdot))&=J^0(0,0,i;u(\cdot),v(\cdot))-\gamma_2^2\mathbb{E}\int_0^T\vert v(s)\vert^2ds\\
	&\leq J^0(0,0,i;u(\cdot),v(\cdot))-\gamma_1^2\mathbb{E}\int_{0}^{T}\vert v(s)\vert^2ds=J_{\gamma_1}^0(0,0,i;u(\cdot),v(\cdot)).
\end{aligned}
\end{equation*}
By taking the supremum on both sides of the inequality, it follows that
\begin{equation*}
	\overline{V}_{\gamma_2}^0(0,0,i)\leq\underset{v(\cdot)\in L_{\mathbb{F}}^2(t,T;\mathbb{R}^{n_v})}\sup J_{\gamma_2}^0(0,0,i;u(\cdot),v(\cdot))\leq\underset{v(\cdot)\in L_{\mathbb{F}}^2(t,T;\mathbb{R}^{n_v})}\sup J_{\gamma_1}^0(0,0,i;u(\cdot),v(\cdot)).
\end{equation*}
Therefore,
\begin{equation*}
	\overline{V}_{\gamma_2}^0(0,0,i)\leq\overline{V}_{\gamma_1}^0(0,0,i).\\
\end{equation*}
We complete the proof.
\end{proof}

\begin{mycor}\label{cor1}
Let (H1)-(H3) hold, if $\gamma>0$ satisfies $\overline{V}_{\gamma}^0(0,0,i)=0$, then $\gamma\geq\bar{\gamma}$.
\end{mycor}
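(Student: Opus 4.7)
The plan is to show that the hypothesis forces $\gamma$ to belong to the set $\bar{\Gamma}$ defined in \eqref{bar_Gamma}, whence $\gamma\geq\inf\bar{\Gamma}=\bar{\gamma}$ follows by the very definition of the infimum. So the only real content is checking that $\overline{V}_{\gamma'}^0(0,0,i)=0$ for every $\gamma'>\gamma$.

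First, I would fix an arbitrary $\gamma'>\gamma$ and sandwich $\overline{V}_{\gamma'}^0(0,0,i)$ from above and below. For the upper bound, Proposition \ref{prop-uv} applied to the pair $\gamma_1=\gamma\leq\gamma_2=\gamma'$ yields
\begin{equation*}
\overline{V}_{\gamma'}^0(0,0,i)\leq\overline{V}_{\gamma}^0(0,0,i)=0,
\end{equation*}
where the equality uses the hypothesis. For the lower bound, I would invoke the observation recorded just before \eqref{bar_Gamma}: testing Player 2's strategy with $v\equiv0$ in the definition of $\overline{V}_{\gamma'}^0(0,0,i)$ reduces the soft-constrained cost to $J^0(0,0,i;u,0)$, which under (H3) (i.e., $G(T,j)\geq 0$, $R_1\gg 0$, and $Q-S^\top R^{-1}S\geq 0$) is a non-negative quadratic form in $u$ with zero initial state. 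Hence $\sup_v J_{\gamma'}^0(0,0,i;u,v)\geq 0$ for every admissible $u$, and taking the infimum over $u\in L^2_{\mathbb{G}}(0,T;\mathbb{R}^m)$ gives $\overline{V}_{\gamma'}^0(0,0,i)\geq 0$.

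Combining the two bounds forces $\overline{V}_{\gamma'}^0(0,0,i)=0$ for every $\gamma'>\gamma$, which is precisely the membership condition for $\gamma\in\bar{\Gamma}$ stated in \eqref{bar_Gamma}. Therefore $\gamma\geq\inf\bar{\Gamma}=\bar{\gamma}$ by \eqref{bar_gamma}, completing the argument. There is no genuine obstacle here; the proof is a two-line combination of Proposition \ref{prop-uv} with the non-negativity $\overline{V}_\gamma^0(0,0,i)\geq 0$ already highlighted in the paper, and the only care I would take is to verify non-negativity using (H3) rather than re-deriving it, since that fact is the conceptual anchor of the whole threshold construction.
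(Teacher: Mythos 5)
Your proposal is correct and follows essentially the same route as the paper: apply Proposition \ref{prop-uv} to get $\overline{V}_{\gamma'}^0(0,0,i)\leq\overline{V}_{\gamma}^0(0,0,i)=0$ for every $\gamma'>\gamma$, combine with the non-negativity $\overline{V}_{\gamma'}^0(0,0,i)\geq0$ (obtained by testing $v\equiv0$, as noted just before \eqref{bar_Gamma}), and conclude $\gamma\in\bar{\Gamma}$ so that $\gamma\geq\bar{\gamma}$. The only difference is that you spell out the (H3)-based justification of the lower bound, which the paper leaves implicit.
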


\begin{proof}
By Proposition \ref{prop-uv}, we obtain that for any $\gamma'>\gamma$,
\begin{equation*}
	\overline{V}_{\gamma'}^0(0,0,i)\leq \overline{V}_{\gamma}^0(0,0,i)=0.\\
\end{equation*}
Thus
\begin{equation*}
	\overline{V}_{\gamma'}^0(0,0,i)=0,\\
\end{equation*}
which implies $\gamma\in\bar{\Gamma}$. This completes the proof.
\end{proof}

Moreover, set
\begin{equation}\label{gamma*}
	\gamma^*:=\underset{u(\cdot)\in L_{\mathbb{G}}^2(t,T;\mathbb{R}^m)}\inf\underset{\substack{v\neq0\\v(\cdot)\in L_{\mathbb{F}}^2(t,T;\mathbb{R}^{n_v}) }}\sup\frac{J^0(0,0,i;u(\cdot),v(\cdot))^{\frac{1}{2}}}{\big(\mathbb{E}\int_{0}^{T}\vert v(s)\vert^2ds\big)^{\frac{1}{2}}},
\end{equation}
which is the optimal (minimax) disturbance attenuation level. The outer minimization problem in (\ref{gamma*}) is often referred to as the finite-horizon disturbance attenuation problem (see \cite{Basar95}). If there exists a control $u(\cdot)\in L_{\mathbb{G}}^2(t,T;\mathbb{R}^m)$ satisfying the minimax disturbance attenuation bound $\gamma^*$ in (\ref{gamma*}). Then (\ref{gamma*}) becomes equivalent to:\\
(\romannumeral1)
\begin{equation}\label{eqv_def1}
	J^0(0,0,i;u^*(\cdot),v(\cdot))\leq\gamma^{*^2}\mathbb{E}\int_0^T\vert v(s)\vert^2ds,\qquad\forall v(\cdot)\in L_{\mathbb{F}}^2(t,T;\mathbb{R}^{n_v}).
\end{equation}
(\romannumeral2) There is no other $\hat{u}(\cdot)\in L_{\mathbb{G}}^2(t,T;\mathbb{R}^m)$ and a corresponding $\hat{\gamma}<\gamma^*$, such that
\begin{equation}\label{eqv_def2}
	J^0(0,0,i;\hat{u}(\cdot),v(\cdot))\leq\hat{\gamma}^2\mathbb{E}\int_0^T\vert v(s)\vert^2ds,\qquad\forall v(\cdot)\in L_{\mathbb{F}}^2(t,T;\mathbb{R}^{n_v}).
\end{equation}

\begin{mypro}\label{prop2.1}
Let (H1)-(H3) hold, then $\gamma^*=\bar{\gamma}$.
\end{mypro}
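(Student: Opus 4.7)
The plan is to prove $\gamma^* = \bar\gamma$ via the two inequalities $\bar\gamma \leq \gamma^*$ and $\gamma^* \leq \bar\gamma$ separately. The central tools are the identity $J_\gamma^0(0,0,i;u,v) = J^0(0,0,i;u,v) - \gamma^2 \mathbb{E}\int_0^T |v(s)|^2 ds$ and the fact that, for zero initial state and vanishing non-homogeneous terms, the state process is linear in $(u,v)$, so that $(u,v) \mapsto J^0(0,0,i;u,v)$ is a continuous quadratic form on $L_{\mathbb{G}}^2(0,T;\mathbb{R}^m) \times L_{\mathbb{F}}^2(0,T;\mathbb{R}^{n_v})$; moreover (H3) implies $J^0 \geq 0$ via the usual completion of squares on the integrand.

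For $\bar\gamma \leq \gamma^*$, fix any $\gamma > \gamma^*$. The infimum definition of $\gamma^*$ provides $u \in L_{\mathbb{G}}^2(0,T;\mathbb{R}^m)$ with $\sup_{v \neq 0} J^0(0,0,i;u,v)^{1/2}/(\mathbb{E}\int_0^T|v|^2 ds)^{1/2} < \gamma$. Continuity of $v \mapsto J^0(0,0,i;u,v)$ at $0$ together with $J^0 \geq 0$ forces $J^0(0,0,i;u,0) = 0$, and consequently $J^0(0,0,i;u,v) \leq \gamma^2 \mathbb{E}\int_0^T|v|^2 ds$ for every $v$. Then for any $\gamma' > \gamma$, $J_{\gamma'}^0(0,0,i;u,v) \leq (\gamma^2 - \gamma'^2)\mathbb{E}\int_0^T|v|^2 ds \leq 0$, so $\overline{V}_{\gamma'}^0(0,0,i) \leq 0$; combined with the lower bound $\overline{V}_{\gamma'}^0(0,0,i) \geq 0$ (take $v = 0$ in the inner supremum and use $J^0 \geq 0$), this gives $\overline{V}_{\gamma'}^0(0,0,i) = 0$. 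Hence $\gamma \in \bar\Gamma$ and $\bar\gamma \leq \gamma$, and letting $\gamma \downarrow \gamma^*$ yields $\bar\gamma \leq \gamma^*$.

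For $\gamma^* \leq \bar\gamma$, fix any $\gamma > \bar\gamma$. Because $\bar\gamma = \inf \bar\Gamma$, there exists $\gamma_0 \in \bar\Gamma$ with $\gamma_0 < \gamma$; the membership condition then delivers $\overline{V}_\gamma^0(0,0,i) = 0$. For every $\epsilon > 0$ one can therefore select $u_\epsilon \in L_{\mathbb{G}}^2(0,T;\mathbb{R}^m)$ with $\sup_v J_\gamma^0(0,0,i;u_\epsilon,v) < \epsilon$. The decisive move is to test this bound at $v = \lambda v_0$, for an arbitrary fixed $v_0 \in L_{\mathbb{F}}^2(0,T;\mathbb{R}^{n_v}) \setminus \{0\}$ and $\lambda \in \mathbb{R}$. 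Since $J^0$ is quadratic in $(u,v)$ with pure $v$-block equal to $J^0(0,0,i;0,\cdot)$, expansion produces $J_\gamma^0(0,0,i;u_\epsilon,\lambda v_0) = \lambda^2 \big(J^0(0,0,i;0,v_0) - \gamma^2 \mathbb{E}\int_0^T|v_0|^2 ds\big) + 2\lambda c_1 + c_0$ with constants $c_0, c_1$ independent of $\lambda$; for this quadratic polynomial in $\lambda$ to remain below $\epsilon$ uniformly in $\lambda \in \mathbb{R}$, its leading coefficient must be $\leq 0$, giving $J^0(0,0,i;0,v_0) \leq \gamma^2 \mathbb{E}\int_0^T|v_0|^2 ds$. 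Since $v_0$ was arbitrary, $\sup_{v \neq 0} J^0(0,0,i;0,v)^{1/2}/(\mathbb{E}\int_0^T|v|^2 ds)^{1/2} \leq \gamma$; taking $u = 0$ in the infimum defining $\gamma^*$ then yields $\gamma^* \leq \gamma$, and letting $\gamma \downarrow \bar\gamma$ completes the argument.

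The subtle step is this second inequality. A direct application of the approximate bound $J^0(0,0,i;u_\epsilon,v) \leq \gamma^2 \mathbb{E}\int_0^T|v|^2 ds + \epsilon$ only yields $J^0(u_\epsilon,v)^{1/2}/\|v\| \leq \sqrt{\gamma^2 + \epsilon/\|v\|^2}$, which blows up as $\|v\| \to 0$ and cannot control the supremum-ratio defining $\gamma^*$ when $u_\epsilon \neq 0$. The linear scaling $v = \lambda v_0$ with $|\lambda| \to \infty$ circumvents this by peeling off the $u_\epsilon$-dependent lower-order terms and isolating exactly the pure $v$-quadratic part $J^0(0,0,i;0,v_0)$, which is what $\gamma^*$ dominates via the admissible choice $u \equiv 0$.
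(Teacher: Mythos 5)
Your proof is correct, and while the first inequality $\bar{\gamma}\leq\gamma^*$ follows essentially the same route as the paper (choose a near-minimax $u$, observe $J^0_{\gamma'}\leq 0$ for $\gamma'>\gamma$, combine with the lower bound $\overline{V}_{\gamma'}^0(0,0,i)\geq 0$), your treatment of the reverse inequality $\gamma^*\leq\bar{\gamma}$ is genuinely different from, and cleaner than, the paper's. The paper takes a sequence $\gamma_k\downarrow\bar{\gamma}$ with associated near-optimal controls $u_k$ satisfying $J^0_{\gamma_k+1/k}(0,0,i;u_k,v)\leq\epsilon$, asserts the existence of a ``common convergent subsequence'' with limit $\bar{u}$ satisfying $J^0_{\bar{\gamma}}(0,0,i;\bar{u},v)\leq\epsilon$ (a compactness and lower-semicontinuity step that is stated without justification), and then concludes via the equivalent characterization (\ref{eqv_def2}) of $\gamma^*$ --- which itself presupposes that the minimax bound $\gamma^*$ is attained by some control. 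Your scaling argument $v=\lambda v_0$, $|\lambda|\to\infty$, exploits the joint quadratic structure of $(u,v)\mapsto J^0(0,0,i;u,v)$ (valid since $x^0$ is linear in $(u,v)$ for zero initial state and vanishing inhomogeneities) to force the leading coefficient $J^0(0,0,i;0,v_0)-\gamma^2\mathbb{E}\int_0^T|v_0|^2ds$ to be nonpositive, and then bounds $\gamma^*$ by testing the single admissible control $u\equiv 0$ in the outer infimum. This bypasses both the unjustified subsequence extraction and the attainment assumption, at the modest cost of invoking the quadratic-form structure explicitly; your closing remark correctly identifies why the naive ratio estimate fails as $\|v\|\to 0$ and why the scaling is the right fix. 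The argument is complete as written.
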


\begin{proof}
It is first proven that $\bar{\gamma}\leq\gamma^*$ holds. From (\ref{gamma*}), we have for any $\epsilon>0$, there exists some $u_0(\cdot)\in L_{\mathbb{G}}^2(t,T;\mathbb{R}^m)$ such that
\begin{equation*}
	\underset{\substack{v\neq0\\v(\cdot)\in L_{\mathbb{F}}^2(t,T;\mathbb{R}^{n_v})}}\sup\frac{J^0(0,0,i;u_0(\cdot),v(\cdot))}{\mathbb{E}\int_{0}^{T}\vert v(s)\vert^2ds}\leq\gamma^{*^2}+\epsilon.\\
\end{equation*}
By the arbitrariness of $\epsilon$, for any $v(\cdot)\in L_{\mathbb{F}}^2(t,T;\mathbb{R}^{n_v})$, $v\neq0$, we have
\begin{equation*}
	J_{\gamma^*}^0(0,0,i;u_0(\cdot),v(\cdot))=J^0(0,0,i;u_0(\cdot),v(\cdot))-\gamma^{*^2}\mathbb{E}\int_{0}^{T}\vert v(s)\vert^2ds\leq0.\\
\end{equation*}
Hence, 
\begin{equation*}
	\overline{V}_{\gamma^*}^0(0,0,i)\leq\underset{v(\cdot)\in L_{\mathbb{F}}^2(t,T;\mathbb{R}^{n_v})}\sup J_{\gamma^*}^0(0,0,i;u_0(\cdot),v(\cdot))\leq0.
\end{equation*}
Since 
\begin{equation*}
	\overline{V}_{\gamma^*}^0(0,0,i)\geq0, 
\end{equation*}
we obtain that for any $\gamma>\gamma^*$,
\begin{equation*}
	\overline{V}_\gamma^0(0,0,i)\leq\underset{v(\cdot)\in L_{\mathbb{F}}^2(t,T;\mathbb{R}^{n_v})}\sup J_{\gamma^*}^0(0,0,i;u_0(\cdot),v(\cdot))=0.
\end{equation*}
Therefore, $\gamma^*\in\bar{\Gamma}$, which implies $\bar{\gamma}\leq\gamma^*$.

On the other hand, by the definition of $\bar{\gamma}$, it follows that for sufficiently large positive integer $k$, there exists a monotonic non-increasing sequence $\{\gamma_k\}_{k\geq0}\subseteq\bar{\Gamma}$ such that $\gamma_k\downarrow\bar{\gamma}$ as $k$ tends to infinity. Furthermore, by the definition of the set $\bar{\Gamma}$, we have
\begin{equation*}
	\overline{V}_{\gamma_k+\frac{1}{k}}^0(0,0,i)=0.\\
\end{equation*}
Therefore, for any $\epsilon>0$, there exists $u_k(\cdot)\in L_{\mathbb{G}}^2(t,T;\mathbb{R}^m)$ such that
\begin{equation*}
	J_{\gamma_k+\frac{1}{k}}^0(0,0,i;u_k(\cdot),v(\cdot))\leq\epsilon,\quad\forall v(\cdot)\in L_{\mathbb{F}}^2(t,T;\mathbb{R}^{n_v}).
\end{equation*}
There exists a common convergent subsequence and a $\bar{u}(\cdot)\in L_{\mathbb{G}}^2(t,T;\mathbb{R}^m)$ satisfying
\begin{equation*}
	J_{\bar{\gamma}}^0(0,0,i;\bar{u}(\cdot),v(\cdot))\leq\epsilon,\quad\forall v(\cdot)\in L_{\mathbb{F}}^2(t,T;\mathbb{R}^{n_v}).
\end{equation*}
Using the arbitrariness of $\epsilon$ and the second item (\ref{eqv_def2}) of the equivalent definition of (\ref{gamma*}), we can obtain $\bar{\gamma}\geq\gamma^*$, which is the desired result.
\end{proof}

\begin{Remark}
Since the disturbance attenuation level $\gamma$ should satisfy $\overline{V}_\gamma^0(0,0,i)=0$, it follows from Corollary \ref{cor1} and Proposition \ref{prop2.1} that $\gamma\geq\gamma^*$. Furthermore, according to Problem (R-SCG), $H_\infty$-performance holds only if $\gamma>\gamma^*$, and we take $\gamma>\gamma^*$ throughout the subsequent discussions.
\end{Remark}

\section{Robust $H_\infty$ control of Problem (R-SCG)}

For the purpose of deriving the robust $H_\infty$ optimal control of the stochastic $H_\infty$ control problem with Markovian regime switching under partial information, we reformulate this problem as an equivalent Problem (R-SCG) to obtain the robust $H_\infty$ optimal control and the worst-case disturbance. To this end, in this section, we first consider the soft-constrained zero-sum LQ stochastic differential game with Markov chain and partial information (Problem (SCG)) and present the sufficient conditions for the existence of the closed-loop saddle point, and then verify that the associated outcome of the closed-loop saddle point fulfills the $H_\infty$ performance criterion.

We define the filtering state and disturbance processes as follows: for $t\in[0,T), s\in[t,T]$,
\begin{equation}\label{not1}
	\hat{x}(s):=\mathbb{E}[x(s)|\mathcal{G}_s],\qquad \hat{v}(s):=\mathbb{E}[v(s)|\mathcal{G}_s],\qquad \hat{\xi}:=\mathbb{E}[\xi|\mathcal{G}_t],
\end{equation}
and the differences
\begin{equation}\label{not2}
	\tilde{x}(s):=x(s)-\hat{x}(s),\qquad \tilde{v}(s):=v(s)-\hat{v}(s),\qquad \tilde{\xi}:=\xi-\hat{\xi}.
\end{equation}
Based on the above notations, we present the following lemma, which will be frequently used in subsequent discussions.

\begin{mylem}\label{lem1}
If $X(\cdot)$, $Y(\cdot)$ are square-integrable, $\mathbb{F}$-adapted stochastic processes, for each $s\in[t,T]$ and $Z(\cdot)=X(\cdot), Y(\cdot)$, $\hat{Z}(s):=\mathbb{E}[Z(s)|\mathcal{G}_s]$ is the orthogonal projection of $Z(s)$ onto $L_{\mathcal{G}_s}^2(\Omega;\mathbb{R}^{n_Z})$, then the difference $\tilde{Z}(s):=Z(s)-\hat{Z}(s)$ is independent of $\mathcal{G}_s$ and
\begin{equation*}
\begin{aligned}
	\mathbb{E}\langle H(s,\alpha(s))Z(s),Z(s)\rangle&=\mathbb{E}\langle H(s,\alpha(s))\hat{Z}(s),\hat{Z}(s)\rangle+\mathbb{E}\langle H(s,\alpha(s))\tilde{Z}(s),\tilde{Z}(s)\rangle,\\
	\mathbb{E}\langle I(s,\alpha(s))\hat{X}(s),\tilde{Y}(s)\rangle&=0,\qquad \mathbb{E}\langle h(s,\alpha(s)),\tilde{Z}(s)\rangle=0,\\
\end{aligned}
\end{equation*}
where $H(\cdot,j)$, $I(\cdot,j)$ and $h(\cdot,j)$, $j\in\mathcal{S}$ are deterministic matrix (vector)-valued functions of proper dimensions, respectively.
\end{mylem}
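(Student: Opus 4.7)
The plan is to exploit the defining property of conditional expectation as $L^2$-orthogonal projection, combined with the key structural fact that $\alpha$ is adapted to the sub-filtration $\mathbb{G}$. Since the coefficients $H(s,\cdot)$, $I(s,\cdot)$, $h(s,\cdot)$ are deterministic functions of $s$ and the state $j\in\mathcal{S}$, and since $\alpha(s)$ is $\mathcal{G}_s$-measurable by construction of $\mathbb{G}$, all of $H(s,\alpha(s))$, $I(s,\alpha(s))$, $h(s,\alpha(s))$ are $\mathcal{G}_s$-measurable. The statement that $\tilde Z(s)$ is ``independent of'' $\mathcal{G}_s$ is to be read in the $L^2$-orthogonality sense: from $\hat Z(s)=\mathbb{E}[Z(s)\mid\mathcal{G}_s]$ one has $\mathbb{E}[\tilde Z(s)\mid\mathcal{G}_s]=\mathbb{E}[Z(s)\mid\mathcal{G}_s]-\hat Z(s)=0$, so $\tilde Z(s)$ is orthogonal to every $\mathcal{G}_s$-measurable square-integrable random variable.

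For the first identity I would substitute $Z=\hat Z+\tilde Z$ into $\langle H(s,\alpha(s))Z(s),Z(s)\rangle$ and expand. Taking expectations produces the two diagonal terms on the right-hand side together with the cross terms $\mathbb{E}\langle H(s,\alpha(s))\hat Z(s),\tilde Z(s)\rangle$ and $\mathbb{E}\langle H(s,\alpha(s))\tilde Z(s),\hat Z(s)\rangle$. These cross terms vanish by the tower property: since $H(s,\alpha(s))\hat Z(s)$ is $\mathcal{G}_s$-measurable, I would write
\begin{equation*}
\mathbb{E}\bigl\langle H(s,\alpha(s))\hat Z(s),\tilde Z(s)\bigr\rangle
=\mathbb{E}\bigl\langle H(s,\alpha(s))\hat Z(s),\mathbb{E}[\tilde Z(s)\mid\mathcal{G}_s]\bigr\rangle=0,
\end{equation*}
and the symmetric term is handled identically (using that $\langle a,b\rangle=\langle b,a\rangle$ and that $H(s,\alpha(s))^\top$ is still $\mathcal{G}_s$-measurable, so one may condition the $\tilde Z$ factor out).

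The remaining two identities follow by the same conditioning argument. For $\mathbb{E}\langle I(s,\alpha(s))\hat X(s),\tilde Y(s)\rangle$ the vector $I(s,\alpha(s))\hat X(s)$ is $\mathcal{G}_s$-measurable, hence
\begin{equation*}
\mathbb{E}\bigl\langle I(s,\alpha(s))\hat X(s),\tilde Y(s)\bigr\rangle
=\mathbb{E}\bigl\langle I(s,\alpha(s))\hat X(s),\mathbb{E}[\tilde Y(s)\mid\mathcal{G}_s]\bigr\rangle=0.
\end{equation*}
For the third identity, $h(s,\alpha(s))$ is itself $\mathcal{G}_s$-measurable, so $\mathbb{E}\langle h(s,\alpha(s)),\tilde Z(s)\rangle=\mathbb{E}\langle h(s,\alpha(s)),\mathbb{E}[\tilde Z(s)\mid\mathcal{G}_s]\rangle=0$.

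There is no real obstacle; the lemma is essentially a bookkeeping statement about orthogonal decomposition. The one subtlety worth flagging is that every weighting coefficient involves $\alpha(s)$, so the argument relies critically on $\alpha$ being $\mathbb{G}$-adapted, which is built into the definition of $\mathbb{G}$ at the start of Section 2; if $\alpha$ were adapted only to $\mathbb{F}$ and not to $\mathbb{G}$, the cross terms would not drop out and none of the three identities would hold in the stated form.
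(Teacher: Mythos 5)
Your proposal is correct and follows essentially the same route as the paper's proof: establish $\mathbb{E}[\tilde Z(s)\mid\mathcal{G}_s]=0$, use the $\mathcal{G}_s$-measurability of the coefficients (which holds because $\alpha$ is $\mathbb{G}$-adapted) together with the tower property to kill the cross terms, and then expand $Z=\hat Z+\tilde Z$ in the quadratic form. Your added remark that ``independent of $\mathcal{G}_s$'' should really be read as $L^2$-orthogonality is a fair observation, since the paper likewise only proves the conditional-expectation identity rather than genuine independence.
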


\begin{proof}
Firstly,
\begin{equation*}
	\mathbb{E}[\tilde{Z}(s)|\mathcal{G}_s]=\mathbb{E}[Z(s)-\hat{Z}(s)|\mathcal{G}_s]=\mathbb{E}[Z(s)|\mathcal{G}_s]-\hat{Z}(s)=0.
\end{equation*}
Then,
\begin{equation*}
	\mathbb{E}\langle h(s,\alpha(s)),\tilde{Z}(s)\rangle=\mathbb{E}[ h(s,\alpha(s))^\top\tilde{Z}(s)]=\mathbb{E}\left[  h(s,\alpha(s))^\top\mathbb{E}[ \tilde{Z}(s)|\mathcal{G}_s]\right] =0,
\end{equation*}
which implies the third equation. And
\begin{equation*}
\begin{aligned}
	&\mathbb{E}\langle I(s,\alpha(s))\hat{X}(s),\tilde{Y}(s)\rangle\\
	&=\mathbb{E}\left[\mathbb{E}[\hat{X}(s)^\top I(s,\alpha(s))^\top\tilde{Y}(s)|\mathcal{G}_s]\right]=\mathbb{E}\left[\hat{X}(s)^\top I(s,\alpha(s))^\top\mathbb{E}[\tilde{Y}(s)|\mathcal{G}_s]\right]=0.
\end{aligned}
\end{equation*}
Similarly, we can get
\begin{equation*}
    \mathbb{E}\langle H(s,\alpha(s))\hat{Z}(s),\tilde{Z}(s)\rangle=0.
\end{equation*}
Using $Z(\cdot)=\hat{Z}(\cdot)+\tilde{Z}(\cdot)$, the first equation of the lemma holds.
\end{proof}

By notations (\ref{not1}), (\ref{not2}) and Lemma \ref{lem1}, the filtering process $\hat{x}(\cdot)$ evolves according to the following SDE:
\begin{equation}\label{hat_x}
\left\{\begin{aligned}
	d\hat{x}(s)=&\big[A(s,\alpha(s))\hat{x}(s)+B_1(s,\alpha(s))u(s)+B_2(s,\alpha(s))\hat{v}(s)+b(s,\alpha(s))\big]ds\\
	&+\big[C(s,\alpha(s))\hat{x}(s)+D_1(s,\alpha(s))u(s)+D_2(s,\alpha(s))\hat{v}(s)+\sigma(s,\alpha(s))\big]dW(s),\\
	\hat{x}(t)=&\ \hat{\xi},\quad\alpha(t)=i,
\end{aligned}\right.
\end{equation}
and the difference $\tilde{x}(\cdot)$ satisfies the following SDE:
\begin{equation}\label{tilde_x}
\left\{\begin{aligned}
	d\tilde{x}(s)=&\big[A(s,\alpha(s))\tilde{x}(s)+B_2(s,\alpha(s))\tilde{v}(s)\big]ds+\big[C(s,\alpha(s))\tilde{x}(s)+D_2(s,\alpha(s))\tilde{v}(s)\big]dW(s)\\
	&+\big[\bar{C}(s,\alpha(s))x(s)+\bar{D}_1(s,\alpha(s))u(s)+\bar{D}_2(s,\alpha(s))v(s)+\bar{\sigma}(s,\alpha(s))\big]d\overline{W}(s),\\
	\tilde{x}(t)=&\ \tilde{\xi},\quad\alpha(t)=i.
\end{aligned}\right.
\end{equation}

Moreover, the soft-constrained cost functional $J_\gamma(t,\xi,i;u(\cdot),v(\cdot))$ can be rewritten as
\begin{equation}
\begin{aligned}
	&J_\gamma(t,\xi,i;u(\cdot),v(\cdot))=\mathbb{E}\bigg\{\langle G(T,\alpha(T))\tilde{x}(T),\tilde{x}(T)\rangle+\int_t^T\Big[\langle Q(s,\alpha(s))\tilde{x}(s),\tilde{x}(s)\rangle\\
	&\qquad+\big\langle (R_2(s,\alpha(s))-\gamma^2I)\tilde{v}(s),\tilde{v}(s)\big\rangle+2\langle S_2(s,\alpha(s))\tilde{x}(s),\tilde{v}(s)\rangle\Big]ds\bigg\}\\
	&\qquad+\mathbb{E}\bigg\{\langle G(T,\alpha(T))\hat{x}(T),\hat{x}(T)\rangle+2\langle g(T,\alpha(T)),\hat{x}(T)\rangle+\int_t^T\Big[  \langle Q(s,\alpha(s))\hat{x}(s),\hat{x}(s)\rangle\\
	&\qquad\qquad+\langle R_1(s,\alpha(s))u(s),u(s)\rangle+\big\langle (R_2(s,\alpha(s))-\gamma^2I)\hat{v}(s),\hat{v}(s)\big\rangle\\
	&\qquad\qquad+2\langle S_1(s,\alpha(s))\hat{x}(s),u(s)\rangle+2\langle S_2(s,\alpha(s))\hat{x}(s),\hat{v}(s)\rangle+2\langle q(s,\alpha(s)),\hat{x}(s)\rangle \\
	&\qquad\qquad+2\langle \rho_1(s,\alpha(s)),u(s)\rangle+2\langle \rho_2(s,\alpha(s)),\hat{v}(s)\rangle \Big]ds\bigg\}.
\end{aligned}
\end{equation}

To obtain the closed-loop saddle point of Problem (SCG), inspired by the work of Yu \cite{Yu15} on a zero-sum LQ stochastic differential game with complete information, we employ the technique of completion-of-squares to obtain the optimal feedback control-strategy pair in a closed-loop form based on the solution of Riccati equations, and then prove that the optimal feedback control-strategy pair is an outcome of the closed-loop saddle point of Problem (SCG). To this end, we first introduce the notion of Elliott–Kalton strategies (or \emph{nonanticipative} strategies) for two-person zero-sum games (see \cite{Elliot72,Yu15,Moon25}).

\begin{mydef}\label{Def1}
A nonanticipative strategy for Player 1 is a mapping $\alpha_1: L_{\mathbb{F}}^2(t,T;\mathbb{R}^{n_v})\to L_{\mathbb{G}}^2(t,T;\mathbb{R}^{m})$ such that for any $\mathbb{F}$-stopping time $\tau:\Omega\to[t,T]$ and any $v'(\cdot),v''(\cdot)\in L_{\mathbb{F}}^2(t,T;\mathbb{R}^{n_v})$, with $v'\equiv v''$ on $[t,\tau]$, it holds that $\alpha_1(v')\equiv\alpha_1(v'')$ on $[t,\tau]$. The nonanticipative strategy $\alpha_2: L_{\mathbb{G}}^2(t,T;\mathbb{R}^{m})\to L_{\mathbb{F}}^2(t,T;\mathbb{R}^{n_v})$ for Player 2 are defined in a similar way. The collection of all admissible nonanticipative strategies for Player 1 (resp., Player 2) over $[t,T]$ is denoted by $\mathcal{A}_1[t,T]$ (resp., $\mathcal{A}_2[t,T]$).
\end{mydef}

In light of Definition \ref{Def1}, we formulate the following two auxiliary problems.

\textbf{Problem (SCG-1).} For any $t\in[0,T)$ and the disturbance attenuation level $\gamma>0$, find an admissible control $u^*(\cdot)\in L_{\mathbb{G}}^2(t,T;\mathbb{R}^{m})$ and an admissible nonanticipative strategy $\alpha_2^*(\cdot)\in\mathcal{A}_2[t,T]$ such that
\begin{equation}\label{CS-1}
	J_\gamma(t,\xi,i;u^*(\cdot),\alpha_2^*(u^*)(\cdot))=\underset{\alpha_2(\cdot)\in\mathcal{A}_2[t,T]}\sup\underset{u(\cdot)\in L_{\mathbb{G}}^2(t,T;\mathbb{R}^{m})}\inf J_\gamma(t,\xi,i;u(\cdot),\alpha_2(u)(\cdot)).
\end{equation}
If there exist an admissible control $u^*(\cdot)$ and admissible nonanticipative strategy $\alpha_2^*(\cdot)$ satisfying (\ref{CS-1}), then we call $(u^*(\cdot),\alpha_2^*(\cdot))$ an optimal control-strategy pair of Player 1’s value $J_\gamma(t,\xi,i;u^*(\cdot),\alpha_2^*(u^*)(\cdot))$.

\textbf{Problem (SCG-2).} For any $t\in[0,T)$ and the disturbance attenuation level $\gamma>0$, find an admissible control $v^*(\cdot)\in L_{\mathbb{F}}^2(t,T;\mathbb{R}^{n_v})$ and an admissible nonanticipative strategy $\alpha_1^*(\cdot)\in\mathcal{A}_1[t,T]$ such that
\begin{equation}\label{CS-2}
	J_\gamma(t,\xi,i;\alpha_1^*(v^*)(\cdot),v^*(\cdot))=\underset{\alpha_1(\cdot)\in\mathcal{A}_1[t,T]}\inf\underset{v(\cdot)\in L_{\mathbb{F}}^2(t,T;\mathbb{R}^{n_v})}\sup J_\gamma(t,\xi,i;\alpha_1(v)(\cdot),v(\cdot)).
\end{equation}
If there exist an admissible control $v^*(\cdot)$ and admissible nonanticipative strategy $\alpha_1^*(\cdot)$ satisfying (\ref{CS-2}), then we call $(\alpha_1^*(\cdot),v^*(\cdot))$ an optimal control-strategy pair of Player 2’s value $J_\gamma(t,\xi,i;\alpha_1^*(v^*)(\cdot),v^*(\cdot))$.

In the following, we first introduce the following two sets of coupled backward generalized Riccati equations:

\begin{equation}\label{Pi}
\hspace{-2mm}\left\{\begin{aligned}
	&\dot{\Pi}(s,i)+\Pi(s,i)A(s,i)+A(s,i)^\top\Pi(s,i)+C(s,i)^\top\Pi(s,i)C(s,i)+\bar{C}(s,i)^\top\Pi(s,i)\bar{C}(s,i)\\
	&\quad -\bar{\mathcal{S}}_2(s,i)^\top\bar{\mathcal{R}}_2(s,i)^{-1}\bar{\mathcal{S}}_2(s,i)+Q(s,i)+\sum_{j=1}^{D}\lambda_{ij}\Pi(s,j)=0,\\
	&\Pi(T,i)=G(T,i),\quad\bar{\mathcal{R}}_2(s,i)\ll0,\quad a.e.\;s\in[t,T],\quad i\in\mathcal{S},
\end{aligned}\right.
\end{equation}
\begin{equation}\label{P}
\hspace{-2mm}\left\{\begin{aligned}
	&\dot{P}(s,i)+P(s,i)A(s,i)+A(s,i)^\top P(s,i)+C(s,i)^\top P(s,i)C(s,i)+\bar{C}(s,i)^\top\Pi(s,i)\bar{C}(s,i)\\
	&\quad -\hat{\mathcal{S}}(s,i)^\top\hat{\mathcal{R}}(s,i)^{-1}\hat{\mathcal{S}}(s,i)+Q(s,i)+\sum_{j=1}^{D}\lambda_{ij}P(s,j)=0,\qquad a.e.\;s\in[t,T],\\
	&P(T,i)=G(T,i),\quad i\in\mathcal{S},
\end{aligned}\right.
\end{equation}
where
\begin{equation*}
\begin{aligned}
	\bar{\mathcal{S}}_2(s,i)&:=B_2(s,i)^\top\Pi(s,i)+D_2(s,i)^\top\Pi(s,i)C(s,i)+\bar{D}_2(s,i)^\top\Pi(s,i)\bar{C}(s,i)+S_2(s,i),\\
	\bar{\mathcal{R}}_2(s,i)&:=R_2(s,i)-\gamma^2I+D_2(s,i)^\top\Pi(s,i)D_2(s,i)+\bar{D}_2(s,i)^\top\Pi(s,i)\bar{D}_2(s,i),\\
	\hat{\mathcal{S}}(s,i)&:=\begin{pmatrix}
		\hat{\mathcal{S}}_1(s,i) \\
		\hat{\mathcal{S}}_2(s,i)
	\end{pmatrix}:=B(s,i)^\top P(s,i)+D(s,i)^\top P(s,i)C(s,i)\\
    &\qquad\qquad\qquad\qquad +\bar{D}(s,i)^\top \Pi(s,i)\bar{C}(s,i)+S(s,i),\\
\end{aligned}
\end{equation*}
\begin{equation*}
\begin{aligned}	
        \hat{\mathcal{R}}(s,i)&:=\begin{pmatrix}
		\hat{\mathcal{R}}_{11}(s,i) & \hat{\mathcal{R}}_{12}(s,i) \\
		\hat{\mathcal{R}}_{12}(s,i)^\top & \hat{\mathcal{R}}_{22}(s,i)\\
	\end{pmatrix}:=R_\gamma(s,i)+D(s,i)^\top P(s,i)D(s,i)+\bar{D}(s,i)^\top \Pi(s,i)\bar{D}(s,i).\\
\end{aligned}
\end{equation*}

We next present the following lemma, which essentially shows two equivalent forms of the Riccati equation (\ref{P}), and it plays an important role in using the completion-of-squares method to find the optimal feedback control-strategy pairs for two players in the subsequent analysis.

\begin{mylem}\label{lem2}
For any $(s,i)\in[t,T]\times\mathcal{S}$ and $(\Pi,P)\in\mathbb{S}^n\times\mathbb{S}^n$,\\
(\romannumeral1) If the matrices $\hat{\mathcal{R}}_{22}(s,i)$ and $\hat{\mathcal{R}}_{11}(s,i)-\hat{\mathcal{R}}_{12}(s,i)\hat{\mathcal{R}}_{22}(s,i)^{-1}\hat{\mathcal{R}}_{12}(s,i)^\top$ are invertible, then $\hat{\mathcal{R}}(s,i)$ is also invertible, and 
\begin{equation}\label{form1}
\begin{aligned}
	&\hat{\mathcal{S}}(s,i)^\top\hat{\mathcal{R}}(s,i)^{-1}\hat{\mathcal{S}}(s,i)=\hat{\mathcal{S}}_2(s,i)^\top\hat{\mathcal{R}}_{22}(s,i)^{-1}\hat{\mathcal{S}}_2(s,i)\\
	&\quad +\big(\hat{\mathcal{S}}_1(s,i)-\hat{\mathcal{R}}_{12}(s,i)\hat{\mathcal{R}}_{22}(s,i)^{-1}\hat{\mathcal{S}}_2(s,i)\big)^\top\big(\hat{\mathcal{R}}_{11}(s,i)\\
	&\quad -\hat{\mathcal{R}}_{12}(s,i)\hat{\mathcal{R}}_{22}(s,i)^{-1}\hat{\mathcal{R}}_{12}(s,i)^\top\big)^{-1}\big(\hat{\mathcal{S}}_1(s,i)-\hat{\mathcal{R}}_{12}(s,i)\hat{\mathcal{R}}_{22}(s,i)^{-1}\hat{\mathcal{S}}_2(s,i)\big).
\end{aligned}
\end{equation}\label{form2}
(\romannumeral2) If the matrices $\hat{\mathcal{R}}_{11}(s,i)$ and $\hat{\mathcal{R}}_{22}(s,i)-\hat{\mathcal{R}}_{12}(s,i)^\top\hat{\mathcal{R}}_{11}(s,i)^{-1}\hat{\mathcal{R}}_{12}(s,i)$ are invertible, then $\hat{\mathcal{R}}(s,i)$ is also invertible, and 
\begin{equation}
\begin{aligned}
	&\hat{\mathcal{S}}(s,i)^\top\hat{\mathcal{R}}(s,i)^{-1}\hat{\mathcal{S}}(s,i)=\hat{\mathcal{S}}_1(s,i)^\top\hat{\mathcal{R}}_{11}(s,i)^{-1}\hat{\mathcal{S}}_1(s,i)\\
	&\quad +\big(\hat{\mathcal{S}}_2(s,i)-\hat{\mathcal{R}}_{12}(s,i)^\top\hat{\mathcal{R}}_{11}(s,i)^{-1}\hat{\mathcal{S}}_1(s,i)\big)^\top\big(\hat{\mathcal{R}}_{22}(s,i)\\
	&\quad -\hat{\mathcal{R}}_{12}(s,i)^\top\hat{\mathcal{R}}_{11}(s,i)^{-1}\hat{\mathcal{R}}_{12}(s,i)\big)^{-1}\big(\hat{\mathcal{S}}_2(s,i)-\hat{\mathcal{R}}_{12}(s,i)^\top\hat{\mathcal{R}}_{11}(s,i)^{-1}\hat{\mathcal{S}}_1(s,i)\big).
\end{aligned}
\end{equation} 
\end{mylem}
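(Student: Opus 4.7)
The plan is to prove this as a purely linear-algebraic identity that holds pointwise in $(s,i)\in[t,T]\times\mathcal{S}$; no properties of the underlying stochastic control problem beyond the symmetric $2\times 2$ block structure of $\hat{\mathcal{R}}(s,i)$ are used. The unifying idea is the Schur-complement / block-matrix inversion formula, applied first with $\hat{\mathcal{R}}_{22}$ as pivot (for (i)) and then with $\hat{\mathcal{R}}_{11}$ as pivot (for (ii)).

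For part (i), the first step is the block $LDL^{\top}$ factorization
\[
\hat{\mathcal{R}}(s,i)=\begin{pmatrix} I & \hat{\mathcal{R}}_{12}\hat{\mathcal{R}}_{22}^{-1} \\ 0 & I \end{pmatrix}\begin{pmatrix} M & 0 \\ 0 & \hat{\mathcal{R}}_{22} \end{pmatrix}\begin{pmatrix} I & 0 \\ \hat{\mathcal{R}}_{22}^{-1}\hat{\mathcal{R}}_{12}^{\top} & I \end{pmatrix},
\]
with Schur complement $M:=\hat{\mathcal{R}}_{11}-\hat{\mathcal{R}}_{12}\hat{\mathcal{R}}_{22}^{-1}\hat{\mathcal{R}}_{12}^{\top}$, verified by direct block multiplication. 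Since the two outer triangular factors have determinant one and both $M$ and $\hat{\mathcal{R}}_{22}$ are invertible by hypothesis, $\hat{\mathcal{R}}(s,i)$ is invertible, and its inverse is obtained by inverting the three factors in reverse order.

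Next I would substitute this closed form into $\hat{\mathcal{S}}^{\top}\hat{\mathcal{R}}^{-1}\hat{\mathcal{S}}$ with $\hat{\mathcal{S}}=(\hat{\mathcal{S}}_1^{\top},\hat{\mathcal{S}}_2^{\top})^{\top}$. Applying the outermost (upper-triangular) factor of $\hat{\mathcal{R}}^{-1}$ to $\hat{\mathcal{S}}$ realizes the linear change of variable that replaces $\hat{\mathcal{S}}_1$ by $\hat{\mathcal{S}}_1-\hat{\mathcal{R}}_{12}\hat{\mathcal{R}}_{22}^{-1}\hat{\mathcal{S}}_2$ while leaving $\hat{\mathcal{S}}_2$ unchanged, after which the quadratic form separates along the block-diagonal middle factor into the two summands on the right-hand side of (\ref{form1}). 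Equivalently, this is completion-of-squares in two vector variables: writing $\hat{\mathcal{S}}_1=(\hat{\mathcal{S}}_1-\hat{\mathcal{R}}_{12}\hat{\mathcal{R}}_{22}^{-1}\hat{\mathcal{S}}_2)+\hat{\mathcal{R}}_{12}\hat{\mathcal{R}}_{22}^{-1}\hat{\mathcal{S}}_2$ and expanding, the cross terms cancel exactly against the off-diagonal contributions of $\hat{\mathcal{R}}^{-1}$.

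Part (ii) is obtained by the symmetric $UDU^{\top}$ decomposition
\[
\hat{\mathcal{R}}(s,i)=\begin{pmatrix} I & 0 \\ \hat{\mathcal{R}}_{12}^{\top}\hat{\mathcal{R}}_{11}^{-1} & I \end{pmatrix}\begin{pmatrix} \hat{\mathcal{R}}_{11} & 0 \\ 0 & N \end{pmatrix}\begin{pmatrix} I & \hat{\mathcal{R}}_{11}^{-1}\hat{\mathcal{R}}_{12} \\ 0 & I \end{pmatrix},
\]
where $N:=\hat{\mathcal{R}}_{22}-\hat{\mathcal{R}}_{12}^{\top}\hat{\mathcal{R}}_{11}^{-1}\hat{\mathcal{R}}_{12}$, followed by the same completion-of-squares computation with the block roles interchanged. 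There is no genuine obstacle: the identities are purely algebraic and hold pointwise in $(s,i)$. The one step requiring care is bookkeeping the transposes, but the symmetry of $\hat{\mathcal{R}}_{11}$ and $\hat{\mathcal{R}}_{22}$ together with the correct placement of $\hat{\mathcal{R}}_{12}$ versus $\hat{\mathcal{R}}_{12}^{\top}$ makes the outer triangular factor of each decomposition exactly the transpose of the inner one, which is precisely what ensures that the reduced form on the right-hand side is a genuinely symmetric quadratic form of the type claimed.
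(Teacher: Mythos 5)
Your argument is correct and coincides with the paper's own proof: both establish invertibility and the identity via the block-triangular (Schur-complement) factorization of $\hat{\mathcal{R}}$ with $\hat{\mathcal{R}}_{22}$ (resp.\ $\hat{\mathcal{R}}_{11}$) as pivot, then substitute the resulting expression for $\hat{\mathcal{R}}^{-1}$ into $\hat{\mathcal{S}}^\top\hat{\mathcal{R}}^{-1}\hat{\mathcal{S}}$ so that the quadratic form separates along the block-diagonal middle factor. No gaps; the decomposition you write is exactly the paper's congruence relation rearranged.
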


\begin{proof}
We only prove (\romannumeral1), the proof of (\romannumeral2) follows the same technique. Firstly, the invertibility of $\hat{\mathcal{R}}(s,i)$ comes from the following relation:
\begin{equation*}
\begin{aligned}
	\begin{pmatrix}
		I & -\hat{\mathcal{R}}_{12}\hat{\mathcal{R}}_{22}^{-1} \\
		0 & I\\
	\end{pmatrix}\hat{\mathcal{R}}
	\begin{pmatrix}
		I & 0 \\
		-\hat{\mathcal{R}}_{22}^{-1}\hat{\mathcal{R}}_{12}^\top& I\\
	\end{pmatrix}=
	\begin{pmatrix}
		\hat{\mathcal{R}}_{11}-\hat{\mathcal{R}}_{12}\hat{\mathcal{R}}_{22}^{-1}\hat{\mathcal{R}}_{12}^\top & 0\\
		0 & \hat{\mathcal{R}}_{22}\\
	\end{pmatrix}.
\end{aligned}
\end{equation*}
Hence, the inverse of $\hat{\mathcal{R}}(s,i)$ can be written by
\begin{equation*}
	\hat{\mathcal{R}}(s,i)^{-1}=\begin{pmatrix}
		I & 0 \\
		-\hat{\mathcal{R}}_{22}^{-1}\hat{\mathcal{R}}_{12}^\top& I\\
	\end{pmatrix}\begin{pmatrix}
	(\hat{\mathcal{R}}_{11}-\hat{\mathcal{R}}_{12}\hat{\mathcal{R}}_{22}^{-1}\hat{\mathcal{R}}_{12}^\top)^{-1} & 0\\
	0 & \hat{\mathcal{R}}_{22}^{-1}\\
	\end{pmatrix}\begin{pmatrix}
	I & -\hat{\mathcal{R}}_{12}\hat{\mathcal{R}}_{22}^{-1} \\
	0 & I\\
	\end{pmatrix}.
\end{equation*} 
Then, substituting the above equation into $\hat{\mathcal{S}}^\top\hat{\mathcal{R}}^{-1}\hat{\mathcal{S}}$ and simplifying it, we can obtain 
\begin{equation*}
\begin{aligned}
	&\hat{\mathcal{S}}(s,i)^\top\hat{\mathcal{R}}(s,i)^{-1}\hat{\mathcal{S}}(s,i)\\
	&=\begin{pmatrix}
		\hat{\mathcal{S}}_1^\top & \hat{\mathcal{S}}_2^\top
	\end{pmatrix}\begin{pmatrix}
	I & 0 \\
	-\hat{\mathcal{R}}_{22}^{-1}\hat{\mathcal{R}}_{12}^\top& I\\
	\end{pmatrix}\begin{pmatrix}
	(\hat{\mathcal{R}}_{11}-\hat{\mathcal{R}}_{12}\hat{\mathcal{R}}_{22}^{-1}\hat{\mathcal{R}}_{12}^\top)^{-1} & 0\\
	0 & \hat{\mathcal{R}}_{22}^{-1}\\
	\end{pmatrix}\begin{pmatrix}
	I & -\hat{\mathcal{R}}_{12}\hat{\mathcal{R}}_{22}^{-1} \\
	0 & I\\
	\end{pmatrix}\begin{pmatrix}
	\hat{\mathcal{S}}_1 \\
	\hat{\mathcal{S}}_2
	\end{pmatrix}\\
	&=\hat{\mathcal{S}}_2^\top\hat{\mathcal{R}}_{22}^{-1}\hat{\mathcal{S}}_2+\big(	\hat{\mathcal{S}}_1-\hat{\mathcal{R}}_{12}\hat{\mathcal{R}}_{22}^{-1}\hat{\mathcal{S}}_2\big)^\top\big(\hat{\mathcal{R}}_{11}-\hat{\mathcal{R}}_{12}\hat{\mathcal{R}}_{22}^{-1}\hat{\mathcal{R}}_{12}^\top\big)^{-1}\big(	\hat{\mathcal{S}}_1-\hat{\mathcal{R}}_{12}\hat{\mathcal{R}}_{22}^{-1}\hat{\mathcal{S}}_2\big).\\
\end{aligned}
\end{equation*}
This completes the proof.
\end{proof}

Based on the above lemma, we introduce the following conditions for further derivation.
\textbf{Condition (\uppercase\expandafter{\romannumeral1}).}
\begin{equation*}
	\hat{\mathcal{R}}_{22}(s,i)\ll0,\quad \hat{\mathcal{R}}_{11}(s,i)-\hat{\mathcal{R}}_{12}(s,i)\hat{\mathcal{R}}_{22}(s,i)^{-1}\hat{\mathcal{R}}_{12}(s,i)^\top\gg0, \quad a.e.\; s\in[t,T],\quad i\in\mathcal{S}.
\end{equation*}
\textbf{Condition (\uppercase\expandafter{\romannumeral2}).}
\begin{equation*}
	\hat{\mathcal{R}}_{11}(s,i)\gg0,\quad \hat{\mathcal{R}}_{22}(s,i)-\hat{\mathcal{R}}_{12}(s,i)^\top\hat{\mathcal{R}}_{11}(s,i)^{-1}\hat{\mathcal{R}}_{12}(s,i)\ll0, \quad a.e.\; s\in[t,T],\quad i\in\mathcal{S}.
\end{equation*}
\textbf{Condition (\uppercase\expandafter{\romannumeral1}\&\uppercase\expandafter{\romannumeral2}).}
\begin{equation*}
	\hat{\mathcal{R}}_{11}(s,i)\gg0,\quad \hat{\mathcal{R}}_{22}(s,i)\ll0, \quad a.e.\; s\in[t,T],\quad i\in\mathcal{S}.
\end{equation*}

\begin{Remark}
It is obvious that both Condition (\uppercase\expandafter{\romannumeral1}) and Condition (\uppercase\expandafter{\romannumeral2}) hold true if and only if Condition (\uppercase\expandafter{\romannumeral1}$\&$\uppercase\expandafter{\romannumeral2}) holds.
\end{Remark}

\begin{mylem}\label{lem3}
Let (H1)-(H3) hold and $(\Pi(\cdot,\cdot),P(\cdot,\cdot))\in C([t,T]\times\mathcal{S};\mathbb{S}^n)\times C([t,T]\times\mathcal{S};\mathbb{S}^n)$ be the solution of the Riccati equations (\ref{Pi})-(\ref{P}) satisfying Condition (\uppercase\expandafter{\romannumeral1}). Then the admissible feedback control-strategy pair $(u^*(\cdot),\alpha_2^*(\cdot))$ of Player 1 is given by
\begin{equation}\label{OCS-1}
\left\{\begin{aligned}
	u^*(s)=&-\Big[\hat{\mathcal{R}}_{11}(s,\alpha(s))-\hat{\mathcal{R}}_{12}(s,\alpha(s))\hat{\mathcal{R}}_{22}(s,\alpha(s))^{-1}\hat{\mathcal{R}}_{12}(s,\alpha(s))^\top\Big]^{-1}\Bigl\{\Big[\hat{\mathcal{S}}_1(s,\alpha(s))\\
	&\qquad-\hat{\mathcal{R}}_{12}(s,\alpha(s))\hat{\mathcal{R}}_{22}(s,\alpha(s))^{-1}\hat{\mathcal{S}}_2(s,\alpha(s))\Big]\hat{x}(s)+B_1(s,\alpha(s))^\top\eta(s,\alpha(s))\\
	&\qquad+D_1(s,\alpha(s))^\top P(s,\alpha(s))\sigma(s,\alpha(s))+\bar{D}_1(s,\alpha(s))^\top \Pi(s,\alpha(s))\bar{\sigma}(s,\alpha(s))\\
	&\qquad+\rho_1(s,\alpha(s))-\hat{\mathcal{R}}_{12}(s,\alpha(s))\hat{\mathcal{R}}_{22}(s,\alpha(s))^{-1}\Big[B_2(s,\alpha(s))^\top\eta(s)+D_2(s,\alpha(s))^\top\\
	&\qquad\times P(s,\alpha(s))\sigma(s,\alpha(s))+\bar{D}_2(s,\alpha(s))^\top \Pi(s,\alpha(s))\bar{\sigma}(s,\alpha(s))+\rho_2(s,\alpha(s))\Big]\Bigr\},\\
	\hat{\alpha}_2^*(u)(s)=&-\hat{\mathcal{R}}_{22}(s,\alpha(s))^{-1}\Big[\hat{\mathcal{R}}_{12}(s,\alpha(s))^\top u(s)+\hat{\mathcal{S}}_2(s,\alpha(s))\hat{x}(s)+B_2(s,\alpha(s))^\top\eta(s,\alpha(s))\\
	&\qquad+D_2(s,\alpha(s))^\top P(s,\alpha(s))\sigma(s,\alpha(s))+\bar{D}_2(s,\alpha(s))^\top \Pi(s,\alpha(s))\bar{\sigma}(s,\alpha(s))\\
	&\qquad+\rho_2(s,\alpha(s))\Big],\\
	\tilde{v}^*(s)=&-\bar{\mathcal{R}}_2(s,\alpha(s))^{-1}\bar{\mathcal{S}}_2(s,\alpha(s))\tilde{x}(s),\quad
	\alpha_2^*(u)(s):=\hat{\alpha}_2^*(u)(s)+\tilde{v}^*(s),\quad s\in[t,T],\\
\end{aligned}\right.
\end{equation}
where $\eta(\cdot,\cdot)\in C([t,T]\times\mathcal{S};\mathbb{R}^n)$ is the solution of the following BODE:
\begin{equation}\label{eta1}
\left\{\begin{aligned}
	&\dot{\eta}(s,i)+A(s,i)^\top\eta(s,i)-\hat{\mathcal{S}}_2(s,i)^\top\hat{\mathcal{R}}_{22}(s,i)^{-1}\psi(s,i)-\big(\hat{\mathcal{S}}_1(s,i)-\hat{\mathcal{R}}_{12}(s,i)\hat{\mathcal{R}}_{22}(s,i)^{-1}\\
	&\quad\times\hat{\mathcal{S}}_2(s,i)\big)^\top\big(\hat{\mathcal{R}}_{11}(s,i)-\hat{\mathcal{R}}_{12}(s,i)\hat{\mathcal{R}}_{22}(s,i)^{-1}\hat{\mathcal{R}}_{12}(s,i)^\top\big)^{-1}\varphi(s,i)+C(s,i)^\top P(s,i)\sigma(s,i)\\
	&\quad+\bar{C}(s,i)^\top \Pi(s,i)\bar{\sigma}(s,i)+P(s,i)b(s,i)+q(s,i)+\sum_{j=1}^{D}\lambda_{ij}\eta(s,j)=0,\quad a.e.\; s\in[t,T],\\
	&\eta(T,i)=g(T,i),\quad i\in\mathcal{S},
\end{aligned}\right.
\end{equation}
and 
\begin{equation}
\begin{aligned}
	\psi(s,i)&:=B_2(s,i)^\top\eta(s,i)+D_2(s,i)^\top P(s,i)\sigma(s,i)+\bar{D}_2(s,i)^\top \Pi(s,i)\bar{\sigma}(s,i)+\rho_2(s,i),\\
	\varphi(s,i)&:=B_1(s,i)^\top\eta(s,i)+D_1(s,i)^\top P(s,i)\sigma(s,i)+\bar{D}_1(s,i)^\top \Pi(s,i)\bar{\sigma}(s,i)+\rho_1(s,i)\\
	&\quad -\hat{\mathcal{R}}_{12}(s,i)\hat{\mathcal{R}}_{22}(s,i)^{-1}\psi(s,i).
\end{aligned}
\end{equation}
Then we have the following:\\
(\romannumeral1) $J_\gamma(t,\xi,i;u(\cdot),\alpha_2(u)(\cdot))\leq J_\gamma(t,\xi,i;u(\cdot),\alpha_2^*(u)(\cdot))$ for any $u(\cdot)\in L_{\mathbb{G}}^2(t,T;\mathbb{R}^{m})$ and $\alpha_2(\cdot)\in\mathcal{A}_2[t,T]$. The equation
holds if and only if $\alpha_2(u)(\cdot)=\alpha_2^*(u)(\cdot)$.\\
(\romannumeral2) $J_\gamma(t,\xi,i;u^*(\cdot),\alpha_2^*(u^*)(\cdot))\leq J_\gamma(t,\xi,i;u(\cdot),\alpha_2^*(u)(\cdot))$ for any $u(\cdot)\in L_{\mathbb{G}}^2(t,T;\mathbb{R}^{m})$. The equation holds if and only if $u(\cdot)=u^*(\cdot)$.
\end{mylem}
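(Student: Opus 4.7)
The plan is to expand $J_\gamma$ using It\^o's formula applied to two carefully chosen quadratic functions of the state, then use completion-of-squares to recover the feedback forms. The key structural remark is that since $u(\cdot)$ is $\mathbb{G}$-adapted one has $\tilde u\equiv 0$, so the decomposition $x=\hat x+\tilde x$, $v=\hat v+\tilde v$ combined with Lemma \ref{lem1} cleanly splits $J_\gamma(t,\xi,i;u,v)$ into two parts: a ``residual'' part depending only on $(\tilde x,\tilde v)$, to be handled via $\Pi$, and a ``filtered'' part depending on $(u,\hat x,\hat v)$, to be handled via $P$ and $\eta$. All cross expectations vanish thanks to Lemma \ref{lem1}.

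First I would handle the residual part by applying It\^o's formula to $\langle \Pi(s,\alpha(s))\tilde x(s),\tilde x(s)\rangle$ along (\ref{tilde_x}) and taking expectation, which kills the $dW$, $d\overline W$ and $d\widetilde N_{ij}$ martingale terms. Substituting the Riccati equation (\ref{Pi}) for $\dot\Pi$ and simplifying, the residual integrand reduces to a quadratic form in $\tilde v$ with Hessian $\bar{\mathcal R}_2\ll 0$. Completing the square yields the maximizer $\tilde v^*(s)=-\bar{\mathcal R}_2^{-1}\bar{\mathcal S}_2\tilde x(s)$ and rewrites the residual part as a $u$-independent quantity plus a nonpositive correction that vanishes iff $\tilde v=\tilde v^*$.

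Next, for the filtered part, I would apply It\^o to $\langle P(s,\alpha(s))\hat x(s),\hat x(s)\rangle+2\langle \eta(s,\alpha(s)),\hat x(s)\rangle$ along (\ref{hat_x}), take expectation, and substitute the Riccati equation (\ref{P}) and the BODE (\ref{eta1}). Invoking the identity of Lemma \ref{lem2}(\romannumeral1) allows me to split the resulting $\hat{\mathcal S}^\top\hat{\mathcal R}^{-1}\hat{\mathcal S}$ term into an $\hat{\mathcal S}_2^\top\hat{\mathcal R}_{22}^{-1}\hat{\mathcal S}_2$ piece (to be absorbed into a square in $\hat v$) and a second piece (to be absorbed into a square in $u$). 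For assertion (\romannumeral1), with $u$ held fixed, I complete the square in $\hat v$ alone: since the Hessian is $\hat{\mathcal R}_{22}\ll 0$, the quadratic is strictly concave and its unique maximizer (as a function of $u$) is exactly the $\hat\alpha_2^*(u)$ in (\ref{OCS-1}). Combined with the residual analysis, the first inequality follows with equality iff $\alpha_2(u)=\alpha_2^*(u)$. For assertion (\romannumeral2), substituting $\hat v=\hat\alpha_2^*(u)$ into what remains leaves a quadratic form in $u$ with effective Hessian $\hat{\mathcal R}_{11}-\hat{\mathcal R}_{12}\hat{\mathcal R}_{22}^{-1}\hat{\mathcal R}_{12}^\top\gg 0$; completing this square yields the stated $u^*$ and equality iff $u=u^*$.

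The main obstacle I anticipate is the bookkeeping in the first It\^o step: the $d\overline W$ coefficient in (\ref{tilde_x}) involves $x$ and $v$ in full (not $\tilde x,\tilde v$), so the quadratic-variation term $[\bar Cx+\bar D_1 u+\bar D_2 v+\bar\sigma]^\top\Pi[\cdot]$ must be re-split via $x=\hat x+\tilde x$ and $v=\hat v+\tilde v$, and Lemma \ref{lem1} must be applied repeatedly to eliminate every $(\hat x,u,\hat v)$-dependent cross expectation so that the residual integrand genuinely depends only on $(\tilde x,\tilde v)$. The Markov-chain jumps contribute $\sum_j\lambda_{ij}\Pi(s,j)$ and $\sum_j\lambda_{ij}P(s,j)$ terms through the compensators $\widetilde N_{ij}$, which are exactly cancelled by the corresponding terms built into (\ref{Pi}), (\ref{P}) and (\ref{eta1}). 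Once this reduction is in place, the remaining completion-of-squares and identification of $(u^*,\hat\alpha_2^*(u),\tilde v^*)$ is routine.
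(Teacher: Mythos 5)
Your proposal follows essentially the same route as the paper's proof: It\^o's formula applied to $\langle\Pi(s,\alpha(s))\tilde{x}(s),\tilde{x}(s)\rangle$ and $\langle P(s,\alpha(s))\hat{x}(s)+2\eta(s,\alpha(s)),\hat{x}(s)\rangle$, the orthogonal split of $J_\gamma$ into a $(\tilde{x},\tilde{v})$-part and a $(u,\hat{x},\hat{v})$-part via Lemma \ref{lem1}, the identity of Lemma \ref{lem2}(\romannumeral1), and sequential completion of squares in $\tilde{v}$, $\hat{v}$ and $u$ with the sign conditions $\bar{\mathcal{R}}_2\ll0$ and Condition (\uppercase\expandafter{\romannumeral1}) delivering the two assertions and the equality characterizations. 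You also correctly anticipate the one genuinely delicate bookkeeping point --- that the $d\overline{W}$ quadratic-variation term must be re-split so its $(\hat{x},u,\hat{v})$-dependent part migrates into the filtered functional (which is why $\hat{\mathcal{R}}$ and $\hat{\mathcal{S}}$ carry $\Pi$ rather than $P$ in their $\bar{D}$, $\bar{C}$ entries) --- so the plan is sound and matches the paper.
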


\begin{proof}
Applying It\^{o}'s formula (\cite{Mao07}) to $s\mapsto\langle\Pi(s,\alpha(s))\tilde{x}(s),\tilde{x}(s)\rangle$ and $s\mapsto\langle P(s,\alpha(s))\hat{x}(s)+2\eta(s,\alpha(s)),\hat{x}(s)\rangle$, respectively, we have
\begin{equation*}
\begin{aligned}
	&d\langle\Pi(s,\alpha(s))\tilde{x}(s),\tilde{x}(s)\rangle\\
	&=\Big[\langle\dot{\Pi}(s,\alpha(s))\tilde{x}(s),\tilde{x}(s)\rangle+\sum_{j=1}^{D}\lambda_{\alpha(s-)j}\big\langle(\Pi(s,j)-\Pi(s,\alpha(s-)))\tilde{x}(s),\tilde{x}(s)\big\rangle\\
	&\qquad+\langle\Pi(s,\alpha(s))\big(A(s,\alpha(s))\tilde{x}(s)+B_2(s,\alpha(s))\tilde{v}(s)\big),\tilde{x}(s)\rangle\\
	&\qquad+\langle\Pi(s,\alpha(s))\tilde{x}(s),A(s,\alpha(s))\tilde{x}(s)+B_2(s,\alpha(s))\tilde{v}(s)\rangle\\
	&\qquad+\langle\Pi(s,\alpha(s))\big(C(s,\alpha(s))\tilde{x}(s)+D_2(s,\alpha(s))\tilde{v}(s)\big),C(s,\alpha(s))\tilde{x}(s)+D_2(s,\alpha(s))\tilde{v}(s)\rangle\\
	&\qquad+\langle\Pi(s,\alpha(s))\big(\bar{C}(s,\alpha(s))x(s)+\bar{D}_1(s,\alpha(s))u(s)+\bar{D}_2(s,\alpha(s))v(s)+\bar{\sigma}(s,\alpha(s))\big),\\
	&\qquad\qquad\bar{C}(s,\alpha(s))x(s)+\bar{D}_1(s,\alpha(s))u(s)+\bar{D}_2(s,\alpha(s))v(s)+\bar{\sigma}(s,\alpha(s))\rangle\Big]ds\\
	&\quad+\sum_{i,j=1}^{D}\big\langle(\Pi(s,j)-\Pi(s,i))\tilde{x}(s),\tilde{x}(s)\big\rangle d\tilde{N}_{ij}(s)+\big[\cdots\big]dW(s)+\big[\cdots\big]d\overline{W}(s),\\
	&d\langle P(s,\alpha(s))\hat{x}(s)+2\eta(s,\alpha(s)),\hat{x}(s)\rangle\\
	&=\Big[\langle\dot{P}(s,\alpha(s))\hat{x}(s),\hat{x}(s)\rangle+\sum_{j=1}^{D}\lambda_{\alpha(s-)j}\big\langle(P(s,j)-P(s,\alpha(s-)))\hat{x}(s),\hat{x}(s)\big\rangle\\
	&\qquad+\langle P(s,\alpha(s))\big(A(s,\alpha(s))\hat{x}(s)+B_1(s,\alpha(s))u(s)+B_2(s,\alpha(s))\hat{v}(s)+b(s,\alpha(s))\big),\hat{x}(s)\rangle\\
	&\qquad+\langle P(s,\alpha(s))\hat{x}(s),A(s,\alpha(s))\hat{x}(s)+B_1(s,\alpha(s))u(s)+B_2(s,\alpha(s))\hat{v}(s)+b(s,\alpha(s))\rangle\\
	&\qquad+\langle P(s,\alpha(s))\big(C(s,\alpha(s))\hat{x}(s)+D_1(s,\alpha(s))u(s)+D_2(s,\alpha(s))\hat{v}(s)+\sigma(s,\alpha(s))\big),\\
	&\qquad\qquad C(s,\alpha(s))\hat{x}(s)+D_1(s,\alpha(s))u(s)+D_2(s,\alpha(s))\hat{v}(s)+\sigma(s,\alpha(s))\rangle\\
	&\qquad+2\langle \dot{\eta}(s,\alpha(s)),\hat{x}(s)\rangle+2\sum_{j=1}^{D}\lambda_{\alpha(s-)j}\big\langle\eta(s,j)-\eta(s,\alpha(s-)),\hat{x}(s)\big\rangle\\
    &\qquad+2\langle \eta(s,\alpha(s)),A(s,\alpha(s))\hat{x}(s)+B_1(s,\alpha(s))u(s)+B_2(s,\alpha(s))\hat{v}(s)+b(s,\alpha(s))\rangle \Big]ds\\
\end{aligned}
\end{equation*}
\begin{equation*}
\begin{aligned}	
	&\quad\;+\sum_{i,j=1}^{D}\big\langle(P(s,j)-P(s,i))\hat{x}(s)+2(\eta(s,j)-\eta(s,i)),\hat{x}(s)\big\rangle d\tilde{N}_{ij}(s)+\big[\cdots\big]dW(s).
\end{aligned}
\end{equation*}
Integrating from $t$ to $T$, taking expectation, and substituting them into the cost functional $J_\gamma(t,\xi,i;u(\cdot),v(\cdot))$, by simplification, we obtain
\begin{equation*}
\begin{aligned}
	&J_\gamma(t,\xi,i;u(\cdot),v(\cdot))=\mathbb{E}\bigg\{\langle \Pi(t,i)\tilde{\xi},\tilde{\xi}\rangle+\int_t^T\Big[\langle\bar{\mathcal{R}}_2(s,\alpha(s))\tilde{v}(s),\tilde{v}(s)\rangle+2\langle \bar{\mathcal{S}}_2(s,\alpha(s))\tilde{x}(s),\tilde{v}(s)\rangle\\
	&\quad+\langle \bar{\mathcal{S}}_2(s,\alpha(s))^\top\bar{\mathcal{R}}_2(s,\alpha(s))^{-1}\bar{\mathcal{S}}_2(s,\alpha(s))\tilde{x}(s),\tilde{x}(s)\rangle
     +\langle\Pi(s,\alpha(s))\bar{\sigma}(s,\alpha(s)),\bar{\sigma}(s,\alpha(s))\rangle\Big]ds\bigg\}\\
	&\quad+\mathbb{E}\bigg\{\langle P(t,i)\hat{\xi},\hat{\xi}\rangle+2\langle \eta(t,i),\hat{\xi}\rangle
     +\int_t^T\Big[\langle \hat{\mathcal{S}}(s,\alpha(s))^\top\hat{\mathcal{R}}(s,\alpha(s))^{-1}\hat{\mathcal{S}}(s,\alpha(s))\hat{x}(s),\hat{x}(s)\rangle\\
	&\qquad\qquad+\langle \hat{\mathcal{R}}_{11}(s,\alpha(s))u(s),u(s)\rangle +2\langle\hat{\mathcal{S}}_1(s,\alpha(s))\hat{x}(s),u(s)\rangle+\langle\hat{\mathcal{R}}_{22}(s,\alpha(s))\hat{v}(s),\hat{v}(s)\rangle\\
	&\qquad\qquad+2\langle\hat{\mathcal{S}}_2(s,\alpha(s))\hat{x}(s),\hat{v}(s)+2\langle\hat{\mathcal{R}}_{12}(s,\alpha(s))^\top u(s),\hat{v}(s)\rangle\\ 
    &\qquad\qquad+2\langle\Big(\hat{\mathcal{S}}_2^\top\hat{\mathcal{R}}_{22}^{-1}\psi+\big(\hat{\mathcal{S}}_1^\top
     -\hat{\mathcal{R}}_{12}\hat{\mathcal{R}}_{22}^{-1}\hat{\mathcal{S}}_2\big)^\top\big(\hat{\mathcal{R}}_{11}-\hat{\mathcal{R}}_{12}\hat{\mathcal{R}}_{22}^{-1}\hat{\mathcal{R}}_{12}^\top\big)^{-1}\varphi\Big)(s,\alpha(s)),\hat{x}(s)\rangle \\
	&\qquad\qquad+2\langle\varphi(s,\alpha(s))+\hat{\mathcal{R}}_{12}(s,\alpha(s))\hat{\mathcal{R}}_{22}(s,\alpha(s))^{-1}\psi(s,\alpha(s)),u(s)\rangle+2\langle\psi(s,\alpha(s)),\hat{v}(s)\rangle\\
	&\qquad\qquad+\langle P(s,\alpha(s))\sigma(s,\alpha(s)),\sigma(s,\alpha(s))\rangle+2\langle\eta(s,\alpha(s)),b(s,\alpha(s))\rangle \Big]ds\bigg\}\\
	&:=\tilde{J}_\gamma(t,\tilde{\xi},i;\tilde{v}(\cdot))+\hat{J}_\gamma(t,\hat{\xi},i;u(\cdot),\hat{v}(\cdot)).
\end{aligned}
\end{equation*}
Using completion-of-squares for $\tilde{v}(\cdot)$ in the functional $\tilde{J}_\gamma(t,\tilde{\xi},i;\tilde{v}(\cdot))$, we get (the argument $(s,\alpha(s))$ is suppressed for simplicity)
\begin{equation}\label{tilde-J}
\begin{aligned}
	\tilde{J}_\gamma(t,\tilde{\xi},i;\tilde{v}(\cdot))=\mathbb{E}\bigg\{\langle\Pi(t,i)\tilde{\xi},\tilde{\xi}\rangle+\int_{t}^{T}\Big[\langle\bar{\mathcal{R}}_2\big(\tilde{v}
    +\bar{\mathcal{R}}_2^{-1}\bar{\mathcal{S}}_2\tilde{x}\big),\tilde{v}+\bar{\mathcal{R}}_2^{-1}\bar{\mathcal{S}}_2\tilde{x}\rangle+\langle\Pi\bar{\sigma},\bar{\sigma}\rangle\Big]ds\bigg\}.
\end{aligned}
\end{equation}

For $\hat{J}_\gamma(t,\hat{\xi},i;u(\cdot),\hat{v}(\cdot))$, using completion-of-squares for $\hat{v}(\cdot)$, we have
\begin{equation*}
\begin{aligned}
	&\hat{J}_\gamma(t,\hat{\xi},i;u(\cdot),\hat{v}(\cdot))=\mathbb{E}\bigg\{\langle P(t,i)\hat{\xi}+2\eta(t,i),\hat{\xi}\rangle
     +\int_t^T\Big[\langle \hat{\mathcal{S}}^\top\hat{\mathcal{R}}^{-1}\hat{\mathcal{S}}\hat{x},\hat{x}\rangle+\langle \hat{\mathcal{R}}_{11}u,u\rangle\\
	&\quad +2\langle\hat{\mathcal{S}}_1\hat{x},u\rangle+\langle\hat{\mathcal{R}}_{22}\big(\hat{v}+\hat{\mathcal{R}}_{22}^{-1}\hat{\mathcal{R}}_{12}^\top u
     +\hat{\mathcal{R}}_{22}^{-1}\hat{\mathcal{S}}_2\hat{x}\big),\hat{v}+\hat{\mathcal{R}}_{22}^{-1}\hat{\mathcal{R}}_{12}^\top u+\hat{\mathcal{R}}_{22}^{-1}\hat{\mathcal{S}}_2\hat{x}\rangle\\ 
    &\quad-2\langle\hat{\mathcal{R}}_{12}\hat{\mathcal{R}}_{22}^{-1}\hat{\mathcal{S}}_2\hat{x},u\rangle-\langle\hat{\mathcal{S}}_2^\top\hat{\mathcal{R}}_{22}^{-1}\hat{\mathcal{S}}_2\hat{x},\hat{x}\rangle
     -\langle\hat{\mathcal{R}}_{12}\hat{\mathcal{R}}_{22}^{-1}\hat{\mathcal{R}}_{12}^\top u,u\rangle\\ 
    &\quad+2\langle\hat{\mathcal{S}}_2^\top\hat{\mathcal{R}}_{22}^{-1}\psi+\big(\hat{\mathcal{S}}_1^\top-\hat{\mathcal{R}}_{12}\hat{\mathcal{R}}_{22}^{-1}\hat{\mathcal{S}}_2\big)^\top\big(\hat{\mathcal{R}}_{11}
     -\hat{\mathcal{R}}_{12}\hat{\mathcal{R}}_{22}^{-1}\hat{\mathcal{R}}_{12}^\top\big)^{-1}\varphi ,\hat{x}\rangle \\
	&\quad+2\langle\varphi+\hat{\mathcal{R}}_{12}\hat{\mathcal{R}}_{22}^{-1}\psi,u\rangle+2\langle\psi,\hat{v}\rangle+\langle P\sigma,\sigma\rangle+2\langle\eta,b\rangle \Big]ds\bigg\}.
\end{aligned}
\end{equation*}
Then, with the help of Lemma \ref{lem2} (\romannumeral1), substituting (\ref{form1}) into the above equation, we obtain
\begin{equation*}
\begin{aligned}
	&\hat{J}_\gamma(t,\hat{\xi},i;u(\cdot),\hat{v}(\cdot))=\mathbb{E}\bigg\{\langle P(t,i)\hat{\xi}+2\eta(t,i),\hat{\xi}\rangle
     +\int_t^T\Big[\langle\big( \hat{\mathcal{R}}_{11}-\hat{\mathcal{R}}_{12}\hat{\mathcal{R}}_{22}^{-1}\hat{\mathcal{R}}_{12}^\top\big)u,u\rangle\\
	&\quad+2\langle\big(\hat{\mathcal{S}}_1-\hat{\mathcal{R}}_{12}\hat{\mathcal{R}}_{22}^{-1}\hat{\mathcal{S}}_2\big)\hat{x},u\rangle
      +\langle\big(\hat{\mathcal{S}}_1-\hat{\mathcal{R}}_{12}\hat{\mathcal{R}}_{22}^{-1}\hat{\mathcal{S}}_2\big)^\top\big(\hat{\mathcal{R}}_{11}
      -\hat{\mathcal{R}}_{12}\hat{\mathcal{R}}_{22}^{-1}\hat{\mathcal{R}}_{12}^\top\big)^{-1}\big(\hat{\mathcal{S}}_1\\
\end{aligned}
\end{equation*}
\begin{equation*}
\begin{aligned}
	&\quad-\hat{\mathcal{R}}_{12}\hat{\mathcal{R}}_{22}^{-1}\hat{\mathcal{S}}_2\big)\hat{x},\hat{x}\rangle+\langle\hat{\mathcal{R}}_{22}\big(\hat{v}+\hat{\mathcal{R}}_{22}^{-1}
     \hat{\mathcal{R}}_{12}^\top u+\hat{\mathcal{R}}_{22}^{-1}\hat{\mathcal{S}}_2\hat{x}\big),\hat{v}+\hat{\mathcal{R}}_{22}^{-1}\hat{\mathcal{R}}_{12}^\top u+\hat{\mathcal{R}}_{22}^{-1}\hat{\mathcal{S}}_2\hat{x}\rangle\\
	&\quad+2\langle\Big(\hat{\mathcal{S}}_2^\top\hat{\mathcal{R}}_{22}^{-1}\psi+\big(\hat{\mathcal{S}}_1^\top-\hat{\mathcal{R}}_{12}\hat{\mathcal{R}}_{22}^{-1}\hat{\mathcal{S}}_2\big)^\top\big(\hat{\mathcal{R}}_{11}
     -\hat{\mathcal{R}}_{12}\hat{\mathcal{R}}_{22}^{-1}\hat{\mathcal{R}}_{12}^\top\big)^{-1}\varphi\Big) ,\hat{x}\rangle \\
	&\quad+2\langle\varphi+\hat{\mathcal{R}}_{12}\hat{\mathcal{R}}_{22}^{-1}\psi,u\rangle+2\langle\psi,\hat{v}\rangle+\langle P\sigma,\sigma\rangle+2\langle\eta,b\rangle \Big]ds\bigg\}.
\end{aligned}
\end{equation*}
Continue completing the square for $\hat{v}(\cdot)$, we have
\begin{equation*}
\begin{aligned}
	&\hat{J}_\gamma(t,\hat{\xi},i;u(\cdot),\hat{v}(\cdot))=\mathbb{E}\bigg\{\langle P(t,i)\hat{\xi}+2\eta(t,i),\hat{\xi}\rangle
     +\int_t^T\Big[\langle\big( \hat{\mathcal{R}}_{11}-\hat{\mathcal{R}}_{12}\hat{\mathcal{R}}_{22}^{-1}\hat{\mathcal{R}}_{12}^\top\big)u,u\rangle\\
    &\quad+2\langle\big(\hat{\mathcal{S}}_1-\hat{\mathcal{R}}_{12}\hat{\mathcal{R}}_{22}^{-1}\hat{\mathcal{S}}_2\big)\hat{x},u\rangle
     +\langle \big(\hat{\mathcal{S}}_1-\hat{\mathcal{R}}_{12}\hat{\mathcal{R}}_{22}^{-1}\hat{\mathcal{S}}_2\big)^\top\big(\hat{\mathcal{R}}_{11}
     -\hat{\mathcal{R}}_{12}\hat{\mathcal{R}}_{22}^{-1}\hat{\mathcal{R}}_{12}^\top\big)^{-1}\big(\hat{\mathcal{S}}_1\\
    &\quad-\hat{\mathcal{R}}_{12}\hat{\mathcal{R}}_{22}^{-1}\hat{\mathcal{S}}_2\big)\hat{x},\hat{x}\rangle
     +\langle\hat{\mathcal{R}}_{22}\big(\hat{v}+\hat{\mathcal{R}}_{22}^{-1}\hat{\mathcal{R}}_{12}^\top u+\hat{\mathcal{R}}_{22}^{-1}\hat{\mathcal{S}}_2\hat{x}+\hat{\mathcal{R}}_{22}^{-1}\psi\big),
     \hat{v}+\hat{\mathcal{R}}_{22}^{-1}\hat{\mathcal{R}}_{12}^\top u\\
	&\quad+\hat{\mathcal{R}}_{22}^{-1}\hat{\mathcal{S}}_2\hat{x}+\hat{\mathcal{R}}_{22}^{-1}\psi\rangle
      +2\langle\big(\hat{\mathcal{S}}_1^\top-\hat{\mathcal{R}}_{12}\hat{\mathcal{R}}_{22}^{-1}\hat{\mathcal{S}}_2\big)^\top\big(\hat{\mathcal{R}}_{11}
      -\hat{\mathcal{R}}_{12}\hat{\mathcal{R}}_{22}^{-1}\hat{\mathcal{R}}_{12}^\top\big)^{-1}\varphi ,\hat{x}\rangle\\
	&\quad+2\langle\varphi,u\rangle-\langle\hat{\mathcal{R}}_{22}^{-1}\psi,\psi\rangle+\langle P\sigma,\sigma\rangle+2\langle\eta,b\rangle \Big]ds\bigg\}.
\end{aligned}
\end{equation*}
Next, the two-step complication-of-squares procedure for $u(\cdot)$ is implemented,
\begin{equation}\label{hat-J}
\begin{aligned}
	&\hat{J}_\gamma(t,\hat{\xi},i;u(\cdot),\hat{v}(\cdot))\\
	&=\mathbb{E}\bigg\{\langle P(t,i)\hat{\xi}+2\eta(t,i),\hat{\xi}\rangle+\int_t^T\Big[-\langle\hat{\mathcal{R}}_{22}^{-1}\psi,\psi\rangle+\langle P\sigma,\sigma\rangle+2\langle\eta,b\rangle\\
	&\qquad\quad+\big\vert u+( \hat{\mathcal{R}}_{11}-\hat{\mathcal{R}}_{12}\hat{\mathcal{R}}_{22}^{-1}\hat{\mathcal{R}}_{12}^\top)^{-1}(\hat{\mathcal{S}}_1-\hat{\mathcal{R}}_{12}\hat{\mathcal{R}}_{22}^{-1}\hat{\mathcal{S}}_2)\hat{x}
     \big\vert_{(\hat{\mathcal{R}}_{11}-\hat{\mathcal{R}}_{12}\hat{\mathcal{R}}_{22}^{-1}\hat{\mathcal{R}}_{12}^\top)}^2\\
	&\qquad\quad+\big\vert\hat{v}+\hat{\mathcal{R}}_{22}^{-1}\hat{\mathcal{R}}_{12}^\top u+\hat{\mathcal{R}}_{22}^{-1}\hat{\mathcal{S}}_2\hat{x}+\hat{\mathcal{R}}_{22}^{-1}\psi\big\vert_{\hat{\mathcal{R}}_{22}}^2\\
	&\qquad\quad+2\langle\big(\hat{\mathcal{S}}_1^\top-\hat{\mathcal{R}}_{12}\hat{\mathcal{R}}_{22}^{-1}\hat{\mathcal{S}}_2\big)^\top\big(\hat{\mathcal{R}}_{11}-\hat{\mathcal{R}}_{12}
     \hat{\mathcal{R}}_{22}^{-1}\hat{\mathcal{R}}_{12}^\top\big)^{-1}\varphi ,\hat{x}\rangle+2\langle\varphi,u\rangle \Big]ds\bigg\}\\
	&=\mathbb{E}\bigg\{\langle P(t,i)\hat{\xi}+2\eta(t,i),\hat{\xi}\rangle+\int_t^T\Big[-\langle\hat{\mathcal{R}}_{22}^{-1}\psi,\psi\rangle+\langle P\sigma,\sigma\rangle+2\langle\eta,b\rangle\\
	&\qquad\quad+\big\vert u+(\hat{\mathcal{R}}_{11}-\hat{\mathcal{R}}_{12}\hat{\mathcal{R}}_{22}^{-1}\hat{\mathcal{R}}_{12}^\top)^{-1}\big((\hat{\mathcal{S}}_1-\hat{\mathcal{R}}_{12}
     \hat{\mathcal{R}}_{22}^{-1}\hat{\mathcal{S}}_2)\hat{x}+\varphi\big)\big\vert_{( \hat{\mathcal{R}}_{11}-\hat{\mathcal{R}}_{12}\hat{\mathcal{R}}_{22}^{-1}\hat{\mathcal{R}}_{12}^\top)}^2\\
	&\qquad\quad-\langle(\hat{\mathcal{R}}_{11}-\hat{\mathcal{R}}_{12}\hat{\mathcal{R}}_{22}^{-1}\hat{\mathcal{R}}_{12}^\top)^{-1}\varphi,\varphi\rangle+\vert\hat{v}+\hat{\mathcal{R}}_{22}^{-1}
     \hat{\mathcal{R}}_{12}^\top u+\hat{\mathcal{R}}_{22}^{-1}\hat{\mathcal{S}}_2\hat{x}+\hat{\mathcal{R}}_{22}^{-1}\psi\vert_{\hat{\mathcal{R}}_{22}}^2\Big]ds\bigg\}.\\
\end{aligned}
\end{equation}
Therefore, adding equation (\ref{tilde-J}) and equation (\ref{hat-J}) together, we have
\begin{equation*}
\begin{aligned}
	&J_\gamma(t,\xi,i;u(\cdot),v(\cdot))=\tilde{J}_\gamma(t,\tilde{\xi},i;\tilde{v}(\cdot))+\hat{J}_\gamma(t,\hat{\xi},i;u(\cdot),\hat{v}(\cdot))\\
	&=\mathbb{E}\bigg\{\langle \Pi(t,i)\tilde{\xi},\tilde{\xi}\rangle+\langle P(t,i)\hat{\xi}+2\eta(t,i),\hat{\xi}\rangle+\int_{t}^{T}\Big[\langle\bar{\mathcal{R}}_2
     \big(\tilde{v}+\bar{\mathcal{R}}_2^{-1}\bar{\mathcal{S}}_2\tilde{x}\big),\tilde{v}+\bar{\mathcal{R}}_2^{-1}\bar{\mathcal{S}}_2\tilde{x}\rangle\\
	&\qquad\quad+\big\vert u+(\hat{\mathcal{R}}_{11}-\hat{\mathcal{R}}_{12}\hat{\mathcal{R}}_{22}^{-1}\hat{\mathcal{R}}_{12}^\top)^{-1}\big((\hat{\mathcal{S}}_1-\hat{\mathcal{R}}_{12}\hat{\mathcal{R}}_{22}^{-1}
     \hat{\mathcal{S}}_2)\hat{x}+\varphi\big)\big\vert_{(\hat{\mathcal{R}}_{11}-\hat{\mathcal{R}}_{12}\hat{\mathcal{R}}_{22}^{-1}\hat{\mathcal{R}}_{12}^\top)}^2\\
	&\qquad\quad+\big\vert\hat{v}+\hat{\mathcal{R}}_{22}^{-1}\hat{\mathcal{R}}_{12}^\top u+\hat{\mathcal{R}}_{22}^{-1}\hat{\mathcal{S}}_2\hat{x}+\hat{\mathcal{R}}_{22}^{-1}\psi
     \big\vert_{\hat{\mathcal{R}}_{22}}^2-\langle( \hat{\mathcal{R}}_{11}-\hat{\mathcal{R}}_{12}\hat{\mathcal{R}}_{22}^{-1}\hat{\mathcal{R}}_{12}^\top)^{-1}\varphi,\varphi\rangle\\
	&\qquad\quad-\langle\hat{\mathcal{R}}_{22}^{-1}\psi,\psi\rangle+\langle P\sigma,\sigma\rangle+\langle\Pi\bar{\sigma},\bar{\sigma}\rangle+2\langle\eta,b\rangle\Big]ds\bigg\}\\
\end{aligned}
\end{equation*}
\begin{equation}\label{J}
\begin{aligned}
	&=\mathbb{E}\bigg\{\langle \Pi(t,i)\tilde{\xi},\tilde{\xi}\rangle+\langle P(t,i)\hat{\xi}+2\eta(t,i),\hat{\xi}\rangle+\int_{t}^{T}\Big[\langle\bar{\mathcal{R}}_2(\tilde{v}-\tilde{v}^*),\tilde{v}-\tilde{v}^*\rangle\\
	&\qquad\quad+\langle(\hat{\mathcal{R}}_{11}-\hat{\mathcal{R}}_{12}\hat{\mathcal{R}}_{22}^{-1}\hat{\mathcal{R}}_{12}^\top)(u-u^*),u-u^*\rangle+\langle\hat{\mathcal{R}}_{22}(\hat{v}-\hat{\alpha}_2^*(u)),\hat{v}-\hat{\alpha}_2^*(u)\rangle\\
	&\qquad\quad-\langle(\hat{\mathcal{R}}_{11}-\hat{\mathcal{R}}_{12}\hat{\mathcal{R}}_{22}^{-1}\hat{\mathcal{R}}_{12}^\top)^{-1}\varphi,\varphi\rangle-\langle\hat{\mathcal{R}}_{22}^{-1}\psi,\psi\rangle
     +\langle P\sigma,\sigma\rangle+\langle\Pi\bar{\sigma},\bar{\sigma}\rangle+2\langle\eta,b\rangle\Big]ds\bigg\}.
\end{aligned}
\end{equation}
From Condition (\uppercase\expandafter{\romannumeral1}) and the uniform negative definiteness of matrix $\bar{\mathcal{R}}_2(s,i)$, for any $u(\cdot)\in L_{\mathbb{G}}^2(t,T;\mathbb{R}^{m})$ and $\alpha_2(u)(\cdot)=\hat{\alpha}_2(u)(\cdot)+\tilde{v}(\cdot)\in\mathcal{A}_2[t,T]$, we have 
\begin{equation*}
	J_\gamma(t,\xi,i;u(\cdot),\alpha_2(u)(\cdot))\leq J_\gamma(t,\xi,i;u(\cdot),\alpha_2^*(u)(\cdot)).\\
\end{equation*}
The equality in the above inequality holds if and only if $\hat{\alpha}_2(u)(\cdot)=\hat{\alpha}_2^*(u)(\cdot)$ and $\tilde{v}(\cdot)=\tilde{v}^*(\cdot)$, i.e., $\alpha_2(u)(\cdot)=\alpha_2^*(u)(\cdot)$, which is the first assertion of the lemma. Similarly, by virtue of the uniform positive definiteness of matrix $\hat{\mathcal{R}}_{11}-\hat{\mathcal{R}}_{12}\hat{\mathcal{R}}_{22}^{-1}\hat{\mathcal{R}}_{12}^\top$, we obtain the second assertion of the lemma and finish the proof.
\end{proof}

\begin{mythm}\label{Thm1}
Let (H1)-(H3) hold, for any disturbance attenuation level $\gamma>\gamma^*$, assume the Riccati equations (\ref{Pi})-(\ref{P}) admit a solution $(\Pi(\cdot,\cdot),P(\cdot,\cdot))\in C([t,T]\times\mathcal{S};\mathbb{S}^n)\times C([t,T]\times\mathcal{S};\mathbb{S}^n)$ satisfying Condition (\uppercase\expandafter{\romannumeral1}). Then, for any $(\xi,i)\in L_{\mathcal{F}_t}^2(\Omega;\mathbb{R}^n)\times\mathcal{S}$, Problem (SCG-1) admits an optimal control-strategy pair $(u^*(\cdot),\alpha_2^*(\cdot))\in L_{\mathbb{G}}^2(t,T;\mathbb{R}^{m})\times\mathcal{A}_2[t,T]$ for Player 1's value, which is in a feedback form and defined by (\ref{OCS-1}). Moreover, Player 1's value is given by
\begin{equation}\label{value1}
\begin{aligned}
	&J_\gamma(t,\xi,i;u^*(\cdot),\alpha_2^*(u^*)(\cdot))=\mathbb{E}\bigg\{\langle \Pi(t,i)\tilde{\xi},\tilde{\xi}\rangle+\langle P(t,i)\hat{\xi},\hat{\xi}\rangle+2\langle\eta(t,i),\hat{\xi}\rangle\\
	&\quad+\int_t^T\Big[\langle P\sigma,\sigma\rangle+\langle\Pi\bar{\sigma},\bar{\sigma}\rangle+2\langle\eta,b\rangle-\langle( \hat{\mathcal{R}}_{11}-\hat{\mathcal{R}}_{12}\hat{\mathcal{R}}_{22}^{-1}\hat{\mathcal{R}}_{12}^\top)^{-1}\varphi,\varphi\rangle-\langle\hat{\mathcal{R}}_{22}^{-1}\psi,\psi\rangle\Big]ds\bigg\}.\\
\end{aligned}
\end{equation}
\end{mythm}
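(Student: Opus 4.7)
The theorem is essentially a packaging of Lemma \ref{lem3}, whose two inequalities already identify $(u^*,\alpha_2^*)$ as the candidate optimal pair and whose proof supplies the completion-of-squares identity (\ref{J}). The plan has three steps: first, verify that the feedback formulas (\ref{OCS-1}) produce an admissible pair $(u^*,\alpha_2^*)\in L_{\mathbb{G}}^2(t,T;\mathbb{R}^m)\times\mathcal{A}_2[t,T]$; second, combine Lemma \ref{lem3}(i) and (ii) into a minimax equality that establishes (\ref{CS-1}); and third, substitute $(u^*,\alpha_2^*(u^*))$ into (\ref{J}) to read off (\ref{value1}).

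For the minimax step I would fix an arbitrary $\alpha_2\in\mathcal{A}_2[t,T]$ and invoke Lemma \ref{lem3}(i) to obtain $J_\gamma(t,\xi,i;u,\alpha_2(u))\leq J_\gamma(t,\xi,i;u,\alpha_2^*(u))$ for every $u\in L_{\mathbb{G}}^2(t,T;\mathbb{R}^m)$. Taking infimum over $u$ and then applying Lemma \ref{lem3}(ii) would give
\[
\inf_{u} J_\gamma(t,\xi,i;u,\alpha_2(u))\leq \inf_{u} J_\gamma(t,\xi,i;u,\alpha_2^*(u))=J_\gamma(t,\xi,i;u^*,\alpha_2^*(u^*)).
\]
Taking supremum over $\alpha_2\in\mathcal{A}_2[t,T]$ yields $\sup_{\alpha_2}\inf_u J_\gamma\leq J_\gamma(t,\xi,i;u^*,\alpha_2^*(u^*))$, while the reverse inequality follows at once by choosing $\alpha_2=\alpha_2^*$ in the outer supremum and again using Lemma \ref{lem3}(ii). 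Hence (\ref{CS-1}) holds with equality attained at $(u^*,\alpha_2^*)$.

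For the value formula I would substitute $(u,v)=(u^*,\alpha_2^*(u^*))$ into (\ref{J}). Since $\tilde{v}^*=-\bar{\mathcal{R}}_2^{-1}\bar{\mathcal{S}}_2\tilde{x}$ satisfies $\mathbb{E}[\tilde{v}^*\mid\mathcal{G}_s]=0$, one has $\hat{v}=\hat{\alpha}_2^*(u^*)$ and $\tilde{v}=\tilde{v}^*$ under this substitution, so all three quadratic penalty terms in (\ref{J}) vanish and the identity collapses precisely to (\ref{value1}). The remaining issue, which I view as the main obstacle and would dispose of at the beginning, is admissibility of the feedback pair: substituting $u^*$ from (\ref{OCS-1}) into (\ref{hat_x}) must yield a unique square-integrable solution $\hat{x}^*$ (so that $u^*\in L_{\mathbb{G}}^2(t,T;\mathbb{R}^m)$), and (\ref{tilde_x}) closed by $\tilde{v}^*=-\bar{\mathcal{R}}_2^{-1}\bar{\mathcal{S}}_2\tilde{x}$ must be well-posed; both reduce to standard SDE existence results given (H1)-(H2), continuity of $\Pi$, $P$, $\eta$ on $[t,T]\times\mathcal{S}$, and the uniform invertibility guaranteed by Condition~(\uppercase\expandafter{\romannumeral1}). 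Nonanticipativity of $\alpha_2^*$ then follows from pathwise uniqueness of the closed-loop filtering SDE, since two controls agreeing on $[t,\tau]$ produce identical filtered states and hence identical strategies on $[t,\tau]$.
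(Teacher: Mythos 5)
Your proposal is correct and follows essentially the same route as the paper: both arguments reduce the theorem to the two one-sided inequalities of Lemma \ref{lem3}, combine them (via slightly different but equivalent inf/sup bookkeeping) to identify $J_\gamma(t,\xi,i;u^*,\alpha_2^*(u^*))$ with $\sup_{\alpha_2}\inf_u J_\gamma$, and then read the value (\ref{value1}) off the completed-square identity (\ref{J}) by noting that the quadratic penalty terms vanish at $(u^*,\alpha_2^*(u^*))$. Your explicit attention to admissibility and well-posedness of the closed-loop feedback system is a detail the paper leaves implicit, but it does not change the substance of the argument.
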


\begin{proof}
We shall show that the control-strategy pair $(u^*(\cdot),\alpha_2^*(\cdot))$ is optimal, i.e., we shall verify (\ref{CS-1}) holds true. 
By Lemma \ref{lem3} (\romannumeral2),
\begin{equation*}
	J_\gamma(t,\xi,i;u^*(\cdot),\alpha_2^*(u^*)(\cdot))\leq\underset{u(\cdot)\in L_{\mathbb{G}}^2(t,T;\mathbb{R}^{m})}\inf J_\gamma(t,\xi,i;u(\cdot),\alpha_2^*(u)(\cdot)),
\end{equation*}
thus we have
\begin{equation*}
	J_\gamma(t,\xi,i;u^*(\cdot),\alpha_2^*(u^*)(\cdot))=\underset{u(\cdot)\in L_{\mathbb{G}}^2(t,T;\mathbb{R}^{m})}\inf J_\gamma(t,\xi,i;u(\cdot),\alpha_2^*(u)(\cdot)),
\end{equation*}
and
\begin{equation}\label{ineq1}
	J_\gamma(t,\xi,i;u^*(\cdot),\alpha_2^*(u^*)(\cdot))\leq\underset{\alpha_2(\cdot)\in\mathcal{A}_2[t,T]}\sup\underset{u(\cdot)\in L_{\mathbb{G}}^2(t,T;\mathbb{R}^{m})}\inf J_\gamma(t,\xi,i;u(\cdot),\alpha_2(u)(\cdot)).
\end{equation}
Similarly, by Lemma \ref{lem3} (\romannumeral1),
\begin{equation*}
	J_\gamma(t,\xi,i;u(\cdot),\alpha_2^*(u)(\cdot))=\underset{\alpha_2(\cdot)\in\mathcal{A}_2[t,T]}\sup J_\gamma(t,\xi,i;u(\cdot),\alpha_2(u)(\cdot)),\quad \forall u(\cdot)\in L_{\mathbb{G}}^2(t,T;\mathbb{R}^{m}).
\end{equation*}
Therefore,
\begin{equation}\label{ineq2}
\begin{aligned}
	J_\gamma(t,\xi,i;u^*(\cdot),\alpha_2^*(u^*)(\cdot))=&\underset{u(\cdot)\in L_{\mathbb{G}}^2(t,T;\mathbb{R}^{m})}\inf\underset{\alpha_2(\cdot)\in\mathcal{A}_2[t,T]}\sup J_\gamma(t,\xi,i;u(\cdot),\alpha_2(u)(\cdot))\\
	\geq&\underset{\alpha_2(\cdot)\in\mathcal{A}_2[t,T]}\sup\underset{u(\cdot)\in L_{\mathbb{G}}^2(t,T;\mathbb{R}^{m})}\inf J_\gamma(t,\xi,i;u(\cdot),\alpha_2(u)(\cdot)).
\end{aligned}
\end{equation}
From (\ref{ineq1}) and (\ref{ineq2}), we obtain the desired conclusion that $(u^*(\cdot),\alpha_2^*(\cdot))$ is an optimal control-strategy pair of Player 1's value $J_\gamma(t,\xi,i;u^*(\cdot),\alpha_2^*(u^*)(\cdot))$, which comes from the result of the five-step complication-of-squares procedure (see the proof of Lemma \ref{lem3}).
\end{proof}

A similar analysis as above can be applied to Player 2’s value, which leads to the following.

\begin{mythm}\label{Thm2}
Let (H1)-(H3) hold, for any given disturbance attenuation level $\gamma>\gamma^*$, assume the Riccati equations (\ref{Pi}) and (\ref{P}) admit a solution $(\Pi(\cdot,\cdot),P(\cdot,\cdot))\in C([t,T]\times\mathcal{S};\mathbb{S}^n)\times C([t,T]\times\mathcal{S};\mathbb{S}^n)$ satisfying Condition (\uppercase\expandafter{\romannumeral2}). Then, for any $(\xi,i)\in L_{\mathcal{F}_t}^2(\Omega;\mathbb{R}^n)\times\mathcal{S}$, Problem (SCG-2) admits an optimal control-strategy pair $(\alpha_1^*(\cdot),v^*(\cdot))\in \mathcal{A}_1[t,T]\times L_{\mathbb{F}}^2(t,T;\mathbb{R}^{n_v})$ for Player 2's value, which is in a feedback form and defined by
\begin{equation}\label{OCS-2}
\left\{\begin{aligned}
	\alpha_1^*(\hat{v})(s)=&-\hat{\mathcal{R}}_{11}(s,\alpha(s))^{-1}\Big[\hat{\mathcal{R}}_{12}(s,\alpha(s))\hat{v}(s)+\hat{\mathcal{S}}_1(s,\alpha(s))\hat{x}(s)\\
	&\qquad +B_1(s,\alpha(s))^\top\bar{\eta}(s,\alpha(s))+D_1(s,\alpha(s))^\top P(s,\alpha(s))\sigma(s,\alpha(s))\\
    &\qquad +\bar{D}_1(s,\alpha(s))^\top \Pi(s,\alpha(s))\bar{\sigma}(s,\alpha(s))+\rho_1(s,\alpha(s))\Big],\\
	\hat{v}^*(s)=&-\Big[\hat{\mathcal{R}}_{22}(s,\alpha(s))-\hat{\mathcal{R}}_{12}(s,\alpha(s))^\top\hat{\mathcal{R}}_{11}(s,\alpha(s))^{-1}\hat{\mathcal{R}}_{12}(s,\alpha(s))\Big]^{-1}\Bigl\{\Big[\hat{\mathcal{S}}_2(s,\alpha(s))\\
	&\qquad-\hat{\mathcal{R}}_{12}(s,\alpha(s))^\top\hat{\mathcal{R}}_{11}(s,\alpha(s))^{-1}\hat{\mathcal{S}}_1(s,\alpha(s))\Big]\hat{x}(s)+B_2(s,\alpha(s))^\top\bar{\eta}(s,\alpha(s))\\
	&\qquad+D_2(s,\alpha(s))^\top P(s,\alpha(s))\sigma(s,\alpha(s))+\bar{D}_2(s,\alpha(s))^\top \Pi(s,\alpha(s))\bar{\sigma}(s,\alpha(s))\\
	&\qquad+\rho_2(s,\alpha(s))-\hat{\mathcal{R}}_{12}(s,\alpha(s))^\top\hat{\mathcal{R}}_{11}(s,\alpha(s))^{-1}\Big[B_1(s,\alpha(s))^\top\bar{\eta}(s)+D_1(s,\alpha(s))^\top\\
	&\qquad\times P(s,\alpha(s))\sigma(s,\alpha(s))+\bar{D}_1(s,\alpha(s))^\top \Pi(s,\alpha(s))\bar{\sigma}(s,\alpha(s))+\rho_1(s,\alpha(s))\Big]\Bigr\},\\
	\tilde{v}^*(s)=&-\bar{\mathcal{R}}_2(s,\alpha(s))^{-1}\bar{\mathcal{S}}_2(s,\alpha(s))\tilde{x}(s),\quad v^*(s)=\hat{v}^*(s)+\tilde{v}^*(s),\quad s\in[t,T],
\end{aligned}\right.
\end{equation}
where $\bar{\eta}(\cdot,\cdot)\in C([t,T]\times\mathcal{S};\mathbb{R}^n)$ is the solution of the following BODE:
\begin{equation}\label{eta2}
\left\{\begin{aligned} 
    &\dot{\bar{\eta}}(s,i)+A(s,i)^\top\bar{\eta}(s,i)-\hat{\mathcal{S}}_1(s,i)^\top\hat{\mathcal{R}}_{11}(s,i)^{-1}\bar{\psi}(s,i)-\big(\hat{\mathcal{S}}_2(s,i)
     -\hat{\mathcal{R}}_{12}(s,i)^\top\hat{\mathcal{R}}_{11}(s,i)^{-1}\\
	&\quad\times\hat{\mathcal{S}}_1(s,i)\big)^\top\big(\hat{\mathcal{R}}_{22}(s,i)-\hat{\mathcal{R}}_{12}(s,i)^\top\hat{\mathcal{R}}_{11}(s,i)^{-1}\hat{\mathcal{R}}_{12}(s,i)\big)^{-1}\bar{\varphi}(s,i)+C(s,i)^\top P(s,i)\sigma(s,i)\\
	&\quad+\bar{C}(s,i)^\top \Pi(s,i)\bar{\sigma}(s,i)+P(s,i)b(s,i)+q(s,i)+\sum_{j=1}^{D}\lambda_{ij}\bar{\eta}(s,j)=0,\qquad a.e.\; s\in[t,T],\\
	&\bar{\eta}(T,i)=g(T,i),\quad i\in\mathcal{S},
\end{aligned}\right.
\end{equation}
and 
\begin{equation*}
\begin{aligned}
	\bar{\psi}(s,i)&:=B_1(s,i)^\top\bar{\eta}(s,i)+D_1(s,i)^\top P(s,i)\sigma(s,i)+\bar{D}_1(s,i)^\top \Pi(s,i)\bar{\sigma}(s,i)+\rho_1(s,i),\\
	\bar{\varphi}(s,i)&:=B_2(s,i)^\top\bar{\eta}(s,i)+D_2(s,i)^\top P(s,i)\sigma(s,i)+\bar{D}_2(s,i)^\top \Pi(s,i)\bar{\sigma}(s,i)+\rho_2(s,i)\\
	&\quad -\hat{\mathcal{R}}_{12}(s,i)^\top\hat{\mathcal{R}}_{11}(s,i)^{-1}\bar{\psi}(s,i).
\end{aligned}
\end{equation*}
Moreover, Player 2's value is given by
\begin{equation}\label{value2}
\begin{aligned}
	&J_\gamma(t,\xi,i;\alpha_1^*(\hat{v}^*)(\cdot),v^*(\cdot))=\mathbb{E}\bigg\{\langle \Pi(t,i)\tilde{\xi},\tilde{\xi}\rangle+\langle P(t,i)\hat{\xi},\hat{\xi}\rangle+2\langle\bar{\eta}(t,i),\hat{\xi}\rangle\\
	&\quad+\int_t^T\Big[\langle P\sigma,\sigma\rangle+\langle\Pi\bar{\sigma},\bar{\sigma}\rangle+2\langle\bar{\eta},b\rangle
     -\langle(\hat{\mathcal{R}}_{22}-\hat{\mathcal{R}}_{12}^\top\hat{\mathcal{R}}_{11}^{-1}\hat{\mathcal{R}}_{12})^{-1}\bar{\varphi},\bar{\varphi}\rangle-\langle\hat{\mathcal{R}}_{11}^{-1}\bar{\psi},\bar{\psi}\rangle\Big]ds\bigg\}.
\end{aligned}
\end{equation}
\end{mythm}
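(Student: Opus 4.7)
The plan is to mirror the proof strategy of Theorem \ref{Thm1} by first establishing the analog of Lemma \ref{lem3} for Player 2 and then deducing the minimax/maximin equality. More precisely, I would first prove an intermediate lemma stating that, under Condition (\uppercase\expandafter{\romannumeral2}) and for every admissible $v(\cdot) \in L_{\mathbb{F}}^2(t,T;\mathbb{R}^{n_v})$ and every $\alpha_1(\cdot)\in\mathcal{A}_1[t,T]$, one has $J_\gamma(t,\xi,i;\alpha_1(v)(\cdot),v(\cdot)) \ge J_\gamma(t,\xi,i;\alpha_1^*(v)(\cdot),v(\cdot))$ with equality iff $\alpha_1(v)=\alpha_1^*(v)$, and $J_\gamma(t,\xi,i;\alpha_1^*(v^*)(\cdot),v^*(\cdot)) \ge J_\gamma(t,\xi,i;\alpha_1^*(v)(\cdot),v(\cdot))$ with equality iff $v=v^*$. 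Once this is in place, the sup-inf/inf-sup sandwich argument of Theorem \ref{Thm1} applies verbatim, only with the roles of the two players exchanged.

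The computation parallels the one in Lemma \ref{lem3}, but with two crucial changes dictated by the structure of Problem (SCG-2). First, the orthogonal decomposition $J_\gamma = \tilde{J}_\gamma + \hat{J}_\gamma$ via It\^{o}'s formula on $\langle \Pi \tilde{x},\tilde{x}\rangle$ and $\langle P\hat{x}+2\bar\eta,\hat{x}\rangle$ is the same, so the $\tilde{J}_\gamma$ piece, being independent of $u$ and $\hat{v}$, is handled exactly as before and yields the same $\tilde{v}^*(s) = -\bar{\mathcal{R}}_2^{-1}\bar{\mathcal{S}}_2\tilde{x}(s)$. Second, in $\hat{J}_\gamma$ I complete the square in $u$ first (this is the opposite order from Lemma \ref{lem3}), which produces precisely $\alpha_1^*(\hat{v})$ as the minimizer; doing so requires that $\hat{\mathcal{R}}_{11} \gg 0$, which is part of Condition (\uppercase\expandafter{\romannumeral2}). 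After this first squaring, one then completes the square in $\hat{v}$, and invoking Lemma \ref{lem2}(\romannumeral2) to re-express $\hat{\mathcal{S}}^\top\hat{\mathcal{R}}^{-1}\hat{\mathcal{S}}$ in terms of the Schur complement with respect to $\hat{\mathcal{R}}_{11}$ reveals the quadratic form $\langle (\hat{\mathcal{R}}_{22}-\hat{\mathcal{R}}_{12}^\top\hat{\mathcal{R}}_{11}^{-1}\hat{\mathcal{R}}_{12})(\hat v - \hat v^*), \hat v - \hat v^*\rangle$, which is negative definite by the second inequality in Condition (\uppercase\expandafter{\romannumeral2}). This sign is exactly what makes $\hat v^*$ a maximizer rather than a minimizer.

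The main technical point to verify is that all the affine (non-quadratic in the state and control) terms collapse correctly after the two squarings, and that the BODE \eqref{eta2} for $\bar{\eta}$ has been chosen precisely so that the linear terms in $\hat x$ arising from $\langle 2\bar{\eta},\hat x\rangle$ combine with $2\langle q,\hat x\rangle$, $2\langle C^\top P\sigma + \bar C^\top\Pi\bar\sigma + Pb,\hat x\rangle$ and the cross-terms produced by the Schur-complement substitution of Lemma \ref{lem2}(\romannumeral2) to give a total derivative, and thereby eliminate the residual affine-in-$\hat x$ contributions from $\hat{J}_\gamma$. This is the bookkeeping analogue of the derivation of \eqref{eta1}, with $(\hat{\mathcal{S}}_2,\hat{\mathcal{R}}_{22},\psi)$ replaced by $(\hat{\mathcal{S}}_1,\hat{\mathcal{R}}_{11},\bar\psi)$ in the Schur-complement formula; the displayed equation for $\bar\eta$ is exactly what is needed to cancel these terms.

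After these steps, $J_\gamma(t,\xi,i;u,v)$ is represented as
\begin{equation*}
\begin{aligned}
J_\gamma = &\ \mathbb{E}\langle \Pi(t,i)\tilde\xi,\tilde\xi\rangle + \mathbb{E}\langle P(t,i)\hat\xi + 2\bar\eta(t,i),\hat\xi\rangle \\
& + \mathbb{E}\int_t^T \Big[\langle\bar{\mathcal{R}}_2(\tilde v - \tilde v^*),\tilde v - \tilde v^*\rangle + \langle \hat{\mathcal{R}}_{11}(u-\alpha_1^*(\hat v)),u-\alpha_1^*(\hat v)\rangle \\
& \qquad\qquad + \langle (\hat{\mathcal{R}}_{22}-\hat{\mathcal{R}}_{12}^\top\hat{\mathcal{R}}_{11}^{-1}\hat{\mathcal{R}}_{12})(\hat v - \hat v^*),\hat v - \hat v^*\rangle + \Phi \Big]\,ds,
\end{aligned}
\end{equation*}
where $\Phi$ is the $(u,v)$-independent remainder appearing in \eqref{value2}. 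The sign structure (first term negative definite, second positive definite, third negative definite) delivers both inequalities of the intermediate lemma. Finally, replaying the Elliott-Kalton argument of Theorem \ref{Thm1}, namely $\inf_v \sup_{\alpha_1} \ge \sup_{\alpha_1}\inf_v$ paired with the sandwich $J_\gamma(\alpha_1^*(v^*),v^*) \ge \inf_v J_\gamma(\alpha_1^*(v),v) \ge \sup_{\alpha_1}\inf_v J_\gamma$ and the dual inequality, yields \eqref{CS-2}. The expression \eqref{value2} is then read off directly by substituting $u = \alpha_1^*(\hat v^*)$, $v = v^*$ into the quadratic representation above.
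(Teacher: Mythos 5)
Your proposal is correct and matches the paper's intended argument: the paper gives no separate proof of this theorem, stating only that ``a similar analysis as above can be applied to Player 2's value,'' and your proposal is precisely that analysis --- the same It\^{o}/orthogonal decomposition, with the order of the two completions of squares in $\hat{J}_\gamma$ reversed, Lemma \ref{lem2}(\romannumeral2) in place of (\romannumeral1), Condition (\uppercase\expandafter{\romannumeral2}) supplying the required sign of $\hat{\mathcal{R}}_{11}$ and of the Schur complement $\hat{\mathcal{R}}_{22}-\hat{\mathcal{R}}_{12}^\top\hat{\mathcal{R}}_{11}^{-1}\hat{\mathcal{R}}_{12}$, and the BODE (\ref{eta2}) absorbing the affine terms. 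One minor notational slip: for Problem (SCG-2) the relevant comparison in the final sandwich is $\inf_{\alpha_1}\sup_{v}\geq\sup_{v}\inf_{\alpha_1}$ rather than the $\inf_{v}\sup_{\alpha_1}\geq\sup_{\alpha_1}\inf_{v}$ you write at the end, but the chain you describe earlier (the Theorem \ref{Thm1} argument with the roles of the two players exchanged) is the correct one.
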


Next, to prove that the optimal control-strategy pair of Player 1 is consistent with that of Player 2, we first need to present the following result, which demonstrates the equivalence of the two BODEs given earlier.

\begin{mypro}\label{equivalence}
If the solutions $\eta(\cdot,\cdot)$ and $\bar{\eta}(\cdot,\cdot)$ exist for the BODEs (\ref{eta1}) and (\ref{eta2}), respectively, then the solutions are equal.
\end{mypro}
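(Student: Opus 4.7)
The plan is to show that the two BODEs (\ref{eta1}) and (\ref{eta2}), despite their different appearances, are merely two algebraic presentations of one and the same linear BODE in the unknown function on $[t,T]\times\mathcal{S}$; once this is established, classical uniqueness for linear ODE systems with a Markov chain generator (Gronwall applied to the difference of two solutions) immediately forces $\eta\equiv\bar{\eta}$.

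To identify the two presentations, introduce the unified quantity
\[
K(s,i;\zeta):=\hat{\mathcal{S}}(s,i)^\top\hat{\mathcal{R}}(s,i)^{-1}\bigl[B(s,i)^\top\zeta+D(s,i)^\top P(s,i)\sigma(s,i)+\bar{D}(s,i)^\top\Pi(s,i)\bar{\sigma}(s,i)+\rho(s,i)\bigr],
\]
defined for any $\mathbb{R}^n$-valued function $\zeta$, without privileging either Schur complement. Specializing $\zeta=\eta$ and applying the block factorization of $\hat{\mathcal{R}}^{-1}$ based on the $\hat{\mathcal{R}}_{22}$-Schur complement (exactly as in the proof of Lemma \ref{lem2}(i)), a direct block-matrix computation shows
\[
K(s,i;\eta)=\hat{\mathcal{S}}_2^\top\hat{\mathcal{R}}_{22}^{-1}\psi(s,i)+\bigl(\hat{\mathcal{S}}_1-\hat{\mathcal{R}}_{12}\hat{\mathcal{R}}_{22}^{-1}\hat{\mathcal{S}}_2\bigr)^\top\bigl(\hat{\mathcal{R}}_{11}-\hat{\mathcal{R}}_{12}\hat{\mathcal{R}}_{22}^{-1}\hat{\mathcal{R}}_{12}^\top\bigr)^{-1}\varphi(s,i),
\]
which is precisely the nonlinear source term of (\ref{eta1}). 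Symmetrically, using the $\hat{\mathcal{R}}_{11}$-Schur factorization from Lemma \ref{lem2}(ii),
\[
K(s,i;\bar{\eta})=\hat{\mathcal{S}}_1^\top\hat{\mathcal{R}}_{11}^{-1}\bar{\psi}(s,i)+\bigl(\hat{\mathcal{S}}_2-\hat{\mathcal{R}}_{12}^\top\hat{\mathcal{R}}_{11}^{-1}\hat{\mathcal{S}}_1\bigr)^\top\bigl(\hat{\mathcal{R}}_{22}-\hat{\mathcal{R}}_{12}^\top\hat{\mathcal{R}}_{11}^{-1}\hat{\mathcal{R}}_{12}\bigr)^{-1}\bar{\varphi}(s,i),
\]
which is the nonlinear source term of (\ref{eta2}).

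Substituting these identities back, each of (\ref{eta1}) and (\ref{eta2}) can be rewritten as the common linear BODE
\[
\dot{\zeta}(s,i)+A(s,i)^\top\zeta(s,i)-K(s,i;\zeta)+C(s,i)^\top P(s,i)\sigma(s,i)+\bar{C}(s,i)^\top\Pi(s,i)\bar{\sigma}(s,i)+P(s,i)b(s,i)+q(s,i)+\sum_{j=1}^{D}\lambda_{ij}\zeta(s,j)=0,
\]
with terminal condition $\zeta(T,i)=g(T,i)$ for every $i\in\mathcal{S}$. Because $\zeta\mapsto K(\cdot,\cdot;\zeta)$ is a bounded linear operator on $C([t,T]\times\mathcal{S};\mathbb{R}^n)$ (the blocks $B$, $\hat{\mathcal{S}}$ are integrable and $\hat{\mathcal{R}}^{-1}$ is bounded on $[t,T]\times\mathcal{S}$ under the standing conditions), subtracting two solutions and applying Gronwall's inequality shows the above linear BODE admits at most one solution. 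Hence $\eta$ and $\bar{\eta}$, being solutions of the same equation with identical terminal data, must coincide.

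The main obstacle is purely algebraic bookkeeping: one must verify that the embedded defect $-\hat{\mathcal{R}}_{12}\hat{\mathcal{R}}_{22}^{-1}\psi$ built into $\varphi$ (respectively $-\hat{\mathcal{R}}_{12}^\top\hat{\mathcal{R}}_{11}^{-1}\bar{\psi}$ built into $\bar{\varphi}$) combines with the corresponding Schur complement so as to reproduce exactly the unified product $\hat{\mathcal{S}}^\top\hat{\mathcal{R}}^{-1}(\cdot)$. Once this block-inversion identity---which is essentially a re-run of the manipulation already performed in the proof of Lemma \ref{lem2}---is confirmed, the equivalence and hence the uniqueness conclusion follow in one line.
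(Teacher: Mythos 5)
Your proof is correct, and it reaches the conclusion by a cleaner route than the paper's. The paper proves the proposition by expanding the two source terms, collecting the coefficients of $B_k^\top\eta+D_k^\top P\sigma+\bar{D}_k^\top\Pi\bar{\sigma}+\rho_k$ ($k=1,2$) in each BODE, and verifying directly that these coefficients agree; the key matrix identity there is manipulated through $\hat{\mathcal{R}}_{12}^{-1}$, which forces the paper to assume $\hat{\mathcal{R}}_{12}$ invertible (or to invoke a Moore--Penrose pseudoinverse). You instead show that each source term equals the single unified expression $\hat{\mathcal{S}}^\top\hat{\mathcal{R}}^{-1}\big(B^\top\zeta+D^\top P\sigma+\bar{D}^\top\Pi\bar{\sigma}+\rho\big)$ by running the two block factorizations of $\hat{\mathcal{R}}^{-1}$ from Lemma \ref{lem2}(i) and (ii); this only uses the invertibility of $\hat{\mathcal{R}}_{22}$, $\hat{\mathcal{R}}_{11}$ and the relevant Schur complements, all guaranteed by Condition (\uppercase\expandafter{\romannumeral1}$\&$\uppercase\expandafter{\romannumeral2}), so the pseudoinverse caveat disappears. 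In effect you are observing that both (\ref{eta1}) and (\ref{eta2}) are the compact equation (\ref{eta}), which the paper only establishes later inside the proof of Theorem \ref{Thm-OCS}; your proposal front-loads that computation and thereby makes the proposition almost immediate. You also make explicit the final step --- uniqueness for the resulting linear BODE via Gronwall applied to the difference --- which the paper compresses into ``subtracting the two BODEs yields $\eta=\bar{\eta}$.'' One cosmetic point: your $K(\cdot,\cdot;\zeta)$ is affine rather than linear in $\zeta$, but since only the difference of two solutions enters the Gronwall estimate, and that difference satisfies a genuinely linear homogeneous BODE with $L^1$ coefficients, the argument stands.
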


\begin{proof}
It can be seen from two BODEs (\ref{eta1}) and (\ref{eta2}) that, to prove the equality of the solutions $\eta(\cdot,\cdot)$ and $\bar{\eta}(\cdot,\cdot)$, it suffices to show that the corresponding coefficients in $-\hat{\mathcal{S}}_2^\top\hat{\mathcal{R}}_{22}^{-1}\psi-\big(\hat{\mathcal{S}}_1-\hat{\mathcal{R}}_{12}\hat{\mathcal{R}}_{22}^{-1}\hat{\mathcal{S}}_2\big)^\top
\big(\hat{\mathcal{R}}_{11}-\hat{\mathcal{R}}_{12}\hat{\mathcal{R}}_{22}^{-1}\hat{\mathcal{R}}_{12}^\top\big)^{-1}\varphi$, $-\hat{\mathcal{S}}_1^\top\hat{\mathcal{R}}_{11}^{-1}\bar{\psi}-\big(\hat{\mathcal{S}}_2-\hat{\mathcal{R}}_{12}^\top\hat{\mathcal{R}}_{11}^{-1}\hat{\mathcal{S}}_1\big)^\top
\big(\hat{\mathcal{R}}_{22}-\hat{\mathcal{R}}_{12}^\top\hat{\mathcal{R}}_{11}^{-1}\hat{\mathcal{R}}_{12}\big)^{-1}\bar{\varphi}$ are equal correspondingly.

By simplification and combining like terms, we have
\begin{equation*}
\begin{aligned} 
    &-\hat{\mathcal{S}}_2^\top\hat{\mathcal{R}}_{22}^{-1}\psi-\big(\hat{\mathcal{S}}_1-\hat{\mathcal{R}}_{12}\hat{\mathcal{R}}_{22}^{-1}\hat{\mathcal{S}}_2\big)^\top\big(\hat{\mathcal{R}}_{11}
     -\hat{\mathcal{R}}_{12}\hat{\mathcal{R}}_{22}^{-1}\hat{\mathcal{R}}_{12}^\top\big)^{-1}\varphi\\
	&=\Big[\big(\hat{\mathcal{S}}_1-\hat{\mathcal{R}}_{12}\hat{\mathcal{R}}_{22}^{-1}\hat{\mathcal{S}}_2\big)^\top\big(\hat{\mathcal{R}}_{11}-\hat{\mathcal{R}}_{12}\hat{\mathcal{R}}_{22}^{-1}\hat{\mathcal{R}}_{12}^\top\big)^{-1}
     \hat{\mathcal{R}}_{12}\hat{\mathcal{R}}_{22}^{-1}-\hat{\mathcal{S}}_2^\top\hat{\mathcal{R}}_{22}^{-1}\Big]\big(B_2^\top\eta+D_2^\top P\sigma+\bar{D}_2^\top \Pi\bar{\sigma}\\
	&\quad+\rho_2\big)-\big(\hat{\mathcal{S}}_1-\hat{\mathcal{R}}_{12}\hat{\mathcal{R}}_{22}^{-1}\hat{\mathcal{S}}_2\big)^\top\big(\hat{\mathcal{R}}_{11}-\hat{\mathcal{R}}_{12}\hat{\mathcal{R}}_{22}^{-1}
     \hat{\mathcal{R}}_{12}^\top\big)^{-1}\big(B_1^\top\eta+D_1^\top P\sigma+\bar{D}_1^\top \Pi\bar{\sigma}+\rho_1\big),
\end{aligned}
\end{equation*}
and
\begin{equation*}
\begin{aligned} 
    &-\hat{\mathcal{S}}_1^\top\hat{\mathcal{R}}_{11}^{-1}\bar{\psi}-\big(\hat{\mathcal{S}}_2-\hat{\mathcal{R}}_{12}^\top\hat{\mathcal{R}}_{11}^{-1}\hat{\mathcal{S}}_1\big)^\top\big(\hat{\mathcal{R}}_{22}
     -\hat{\mathcal{R}}_{12}^\top\hat{\mathcal{R}}_{11}^{-1}\hat{\mathcal{R}}_{12}\big)^{-1}\bar{\varphi}\\
	&=\Big[\big(\hat{\mathcal{S}}_2-\hat{\mathcal{R}}_{12}^\top\hat{\mathcal{R}}_{11}^{-1}\hat{\mathcal{S}}_1\big)^\top\big(\hat{\mathcal{R}}_{22}-\hat{\mathcal{R}}_{12}^\top\hat{\mathcal{R}}_{11}^{-1}
     \hat{\mathcal{R}}_{12}\big)^{-1}\hat{\mathcal{R}}_{12}^\top\hat{\mathcal{R}}_{11}^{-1}-\hat{\mathcal{S}}_1^\top\hat{\mathcal{R}}_{11}^{-1}\Big]\big(B_1^\top\bar{\eta}+D_1^\top P\sigma+\bar{D}_1^\top \Pi\bar{\sigma}\\
	&\quad+\rho_1\big)-\big(\hat{\mathcal{S}}_2-\hat{\mathcal{R}}_{12}^\top\hat{\mathcal{R}}_{11}^{-1}\hat{\mathcal{S}}_1\big)^\top\big(\hat{\mathcal{R}}_{22}-\hat{\mathcal{R}}_{12}^\top\hat{\mathcal{R}}_{11}^{-1}
     \hat{\mathcal{R}}_{12}\big)^{-1}\big(B_2^\top\bar{\eta}+D_2^\top P\sigma+\bar{D}_2^\top \Pi\bar{\sigma}+\rho_2\big).
\end{aligned}
\end{equation*}
In fact, it suffices to verify that the corresponding coefficients pertaining to $B_2^\top\eta+D_2^\top P\sigma+\bar{D}_2^\top \Pi\bar{\sigma}+\rho_2$ and $B_2^\top\bar{\eta}+D_2^\top P\sigma+\bar{D}_2^\top \Pi\bar{\sigma}+\rho_2$  as well as  $B_1^\top\eta+D_1^\top P\sigma+\bar{D}_1^\top \Pi\bar{\sigma}+\rho_1$ and $B_1^\top\bar{\eta}+D_1^\top P\sigma+\bar{D}_1^\top \Pi\bar{\sigma}+\rho_1$ are equal to complete the proof.

Next, we verify only the first one, the second one can be proved using a similar technique. For the sake of computational convenience, we assume that the matrix $\hat{\mathcal{R}}_{12}$ is invertible (otherwise, use the (Moore–Penrose)
\emph{pseudoinverse} of $\hat{\mathcal{R}}_{12}$), we have
\begin{equation*}
\begin{aligned}
	&\big(\hat{\mathcal{S}}_1-\hat{\mathcal{R}}_{12}\hat{\mathcal{R}}_{22}^{-1}\hat{\mathcal{S}}_2\big)^\top\big(\hat{\mathcal{R}}_{11}-\hat{\mathcal{R}}_{12}\hat{\mathcal{R}}_{22}^{-1}
\hat{\mathcal{R}}_{12}^\top\big)^{-1}\hat{\mathcal{R}}_{12}\hat{\mathcal{R}}_{22}^{-1}-\hat{\mathcal{S}}_2^\top\hat{\mathcal{R}}_{22}^{-1}\\
	&=\big(\hat{\mathcal{S}}_1-\hat{\mathcal{R}}_{12}\hat{\mathcal{R}}_{22}^{-1}\hat{\mathcal{S}}_2\big)^\top\big(I-\hat{\mathcal{R}}_{11}^{-1}\hat{\mathcal{R}}_{12}
\hat{\mathcal{R}}_{22}^{-1}\hat{\mathcal{R}}_{12}^\top\big)^{-1}\hat{\mathcal{R}}_{11}^{-1}\hat{\mathcal{R}}_{12}\hat{\mathcal{R}}_{22}^{-1}-\hat{\mathcal{S}}_2^\top\hat{\mathcal{R}}_{22}^{-1}\\
	&=\big(\hat{\mathcal{S}}_1-\hat{\mathcal{R}}_{12}\hat{\mathcal{R}}_{22}^{-1}\hat{\mathcal{S}}_2\big)^\top\hat{\mathcal{R}}_{11}^{-1}\big(I-\hat{\mathcal{R}}_{12}
\hat{\mathcal{R}}_{22}^{-1}\hat{\mathcal{R}}_{12}^\top\hat{\mathcal{R}}_{11}^{-1}\big)^{-1}\hat{\mathcal{R}}_{12}\hat{\mathcal{R}}_{22}^{-1}-\hat{\mathcal{S}}_2^\top\hat{\mathcal{R}}_{22}^{-1}\\
	&=\big(\hat{\mathcal{R}}_{11}^{-1}\hat{\mathcal{S}}_1-\hat{\mathcal{R}}_{11}^{-1}\hat{\mathcal{R}}_{12}\hat{\mathcal{R}}_{22}^{-1}\hat{\mathcal{S}}_2\big)^\top
\Big[\big(\hat{\mathcal{R}}_{12}\hat{\mathcal{R}}_{22}^{-1}\big)^{-1}-\hat{\mathcal{R}}_{12}^\top\hat{\mathcal{R}}_{11}^{-1}\Big]^{-1}-\hat{\mathcal{S}}_2^\top\hat{\mathcal{R}}_{22}^{-1}\\
	&=\Big[\big(\hat{\mathcal{R}}_{11}^{-1}\hat{\mathcal{S}}_1-\hat{\mathcal{R}}_{11}^{-1}\hat{\mathcal{R}}_{12}\hat{\mathcal{R}}_{22}^{-1}\hat{\mathcal{S}}_2\big)^\top
-\hat{\mathcal{S}}_2^\top\big(\hat{\mathcal{R}}_{12}^{-1}-\hat{\mathcal{R}}_{22}^{-1}\hat{\mathcal{R}}_{12}^\top\hat{\mathcal{R}}_{11}^{-1}\big)\Big]
\Big[\big(\hat{\mathcal{R}}_{12}\hat{\mathcal{R}}_{22}^{-1}\big)^{-1}-\hat{\mathcal{R}}_{12}^\top\hat{\mathcal{R}}_{11}^{-1}\Big]^{-1}\\
	&=\big(\hat{\mathcal{S}}_1^\top\hat{\mathcal{R}}_{11}^{-1}-\hat{\mathcal{S}}_2^\top\hat{\mathcal{R}}_{12}^{-1}\big)\Big[\big(\hat{\mathcal{R}}_{12}
\hat{\mathcal{R}}_{22}^{-1}\big)^{-1}-\hat{\mathcal{R}}_{12}^\top\hat{\mathcal{R}}_{11}^{-1}\Big]^{-1}\\
	&=\big(\hat{\mathcal{S}}_1^\top\hat{\mathcal{R}}_{11}^{-1}\hat{\mathcal{R}}_{12}-\hat{\mathcal{S}}_2^\top\big)\hat{\mathcal{R}}_{12}^{-1}\Big[\big(\hat{\mathcal{R}}_{12}
\hat{\mathcal{R}}_{22}^{-1}\big)^{-1}-\hat{\mathcal{R}}_{12}^\top\hat{\mathcal{R}}_{11}^{-1}\Big]^{-1}\\
	&=-\big(\hat{\mathcal{S}}_2-\hat{\mathcal{R}}_{12}^\top\hat{\mathcal{R}}_{11}^{-1}\hat{\mathcal{S}}_1\big)^\top
\big(\hat{\mathcal{R}}_{22}-\hat{\mathcal{R}}_{12}^\top\hat{\mathcal{R}}_{11}^{-1}\hat{\mathcal{R}}_{12}\Big)^{-1}.
\end{aligned}
\end{equation*}
Based on the above analysis and derivation, subtracting the two BODEs yields $\eta(\cdot,\cdot)=\bar{\eta}(\cdot,\cdot)$. The proof is completed.
\end{proof}

\begin{mythm}\label{Thm-OCS}
Let (H1)-(H3) hold, for any given disturbance attenuation level $\gamma>\gamma^*$, assume the Riccati equations (\ref{Pi}) and (\ref{P}) admit a solution $(\Pi(\cdot,\cdot),P(\cdot,\cdot))\in C([t,T]\times\mathcal{S};\mathbb{S}^n)\times C([t,T]\times\mathcal{S};\mathbb{S}^n)$ satisfying Condition (\uppercase\expandafter{\romannumeral1}$\&$\uppercase\expandafter{\romannumeral2}). Then, for any $(\xi,i)\in L_{\mathcal{F}_t}^2(\Omega;\mathbb{R}^n)\times\mathcal{S}$, we have
\begin{equation}\label{equal}
	u^*=\alpha_1^*(\hat{v}^*),\qquad \hat{v}^*=\hat{\alpha}_2^*(u^*),\qquad v^*=\alpha_2^*(u^*)=\hat{v}^*+\tilde{v}^*,
\end{equation}
where $u^*(\cdot)$, $\hat{\alpha}_2^*(\cdot)$, $\tilde{v}^*(\cdot)$, $\alpha_2^*(\cdot)$ and $\alpha_1^*(\cdot)$, $\hat{v}^*(\cdot)$, $v^*(\cdot)$ are defined by (\ref{OCS-1}) and (\ref{OCS-2}) with $\bar{\eta}(\cdot,\cdot)$ replaced by $\eta(\cdot,\cdot)$, respectively. Moreover, the value of Problem (SCG) exists, which is given by
\begin{equation}\label{value}
\begin{aligned}
	&J_\gamma(t,\xi,i;u^*(\cdot),v^*(\cdot))\\
	&=\mathbb{E}\bigg\{\langle \Pi(t,i)\tilde{\xi},\tilde{\xi}\rangle+\langle P(t,i)\hat{\xi},\hat{\xi}\rangle+2\langle\eta(t,i),\hat{\xi}\rangle+\int_{t}^{T}\Big[\langle P(s,\alpha(s))\sigma(s,\alpha(s)),\sigma(s,\alpha(s))\rangle\\
	&\qquad\quad+\langle\Pi(s,\alpha(s))\bar{\sigma}(s,\alpha(s)),\bar{\sigma}(s,\alpha(s))\rangle+2\langle\eta(s,\alpha(s)),b(s,\alpha(s))\rangle\\
	&\qquad\quad-\langle\hat{\mathcal{R}}(s,\alpha(s))^{-1}\Psi(s,\alpha(s)),\Psi(s,\alpha(s))\rangle\Big]ds\bigg\},
\end{aligned}
\end{equation}
where
\begin{equation*}
\begin{aligned}
	\Psi(s,\alpha(s))\equiv\begin{pmatrix}
		\bar{\psi}(s,\alpha(s)) \\ \psi(s,\alpha(s))
	\end{pmatrix}&:=B(s,\alpha(s))^\top\eta(s,\alpha(s))+D(s,\alpha(s))^\top P(s,\alpha(s))\sigma(s,\alpha(s))\\
	&\quad +\bar{D}(s,\alpha(s))^\top\Pi(s,\alpha(s))\bar{\sigma}(s,\alpha(s))+\rho(s,\alpha(s)),
\end{aligned}
\end{equation*}
and $\eta(\cdot,\cdot)\in C([t,T]\times\mathcal{S};\mathbb{R}^n)$ satisfies the following BODE:
\begin{equation}\label{eta}
\hspace{-3mm}\left\{\begin{aligned}
	&\dot{\eta}(s,i)+\big(A(s,i)-B(s,i)\hat{\mathcal{R}}(s,i)^{-1}\hat{\mathcal{S}}(s,i)\big)^\top\eta(s,i)+\big(C(s,i)-D(s,i)\hat{\mathcal{R}}(s,i)^{-1}\hat{\mathcal{S}}(s,i)\big)^\top\\
	&\quad\times P(s,i)\sigma(s,i)+\big(\bar{C}(s,i)-\bar{D}(s,i)\hat{\mathcal{R}}(s,i)^{-1}\hat{\mathcal{S}}(s,i)\big)^\top\Pi(s,i)\bar{\sigma}(s,i)-\hat{\mathcal{S}}(s,i)^\top\\
	&\quad\times\hat{\mathcal{R}}(s,i)^{-1}\rho(s,i)+P(s,i)b(s,i)+q(s,i)+\sum_{j=1}^{D}\lambda_{ij}\eta(s,j)=0,\quad a.e.\; s\in[t,T],\\
	&\eta(T,i)=g(T,i),\quad i\in\mathcal{S}.
\end{aligned}\right.
\end{equation}
\end{mythm}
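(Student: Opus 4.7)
The plan is to exploit the fact that Condition (I\&II) activates both Theorem \ref{Thm1} and Theorem \ref{Thm2} simultaneously, and then reconcile their feedback forms through an algebraic identity on the full block matrix $\hat{\mathcal{R}}(s,i)$. First I would apply Theorem \ref{Thm1} to obtain $(u^*(\cdot),\alpha_2^*(\cdot))$ as the optimal control-strategy pair of Player 1, and Theorem \ref{Thm2} to obtain $(\alpha_1^*(\cdot),v^*(\cdot))$ as the optimal control-strategy pair of Player 2. Since both BODEs (\ref{eta1}) and (\ref{eta2}) are solvable under the combined condition, Proposition \ref{equivalence} gives $\eta(\cdot,\cdot)=\bar{\eta}(\cdot,\cdot)$, so the two feedbacks can legitimately be compared pointwise.

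The key observation is that, after using $\eta=\bar{\eta}$, the defining relations for $u^*$ from (\ref{OCS-1}) and $\hat{v}^*$ from (\ref{OCS-2}) are each the block-elimination of the same $(m+n_v)\times(m+n_v)$ linear algebraic system
\begin{equation*}
\hat{\mathcal{R}}(s,i)\begin{pmatrix}u^*(s)\\ \hat{v}^*(s)\end{pmatrix}
=-\hat{\mathcal{S}}(s,i)\hat{x}(s)-\Psi(s,i),
\end{equation*}
whose unique solution exists because Condition (I\&II) makes $\hat{\mathcal{R}}(s,i)$ invertible (this is exactly the content of Lemma \ref{lem2}, using either of its two block-inversion formulas). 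Writing the solution out by the Schur complement in the first block gives back the expression for $u^*$ of Lemma \ref{lem3}; writing it by the Schur complement in the second block gives back $\hat{v}^*$ of Theorem \ref{Thm2}. Substituting $\hat{v}^*$ into $\alpha_1^*(\cdot)$ recovers $u^*$, and substituting $u^*$ into $\hat{\alpha}_2^*(\cdot)$ recovers $\hat{v}^*$, which establishes the identities $u^*=\alpha_1^*(\hat{v}^*)$ and $\hat{v}^*=\hat{\alpha}_2^*(u^*)$. Since $\tilde{v}^*(\cdot)$ in (\ref{OCS-1}) and (\ref{OCS-2}) has the identical expression $-\bar{\mathcal{R}}_2^{-1}\bar{\mathcal{S}}_2\tilde{x}$, the full disturbance $v^*=\hat{v}^*+\tilde{v}^*=\alpha_2^*(u^*)$ follows.

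To obtain the value formula (\ref{value}), I would plug $(u^*,v^*)$ into (\ref{value1}) (equivalently (\ref{value2})) and then combine
\begin{equation*}
-\langle \hat{\mathcal{R}}_{22}^{-1}\psi,\psi\rangle
-\langle (\hat{\mathcal{R}}_{11}-\hat{\mathcal{R}}_{12}\hat{\mathcal{R}}_{22}^{-1}\hat{\mathcal{R}}_{12}^{\top})^{-1}\varphi,\varphi\rangle
=-\langle \hat{\mathcal{R}}^{-1}\Psi,\Psi\rangle,
\end{equation*}
which is exactly identity (\ref{form1}) of Lemma \ref{lem2} applied to the affine term $\Psi$ in place of $\hat{\mathcal{S}}$. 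Finally, to verify that $\eta(\cdot,\cdot)$ satisfies the compact BODE (\ref{eta}), I would rewrite the linear-in-$\eta$ coefficients in (\ref{eta1}) by regrouping: the terms $-\hat{\mathcal{S}}_2^{\top}\hat{\mathcal{R}}_{22}^{-1}B_2^{\top}\eta-(\hat{\mathcal{S}}_1-\hat{\mathcal{R}}_{12}\hat{\mathcal{R}}_{22}^{-1}\hat{\mathcal{S}}_2)^{\top}(\hat{\mathcal{R}}_{11}-\hat{\mathcal{R}}_{12}\hat{\mathcal{R}}_{22}^{-1}\hat{\mathcal{R}}_{12}^{\top})^{-1}(B_1^{\top}-\hat{\mathcal{R}}_{12}\hat{\mathcal{R}}_{22}^{-1}B_2^{\top})\eta$ assemble precisely into $-\hat{\mathcal{S}}^{\top}\hat{\mathcal{R}}^{-1}B^{\top}\eta$ by the block-inverse of $\hat{\mathcal{R}}$, and similarly for the $P\sigma$, $\Pi\bar{\sigma}$ and $\rho$ terms; this recasts (\ref{eta1}) into (\ref{eta}).

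The main obstacle is the algebraic reconciliation in the previous paragraph: verifying, in a coordinate-free way, that the two Schur-complement representations in Lemma \ref{lem2} act on $\Psi$ and on the coefficient of $\eta$ in such a way that (\ref{eta1}), (\ref{eta2}) and (\ref{eta}) are mutually equivalent, and that the sum of the two ``residual'' quadratic terms in (\ref{value1}) collapses into the single term $-\langle \hat{\mathcal{R}}^{-1}\Psi,\Psi\rangle$. Once this is done, the minimax/maximin sandwich from Theorems \ref{Thm1} and \ref{Thm2} immediately yields that $(u^*,v^*)$ is a saddle point and that the common value is given by (\ref{value}).
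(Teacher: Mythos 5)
Your proposal is correct and follows essentially the same route as the paper: the paper's proof likewise identifies both feedback pairs as the two Schur-complement eliminations of the single algebraic system $\hat{\mathcal{R}}(u,\hat{v})^{\top}+\hat{\mathcal{S}}\hat{x}+\Psi=0$ (its equation (\ref{algebra eq})), concludes the consistency relations from the uniqueness of its solution under Condition (\uppercase\expandafter{\romannumeral1}$\&$\uppercase\expandafter{\romannumeral2}), collapses the two residual quadratic terms of (\ref{value1}) into $-\langle\hat{\mathcal{R}}^{-1}\Psi,\Psi\rangle$ via the block-inverse identity of Lemma \ref{lem2}, and recasts (\ref{eta1}) into (\ref{eta}) by the same regrouping. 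Your direct application of the Lemma \ref{lem2} identity to $\Psi$ is a slightly cleaner shortcut than the paper's substitution through the completed square, but the argument is the same in substance.
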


\begin{proof}
Recall that under Condition (\uppercase\expandafter{\romannumeral1}$\&$\uppercase\expandafter{\romannumeral2}), both $(u^*,\hat{\alpha}_2^*(u^*))$ and $(\alpha_1^*(\hat{v}^*),\hat{v}^*)$ are the unique solution of the following algebra equations system:
\begin{equation}\label{algebra eq}
\left\{\begin{aligned}
	&\hat{\mathcal{R}}_{11}u+\hat{\mathcal{R}}_{12}\hat{v}+\hat{\mathcal{S}}_1\hat{x}+B_1^\top\eta+D_1^\top P\sigma+\bar{D}_1^\top\Pi\bar{\sigma}+\rho_1=0,\\
	&\hat{\mathcal{R}}_{12}^\top u+\hat{\mathcal{R}}_{22}\hat{v}+\hat{\mathcal{S}}_2\hat{x}+B_2^\top\eta+D_2^\top P\sigma+\bar{D}_2^\top\Pi\bar{\sigma}+\rho_2=0,
\end{aligned}\right.
\end{equation} 
where the argument $(s,i)$ is suppressed for simplicity. In fact, regarding $\hat{v}$ as a function of $(\hat{x},u)$ and solving it from
the second equation of (\ref{algebra eq}), substituting the expression of $\hat{v}$ into the first equation of (\ref{algebra eq}), we have
\begin{equation}\label{solution1}
\left\{\begin{aligned}
	u=&-\big(\hat{\mathcal{R}}_{11}-\hat{\mathcal{R}}_{12}\hat{\mathcal{R}}_{22}^{-1}\hat{\mathcal{R}}_{12}^\top\big)^{-1}\big[\big(\hat{\mathcal{S}}_1
       -\hat{\mathcal{R}}_{12}\hat{\mathcal{R}}_{22}^{-1}\hat{\mathcal{S}}_2\big)\hat{x}+B_1^\top\eta+D_1^\top P\sigma+\bar{D}_1^\top\Pi\bar{\sigma}+\rho_1\\
	&\qquad\qquad\qquad\qquad\qquad\qquad-\hat{\mathcal{R}}_{12}\hat{\mathcal{R}}_{22}^{-1}\big(B_2^\top\eta+D_2^\top P\sigma+\bar{D}_2^\top\Pi\bar{\sigma}+\rho_2\big)\big],\\
	\hat{v}=&-\hat{\mathcal{R}}_{22}^{-1}\big[\hat{\mathcal{R}}_{12}^\top u+\hat{\mathcal{S}}_2\hat{x}+B_2^\top\eta+D_2^\top P\sigma+\bar{D}_2^\top\Pi\bar{\sigma}+\rho_2\big],\\
\end{aligned}\right.
\end{equation}
which is coincides with $(u^*,\hat{\alpha}_2^*(u^*))$ given by (\ref{OCS-1}).
On the other hand, regarding $u$ as a function of $(\hat{x},\hat{v})$ and solving it from
the first equation of (\ref{algebra eq}), substituting the expression of $u$ into the second equation of (\ref{algebra eq}), we get
\begin{equation}\label{solution2}
\left\{\begin{aligned}
	u=&-\hat{\mathcal{R}}_{11}^{-1}\big[\hat{\mathcal{R}}_{12}\hat{v}+\hat{\mathcal{S}}_1\hat{x}+B_1^\top\eta+D_1^\top P\sigma+\bar{D}_1^\top\Pi\bar{\sigma}+\rho_1\big],\\
	\hat{v}=&-\big(\hat{\mathcal{R}}_{22}-\hat{\mathcal{R}}_{12}^\top\hat{\mathcal{R}}_{11}^{-1}\hat{\mathcal{R}}_{12}\big)^{-1}\big[\big(\hat{\mathcal{S}}_2
     -\hat{\mathcal{R}}_{12}^\top\hat{\mathcal{R}}_{11}^{-1}\hat{\mathcal{S}}_1\big)\hat{x}+B_2^\top\eta+D_2^\top P\sigma+\bar{D}_2^\top\Pi\bar{\sigma}+\rho_2\\
	&\qquad\qquad\qquad\qquad\qquad\qquad-\hat{\mathcal{R}}_{12}^\top\hat{\mathcal{R}}_{11}^{-1}\big(B_1^\top\eta+D_1^\top P\sigma+\bar{D}_1^\top\Pi\bar{\sigma}+\rho_1\big)\big].\\
\end{aligned}\right.
\end{equation}
It can be seen that (\ref{solution2}) is the same as $(\alpha_1^*(\hat{v}^*),\hat{v}^*)$ denoted by (\ref{OCS-2}). Moreover, by virtue of the existence and uniqueness of the solution to the algebraic equations (\ref{algebra eq}), we have
\begin{equation*}
	u^*=\alpha_1^*(\hat{v}^*),\qquad \hat{v}^*=\hat{\alpha}_2^*(u^*).\\
\end{equation*}
Therefore, 
\begin{equation*}
	v^*=\hat{v}^*+\tilde{v}^*=\hat{\alpha}_2^*(u^*)+\tilde{v}^*=\alpha_2^*(u^*),
\end{equation*}
then the corresponding value are equal, i.e.,
\begin{equation*}
	J_\gamma(t,\xi,i;u^*,\alpha_2^*(u^*))=J_\gamma(t,\xi,i;\alpha_1^*(\hat{v}^*),v^*).
\end{equation*}
From (\ref{value1}), (\ref{algebra eq}) and (\ref{solution1}), we have
\begin{equation*}
\begin{aligned}
	&J_\gamma(t,\xi,i;u^*,v^*)=J_\gamma(t,\xi,i;u^*,\alpha_2^*(u^*))\\
    &=\mathbb{E}\bigg\{\langle \Pi(t,i)\tilde{\xi},\tilde{\xi}\rangle+\langle P(t,i)\hat{\xi},\hat{\xi}\rangle+2\langle\eta(t,i),\hat{\xi}\rangle+\int_t^T\Big[\langle P\sigma,\sigma\rangle+\langle\Pi\bar{\sigma},\bar{\sigma}\rangle+2\langle\eta,b\rangle\\
    &\qquad\quad-\langle\hat{\mathcal{R}}_{22}^{-1}\big(\hat{\mathcal{R}}_{12}^\top u^*+\hat{\mathcal{R}}_{22}\hat{v}^*+\hat{\mathcal{S}}_2\hat{x}\big),\hat{\mathcal{R}}_{12}^\top u^*+\hat{\mathcal{R}}_{22}\hat{v}^*+\hat{\mathcal{S}}_2\hat{x}\rangle\\
    &\qquad\quad-\big\langle(\hat{\mathcal{R}}_{11}-\hat{\mathcal{R}}_{12}\hat{\mathcal{R}}_{22}^{-1}\hat{\mathcal{R}}_{12}^\top)^{-1}\big[\big(\hat{\mathcal{R}}_{11}-\hat{\mathcal{R}}_{12}\hat{\mathcal{R}}_{22}^{-1}\hat{\mathcal{R}}_{12}^\top\big)u^*
     +\big(\hat{\mathcal{S}}_1-\hat{\mathcal{R}}_{12}\hat{\mathcal{R}}_{22}^{-1}\hat{\mathcal{S}}_2\big)\hat{x}\big],\\
    &\qquad\qquad\big(\hat{\mathcal{R}}_{11}-\hat{\mathcal{R}}_{12}\hat{\mathcal{R}}_{22}^{-1}\hat{\mathcal{R}}_{12}^\top\big)u^*+\big(\hat{\mathcal{S}}_1
     -\hat{\mathcal{R}}_{12}\hat{\mathcal{R}}_{22}^{-1}\hat{\mathcal{S}}_2\big)\hat{x}\big\rangle\Big]ds\bigg\}\\
    &=\mathbb{E}\bigg\{\langle \Pi(t,i)\tilde{\xi},\tilde{\xi}\rangle+\langle P(t,i)\hat{\xi},\hat{\xi}\rangle+2\langle\eta(t,i),\hat{\xi}\rangle+\int_{t}^{T}\Big[\langle P\sigma,\sigma\rangle+\langle\Pi\bar{\sigma},\bar{\sigma}\rangle+2\langle\eta,b\rangle\\
    &\qquad\quad-\big\langle\big[\hat{\mathcal{S}}_2^\top\hat{\mathcal{R}}_{22}^{-1}\hat{\mathcal{S}}_2+\big(\hat{\mathcal{S}}_1-\hat{\mathcal{R}}_{12}\hat{\mathcal{R}}_{22}^{-1}\hat{\mathcal{S}}_2\big)^\top
     (\hat{\mathcal{R}}_{11}-\hat{\mathcal{R}}_{12}\hat{\mathcal{R}}_{22}^{-1}\hat{\mathcal{R}}_{12}^\top)^{-1}\big(\hat{\mathcal{S}}_1-\hat{\mathcal{R}}_{12}\hat{\mathcal{R}}_{22}^{-1}\hat{\mathcal{S}}_2\big)\big]\hat{x},\hat{x}\big\rangle\\
    &\qquad\quad-\langle\hat{\mathcal{R}}_{22}\hat{v}^*,\hat{v}^*\rangle-\langle\hat{\mathcal{R}}_{11}u^*,u^*\rangle-2\langle\hat{\mathcal{S}}_1\hat{x},u^*\rangle-2\langle\hat{\mathcal{S}}_2\hat{x},\hat{v}^*\rangle-2\langle\hat{\mathcal{R}}_{12}^\top u^*,\hat{v}^*\rangle\Big]ds\bigg\}.\\
\end{aligned}
\end{equation*}
Using Lemma \ref{lem2} (\romannumeral1) and the completion-of-squares method, we obtain
\begin{equation}\label{eq1}
\begin{aligned}
	&J_\gamma(t,\xi,i;u^*,v^*)=\mathbb{E}\bigg\{\langle \Pi(t,i)\tilde{\xi},\tilde{\xi}\rangle+\langle P(t,i)\hat{\xi},\hat{\xi}\rangle+2\langle\eta(t,i),\hat{\xi}\rangle\\ 
    &\qquad+\int_t^T\Big[\langle P\sigma,\sigma\rangle+\langle\Pi\bar{\sigma},\bar{\sigma}\rangle+2\langle\eta,b\rangle-\langle\hat{\mathcal{S}}^\top\hat{\mathcal{R}}^{-1}\hat{\mathcal{S}}\hat{x},\hat{x}\rangle
     -\langle\hat{\mathcal{R}}_{22}\hat{v}^*,\hat{v}^*\rangle\\
    &\qquad-\langle\hat{\mathcal{R}}_{11}u^*,u^*\rangle-2\langle\hat{\mathcal{S}}_1\hat{x},u^*\rangle-2\langle\hat{\mathcal{S}}_2\hat{x},\hat{v}^*\rangle-2\langle\hat{\mathcal{R}}_{12}^\top u^*,\hat{v}^*\rangle\Big]ds\bigg\}\\
	&=\mathbb{E}\bigg\{\langle \Pi(t,i)\tilde{\xi},\tilde{\xi}\rangle+\langle P(t,i)\hat{\xi},\hat{\xi}\rangle+2\langle\eta(t,i),\hat{\xi}\rangle
     +\int_t^T\Big[\langle P\sigma,\sigma\rangle+\langle\Pi\bar{\sigma},\bar{\sigma}\rangle\\
	&\qquad+2\langle\eta,b\rangle-\langle\hat{\mathcal{S}}^\top\hat{\mathcal{R}}^{-1}\hat{\mathcal{S}}\hat{x},\hat{x}\rangle-\bigg\langle\hat{\mathcal{R}}\begin{pmatrix}
		u^* \\ \hat{v}^*
	\end{pmatrix},\begin{pmatrix}
	u^* \\ \hat{v}^*
	\end{pmatrix}\bigg\rangle-2\bigg\langle\hat{\mathcal{S}}\hat{x},\begin{pmatrix}
	u^* \\ \hat{v}^*
	\end{pmatrix}\bigg\rangle\Big]ds\bigg\}\\
	&=\mathbb{E}\bigg\{\langle \Pi(t,i)\tilde{\xi},\tilde{\xi}\rangle+\langle P(t,i)\hat{\xi},\hat{\xi}\rangle+2\langle\eta(t,i),\hat{\xi}\rangle
     +\int_t^T\Big[\langle P\sigma,\sigma\rangle+\langle\Pi\bar{\sigma},\bar{\sigma}\rangle\\
	&\qquad+2\langle\eta,b\rangle-\bigg\langle\hat{\mathcal{R}}\Big(\begin{pmatrix}
		u^* \\ \hat{v}^*
	\end{pmatrix}+\hat{\mathcal{R}}^{-1}\hat{\mathcal{S}}\hat{x}\Big),\begin{pmatrix}
	u^* \\ \hat{v}^*
	\end{pmatrix}+\hat{\mathcal{R}}^{-1}\hat{\mathcal{S}}\hat{x}\bigg\rangle\Big]ds\bigg\}.\\
\end{aligned}
\end{equation}
Note that (\ref{algebra eq}) is equivalent to
\begin{equation*}
	\hat{\mathcal{R}}\begin{pmatrix}
		u \\ \hat{v}\end{pmatrix}+\hat{\mathcal{S}}\hat{x}+B^\top\eta+D^\top P\sigma+\bar{D}^\top\Pi\bar{\sigma}+\rho=0.
\end{equation*}
Since the matrix $\hat{\mathcal{R}}$ is invertible under Condition (\uppercase\expandafter{\romannumeral1}$\&$\uppercase\expandafter{\romannumeral2}), it follows that
\begin{equation*}
	\begin{pmatrix}
		u \\ \hat{v}\end{pmatrix}+\hat{\mathcal{R}}^{-1}\hat{\mathcal{S}}\hat{x}=-\hat{\mathcal{R}}^{-1}\big(B^\top\eta+D^\top P\sigma+\bar{D}^\top\Pi\bar{\sigma}+\rho\big).
\end{equation*}
Substituting the above equation into (\ref{eq1}) yields (\ref{value}), i.e.,
\begin{equation*}
\begin{aligned}
	J_\gamma(t,\xi,i;u^*,v^*)&=\mathbb{E}\bigg\{\langle \Pi(t,i)\tilde{\xi},\tilde{\xi}\rangle+\langle P(t,i)\hat{\xi},\hat{\xi}\rangle+2\langle\eta(t,i),\hat{\xi}\rangle+\int_{t}^{T}\Big[\langle P\sigma,\sigma\rangle+\langle\Pi\bar{\sigma},\bar{\sigma}\rangle+2\langle\eta,b\rangle\\
	&\qquad-\langle\hat{\mathcal{R}}^{-1}\big(B^\top\eta+D^\top P\sigma+\bar{D}^\top\Pi\bar{\sigma}+\rho\big),B^\top\eta+D^\top P\sigma+\bar{D}^\top\Pi\bar{\sigma}+\rho\rangle\Big]ds\bigg\},\\
\end{aligned}
\end{equation*}
where $\eta$ is the solution of (\ref{eta1}).

Next, we proof (\ref{eta1}) (or (\ref{eta2})) is equivalent to (\ref{eta}), which is a more compact equation.
\begin{equation*}
\begin{aligned}
	&\quad -\hat{\mathcal{S}}_2^\top\hat{\mathcal{R}}_{22}^{-1}\psi-\big(\hat{\mathcal{S}}_1-\hat{\mathcal{R}}_{12}\hat{\mathcal{R}}_{22}^{-1}\hat{\mathcal{S}}_2\big)^\top\big(\hat{\mathcal{R}}_{11}
     -\hat{\mathcal{R}}_{12}\hat{\mathcal{R}}_{22}^{-1}\hat{\mathcal{R}}_{12}^\top\big)^{-1}\varphi\\
	&=-\hat{\mathcal{S}}_2^\top\hat{\mathcal{R}}_{22}^{-1}\big(B_2^\top\eta+D_2^\top P\sigma+\bar{D}_2^\top\Pi\bar{\sigma}+\rho_2\big)-\big(\hat{\mathcal{S}}_1-\hat{\mathcal{R}}_{12}\hat{\mathcal{R}}_{22}^{-1}\hat{\mathcal{S}}_2\big)^\top
     \big(\hat{\mathcal{R}}_{11}-\hat{\mathcal{R}}_{12}\hat{\mathcal{R}}_{22}^{-1}\hat{\mathcal{R}}_{12}^\top\big)^{-1}\\
	&\quad\times\big[B_1^\top\eta+D_1^\top P\sigma+\bar{D}_1^\top\Pi\bar{\sigma}+\rho_1-\hat{\mathcal{R}}_{12}\hat{\mathcal{R}}_{22}^{-1}\big(B_2^\top\eta+D_2^\top P\sigma+\bar{D}_2^\top\Pi\bar{\sigma}+\rho_2\big)\big]\\
    &=-\big(\hat{\mathcal{S}}_1-\hat{\mathcal{R}}_{12}\hat{\mathcal{R}}_{22}^{-1}\hat{\mathcal{S}}_2\big)^\top\big(\hat{\mathcal{R}}_{11}-\hat{\mathcal{R}}_{12}\hat{\mathcal{R}}_{22}^{-1}\hat{\mathcal{R}}_{12}^\top\big)^{-1}\big(B_1^\top\eta+D_1^\top P\sigma+\bar{D}_1^\top\Pi\bar{\sigma}+\rho_1\big)\\
    &\quad+\big[\big(\hat{\mathcal{S}}_1-\hat{\mathcal{R}}_{12}\hat{\mathcal{R}}_{22}^{-1}\hat{\mathcal{S}}_2\big)^\top\big(\hat{\mathcal{R}}_{11}
     -\hat{\mathcal{R}}_{12}\hat{\mathcal{R}}_{22}^{-1}\hat{\mathcal{R}}_{12}^\top\big)^{-1}\hat{\mathcal{R}}_{12}\hat{\mathcal{R}}_{22}^{-1}-\hat{\mathcal{S}}_2^\top\hat{\mathcal{R}}_{22}^{-1}\big]\\
    &\quad\times\big(B_2^\top\eta+D_2^\top P\sigma+\bar{D}_2^\top\Pi\bar{\sigma}+\rho_2\big)\\
    &=\big(\hat{\mathcal{S}}_1-\hat{\mathcal{R}}_{12}\hat{\mathcal{R}}_{22}^{-1}\hat{\mathcal{S}}_2\big)^\top\big(\hat{\mathcal{R}}_{11}-\hat{\mathcal{R}}_{12}\hat{\mathcal{R}}_{22}^{-1}\hat{\mathcal{R}}_{12}^\top\big)^{-1}\big(\hat{\mathcal{R}}_{11}u^*+\hat{\mathcal{R}}_{12}\hat{v}^*+\hat{\mathcal{S}}_1\hat{x}\big)\\
    &\quad+\big[\hat{\mathcal{S}}_2^\top\hat{\mathcal{R}}_{22}^{-1}-\big(\hat{\mathcal{S}}_1-\hat{\mathcal{R}}_{12}\hat{\mathcal{R}}_{22}^{-1}\hat{\mathcal{S}}_2\big)^\top\big(\hat{\mathcal{R}}_{11}-\hat{\mathcal{R}}_{12}\hat{\mathcal{R}}_{22}^{-1}\hat{\mathcal{R}}_{12}^\top\big)^{-1}\hat{\mathcal{R}}_{12}\hat{\mathcal{R}}_{22}^{-1}\big]\big(\hat{\mathcal{R}}_{12}^\top u^*+\hat{\mathcal{R}}_{22}\hat{v}^*+\hat{\mathcal{S}}_2\hat{x}\big)\\
    &=\big[\hat{\mathcal{S}}_2^\top\hat{\mathcal{R}}_{22}^{-1}\hat{\mathcal{S}}_2+\big(\hat{\mathcal{S}}_1-\hat{\mathcal{R}}_{12}\hat{\mathcal{R}}_{22}^{-1}\hat{\mathcal{S}}_2\big)^\top( \hat{\mathcal{R}}_{11}-\hat{\mathcal{R}}_{12}\hat{\mathcal{R}}_{22}^{-1}\hat{\mathcal{R}}_{12}^\top)^{-1}\big(\hat{\mathcal{S}}_1-\hat{\mathcal{R}}_{12}\hat{\mathcal{R}}_{22}^{-1}\hat{\mathcal{S}}_2\big)\big]\hat{x}\\
    &\quad+\hat{\mathcal{S}}_1^\top u^*+\hat{\mathcal{S}}_2^\top\hat{v}^*\\
    &=-\hat{\mathcal{S}}^\top\hat{\mathcal{R}}^{-1}\big(B^\top\eta+D^\top P\sigma+\bar{D}^\top\Pi\bar{\sigma}+\rho\big).
\end{aligned}
\end{equation*}
With the help of the above equation, we can obtain (\ref{eta}) through simplification. This completes the proof.
\end{proof}

In what follows, we show that optimal feedback control-strategy pair $(u^*(\cdot),v^*(\cdot))\in\mathcal{U}[t,T]\times\mathcal{V}[t,T]$ for the zero-sum LQ stochastic differential game presented in Theorem \ref{Thm-OCS}, constitutes the outcome of the closed-loop saddle point for Problem (SCG).

\begin{mythm}\label{Thm-CL}
Let (H1)-(H3) hold, for any disturbance attenuation level $\gamma>\gamma^*$, assume the Riccati equations (\ref{Pi}) and (\ref{P}) admit a solution $(\Pi(\cdot,\cdot),P(\cdot,\cdot))\in C([t,T]\times\mathcal{S};\mathbb{S}^n)\times C([t,T]\times\mathcal{S};\mathbb{S}^n)$ satisfying Condition (\uppercase\expandafter{\romannumeral1}$\&$\uppercase\expandafter{\romannumeral2}), and $\eta(\cdot,\cdot)\in C([t,T]\times\mathcal{S};\mathbb{R}^n)$ is the solution to the BODE (\ref{eta}). Then Problem (SCG) admits a closed-loop saddle point $(\hat{\Theta}^*(\cdot,\alpha(\cdot)),\tilde{\Theta}^*(\cdot,\alpha(\cdot)),\bar{v}^*(\cdot))\in\mathcal{Q}[t,T]$ with $\hat{\Theta}^*(\cdot,\alpha(\cdot))\equiv(\hat{\Theta}_1^*(\cdot,\alpha(\cdot))^\top,\hat{\Theta}_2^*(\cdot,\alpha(\cdot))^\top)^\top$, $\tilde{\Theta}^*(\cdot,\alpha(\cdot))\equiv(0,\tilde{\Theta}_2^*(\cdot,\alpha(\cdot))^\top)^\top$, $\bar{v}^*(\cdot)\equiv(v_1^*(\cdot)^\top,v_2^*(\cdot)^\top)^\top$, which admits the following representation:
\begin{equation}\label{CLSP}
\left\{\begin{aligned}
	\hat{\Theta}^*(s,\alpha(s))&=-\hat{\mathcal{R}}(s,\alpha(s))^{-1}\hat{\mathcal{S}}(s,\alpha(s)),\\
	\tilde{\Theta}_2^*(s,\alpha(s))&=-\bar{\mathcal{R}}_2(s,\alpha(s))^{-1}\bar{\mathcal{S}}_2(s,\alpha(s)),\\
	\bar{v}^*(s)&=-\hat{\mathcal{R}}(s,\alpha(s))^{-1}\big[B(s,\alpha(s))^\top\eta(s,\alpha(s))+D(s,\alpha(s))^\top P(s,\alpha(s))\sigma(s,\alpha(s))\\
	&\quad+\bar{D}(s,\alpha(s))^\top\Pi(s,\alpha(s))\bar{\sigma}(s,\alpha(s))+\rho(s,\alpha(s))\big],
\end{aligned}\right.
\end{equation}
and the value function is
\begin{equation*}
\begin{aligned}
	V_\gamma(t,\xi,i)&=\mathbb{E}\bigg\{\langle \Pi(t,i)\tilde{\xi},\tilde{\xi}\rangle+\langle P(t,i)\hat{\xi},\hat{\xi}\rangle+2\langle\eta(t,i),\hat{\xi}\rangle+\int_{t}^{T}\Big[\langle P(s,\alpha(s))\sigma(s,\alpha(s)),\sigma(s,\alpha(s))\rangle\\
	&\qquad+\langle\Pi(s,\alpha(s))\bar{\sigma}(s,\alpha(s)),\bar{\sigma}(s,\alpha(s))\rangle+2\langle\eta(s,\alpha(s)),b(s,\alpha(s))\rangle\\
	&\qquad-\langle\hat{\mathcal{R}}(s,\alpha(s))^{-1}\Psi(s,\alpha(s)),\Psi(s,\alpha(s))\rangle\Big]ds\bigg\}.
\end{aligned}
\end{equation*}
\end{mythm}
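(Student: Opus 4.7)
The plan is to verify the equivalent form (\ref{ineq-CL'}) recorded in Remark \ref{Remark1} by directly reusing the completion-of-squares identity (\ref{J}) established in the proof of Lemma \ref{lem3}. That identity, valid for every admissible $(u,v)$, expresses $J_\gamma(t,\xi,i;u,v)$ as a $(t,\xi,i)$-deterministic constant plus three signed-definite quadratic residuals in $(\tilde{v}-\tilde{v}^*)$, $(u-u^*)$ and $(\hat{v}-\hat{\alpha}_2^*(u))$, whose coefficients $\bar{\mathcal{R}}_2$, $\hat{\mathcal{R}}_{11}-\hat{\mathcal{R}}_{12}\hat{\mathcal{R}}_{22}^{-1}\hat{\mathcal{R}}_{12}^\top$ and $\hat{\mathcal{R}}_{22}$ are all signed under Condition~(\uppercase\expandafter{\romannumeral1}\&\uppercase\expandafter{\romannumeral2}). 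Throughout I will use the filtering observation that, under the closed-loop Player~2 response $v=\hat{\Theta}_2^*\hat{x}+\tilde{\Theta}_2^*\tilde{x}+v_2^*$, the $\mathcal{G}_s$-measurability of $\hat{\Theta}_2^*\hat{x}+v_2^*$ together with $\mathbb{E}[\tilde{x}\mid\mathcal{G}_s]=0$ forces $\hat{v}=\hat{\Theta}_2^*\hat{x}+v_2^*$ and $\tilde{v}=\tilde{\Theta}_2^*\tilde{x}$, which matches exactly the feedback optima $\hat{v}^*(\hat{x})$ and $\tilde{v}^*(\tilde{x})$ appearing in (\ref{J}); I will also invoke the reciprocity $u^*=\alpha_1^*(\hat{v}^*)$, $\hat{v}^*=\hat{\alpha}_2^*(u^*)$ supplied by Theorem \ref{Thm-OCS}.

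For the left inequality of (\ref{ineq-CL'}) I would fix Player~1 at the closed-loop form $u=u^*(\hat{x})=\hat{\Theta}_1^*\hat{x}+v_1^*$ and let Player~2 play $\hat{\Theta}_2^*\hat{x}+\tilde{\Theta}_2^*\tilde{x}+v_2$ with $v_2\in L_{\mathbb{F}}^2(t,T;\mathbb{R}^{n_v})$ arbitrary. Then $u-u^*\equiv 0$ kills the second residual in (\ref{J}); using $\hat{v}^*=\hat{\alpha}_2^*(u^*)$ the third residual reduces to $\hat{v}-\hat{v}^*=v_2-v_2^*$; and $\tilde{v}-\tilde{v}^*=0$ trivially. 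Consequently (\ref{J}) collapses to the constant $V_\gamma(t,\xi,i)$ plus $\mathbb{E}\int_t^T\langle\hat{\mathcal{R}}_{22}(v_2-v_2^*),v_2-v_2^*\rangle ds\le 0$, the sign coming from $\hat{\mathcal{R}}_{22}\ll 0$. This yields the left inequality, with equality iff $v_2\equiv v_2^*$.

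For the right inequality I instead fix Player~2 at the closed-loop response and let Player~1's $u\in L_{\mathbb{G}}^2(t,T;\mathbb{R}^m)$ be arbitrary. The first residual vanishes as before, but the third one does not, because $\hat{\alpha}_2^*$ depends on $u$. The key step is to exploit the affine-in-$u$ structure of $\hat{\alpha}_2^*$ in (\ref{OCS-1}) together with $\hat{v}^*=\hat{\alpha}_2^*(u^*)$ to obtain the identity
\[
\hat{v}-\hat{\alpha}_2^*(u)=\hat{v}^*-\hat{\alpha}_2^*(u)=\hat{\mathcal{R}}_{22}^{-1}\hat{\mathcal{R}}_{12}^\top(u-u^*).
\]
Substituting this into the third term of (\ref{J}) and adding to the second term via the trivial identity $(\hat{\mathcal{R}}_{11}-\hat{\mathcal{R}}_{12}\hat{\mathcal{R}}_{22}^{-1}\hat{\mathcal{R}}_{12}^\top)+\hat{\mathcal{R}}_{12}\hat{\mathcal{R}}_{22}^{-1}\hat{\mathcal{R}}_{12}^\top=\hat{\mathcal{R}}_{11}$, the two residuals telescope into $\mathbb{E}\int_t^T\langle\hat{\mathcal{R}}_{11}(u-u^*),u-u^*\rangle ds\ge 0$, with positivity due to $\hat{\mathcal{R}}_{11}\gg 0$. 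This proves the right inequality, with equality iff $u\equiv u^*$, so $(\hat{\Theta}^*,\tilde{\Theta}^*,\bar{v}^*)$ is indeed a closed-loop saddle point.

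Finally, evaluating (\ref{J}) along the saddle outcome $(u^*,v^*)$ annihilates all three quadratic residuals, leaving exactly the constant already computed in (\ref{value}) of Theorem \ref{Thm-OCS}, which identifies the value function $V_\gamma(t,\xi,i)$. The main obstacle I anticipate is the algebra in the upper-bound step: deriving the identity $\hat{v}-\hat{\alpha}_2^*(u)=\hat{\mathcal{R}}_{22}^{-1}\hat{\mathcal{R}}_{12}^\top(u-u^*)$ requires careful use of (\ref{OCS-1}) together with the algebraic system (\ref{algebra eq}), and one must be vigilant that $u^*$ is the feedback evaluated along the current trajectory $\hat{x}$, which itself depends on $u$ through the filtering equation (\ref{hat_x}). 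Once this identity is secured, the remaining sign estimates follow transparently from Condition~(\uppercase\expandafter{\romannumeral1}\&\uppercase\expandafter{\romannumeral2}).
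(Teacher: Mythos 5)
Your argument is correct in substance and rests on the same mechanism as the paper's proof --- a completion-of-squares identity whose quadratic residuals are signed by Condition (\uppercase\expandafter{\romannumeral1}$\&$\uppercase\expandafter{\romannumeral2}) --- but it is organized differently. The paper re-runs the It\^o computation for an arbitrary pair $\bar{u}=(u^\top,v^\top)^\top$ and arrives at a \emph{joint} residual $\langle\hat{\mathcal{R}}(\hat{\bar{u}}-\hat{\Theta}^*\hat{x}-\bar{v}^*),\hat{\bar{u}}-\hat{\Theta}^*\hat{x}-\bar{v}^*\rangle$ plus $\langle\bar{\mathcal{R}}_2(\tilde{v}-\tilde{\Theta}_2^*\tilde{x}),\tilde{v}-\tilde{\Theta}_2^*\tilde{x}\rangle$; fixing one player's deviation to zero then isolates the diagonal block $\hat{\mathcal{R}}_{11}$ or $\hat{\mathcal{R}}_{22}$ with no further algebra. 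You instead recycle the \emph{nested} identity (\ref{J}) from Lemma \ref{lem3}, which obliges you to prove the telescoping relation $\hat{v}-\hat{\alpha}_2^*(u)=\hat{\mathcal{R}}_{22}^{-1}\hat{\mathcal{R}}_{12}^\top(u-u^*)$ and then recombine via $(\hat{\mathcal{R}}_{11}-\hat{\mathcal{R}}_{12}\hat{\mathcal{R}}_{22}^{-1}\hat{\mathcal{R}}_{12}^\top)+\hat{\mathcal{R}}_{12}\hat{\mathcal{R}}_{22}^{-1}\hat{\mathcal{R}}_{12}^\top=\hat{\mathcal{R}}_{11}$. That relation does hold: it is the second line of (\ref{algebra eq}) read off along the current trajectory together with $\hat{v}^*=\hat{\alpha}_2^*(u^*)$ from Theorem \ref{Thm-OCS}, and all feedback targets are evaluated at the same $\hat{x}$, so the state-dependence you worry about causes no circularity. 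Your route therefore closes; its only cost is the extra block-matrix bookkeeping that the paper's joint form obtains for free from Lemma \ref{lem2}.

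One imprecision to repair in the left inequality: since $v_2\in L_{\mathbb{F}}^2(t,T;\mathbb{R}^{n_v})$ is not assumed $\mathbb{G}$-adapted, the perturbed disturbance satisfies $\tilde{v}=\tilde{\Theta}_2^*\tilde{x}+\tilde{v}_2$ and $\hat{v}=\hat{\Theta}_2^*\hat{x}+\hat{v}_2$, so the first residual of (\ref{J}) does not vanish ``trivially'' and the third equals $\hat{v}_2-v_2^*$ rather than $v_2-v_2^*$. The correct correction term is $\mathbb{E}\int_t^T\big[\langle\bar{\mathcal{R}}_2\tilde{v}_2,\tilde{v}_2\rangle+\langle\hat{\mathcal{R}}_{22}(\hat{v}_2-v_2^*),\hat{v}_2-v_2^*\rangle\big]ds$, which is still nonpositive and still vanishes iff $v_2=v_2^*$ (because $v_2^*$ is $\mathbb{G}$-adapted), so the inequality and the equality characterization survive; this is exactly the form the paper retains in its own proof.
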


\begin{proof}
We take any $\bar{u}(\cdot)\equiv(u(\cdot)^\top,v(\cdot)^\top)^\top\in L_{\mathbb{G}}^2(t,T;\mathbb{R}^{m})\times L_{\mathbb{F}}^2(t,T;\mathbb{R}^{n_v})$, and denote $\hat{\bar{u}}(\cdot)\equiv(u(\cdot)^\top,\hat{v}(\cdot)^\top)^\top$, $\tilde{\bar{u}}(\cdot)\equiv(0,\tilde{v}(\cdot)^\top)^\top$. Let $x(\cdot)\equiv x(\cdot;t,\xi,i,\bar{u}(\cdot))$ be the corresponding state process, then let $\hat{x}(\cdot)\equiv\hat{x}(\cdot;t,\hat{\xi},i,\hat{\bar{u}}(\cdot))$, $\tilde{x}(\cdot)\equiv\tilde{x}(\cdot;t,\tilde{\xi},i,\bar{u}(\cdot))$ be the filtering process and the difference, which satisfy the following SDEs, respectively:
\begin{equation*}
\left\{\begin{aligned}
	d\hat{x}(s)=&\big[A(s,\alpha(s))\hat{x}(s)+B(s,\alpha(s))\hat{\bar{u}}(s)+b(s,\alpha(s))\big]ds\\
    &+\big[C(s,\alpha(s))\hat{x}(s)+D(s,\alpha(s))\hat{\bar{u}}(s)+\sigma(s,\alpha(s))\big]dW(s),\\
\hat{x}(t)=&\ \hat{\xi},\quad\alpha(t)=i,
\end{aligned}\right.
\end{equation*}
\begin{equation*}
\left\{\begin{aligned}
	d\tilde{x}(s)=&\big[A(s,\alpha(s))\tilde{x}(s)+B(s,\alpha(s))\tilde{\bar{u}}(s)\big]ds+\big[C(s,\alpha(s))\tilde{x}(s)+D(s,\alpha(s))\tilde{\bar{u}}(s)\big]dW(s)\\
	&+\big[\bar{C}(s,\alpha(s))x(s)+\bar{D}(s,\alpha(s))\bar{u}(s)+\bar{\sigma}(s,\alpha(s))\big]d\overline{W}(s),\\
	\tilde{x}(t)=&\ \tilde{\xi},\quad\alpha(t)=i,
\end{aligned}\right.
\end{equation*}
Moreover, the cost functional is 
\begin{equation*}
\begin{aligned}
	&J_\gamma(t,\xi,i;\bar{u}(\cdot))=\mathbb{E}\bigg\{\langle G(T,\alpha(T))x(T),x(T)\rangle+2\langle g(T,\alpha(T)),x(T)\rangle\\
	&\qquad+\int_t^T\Big[\langle Q(s,\alpha(s))x(s),x(s)\rangle+\langle R_\gamma(s,\alpha(s))\bar{u}(s),\bar{u}(s)\rangle \\
	&\qquad\qquad+2\langle S(s,\alpha(s))x(s),\bar{u}(s)\rangle+2\langle q(s,\alpha(s)),x(s)\rangle+2\langle \rho(s,\alpha(s)),\bar{u}(s)\rangle\Big]ds\bigg\}\\
	&=\mathbb{E}\bigg\{\langle G(T,\alpha(T))\tilde{x}(T),\tilde{x}(T)\rangle+\int_t^T\Big[  \langle Q(s,\alpha(s))\tilde{x}(s),\tilde{x}(s)\rangle\\
	&\qquad\qquad+\langle R_\gamma(s,\alpha(s))\tilde{\bar{u}}(s),\tilde{\bar{u}}(s)\rangle+2\langle S(s,\alpha(s))\tilde{x}(s),\tilde{\bar{u}}(s)\rangle\Big]ds\bigg\}\\
	&\qquad\;+\mathbb{E}\bigg\{\langle G(T,\alpha(T))\hat{x}(T),\hat{x}(T)\rangle+2\langle g(T,\alpha(T)),\hat{x}(T)\rangle\\
	&\qquad\qquad+\int_t^T\Big[  \langle Q(s,\alpha(s))\hat{x}(s),\hat{x}(s)\rangle+\langle R_\gamma(s,\alpha(s))\hat{\bar{u}}(s),\hat{\bar{u}}(s)\rangle \\
	&\qquad\qquad\qquad+2\langle S(s,\alpha(s))\hat{x}(s),\hat{\bar{u}}(s)\rangle+2\langle q(s,\alpha(s)),\hat{x}(s)\rangle+2\langle \rho(s,\alpha(s)),\hat{\bar{u}}(s)\rangle\Big]ds\bigg\}.
\end{aligned}
\end{equation*}
Applying It\^{o}'s formula to $s\mapsto\langle\Pi(s,\alpha(s))\tilde{x}(s),\tilde{x}(s)\rangle$, $s\mapsto\langle P(s,\alpha(s))\hat{x}(s),\hat{x}(s)\rangle$ and $s\mapsto\langle \eta(s,\alpha(s)),\hat{x}(s)\rangle$, respectively, Integrating from $t$ to $T$, taking expectation, and substituting them into the cost functional $J_\gamma(t,\xi,i;\bar{u}(\cdot))$, by the completion-of-squares method, we have
\begin{equation*}
\begin{aligned}
	&J_\gamma(t,\xi,i;\bar{u}(\cdot))=\mathbb{E}\bigg\{\langle\Pi(t,i)\tilde{\xi},\tilde{\xi}\rangle+\langle P(t,i)\hat{\xi},\hat{\xi}\rangle+2\langle\eta(t,i),\hat{\xi}\rangle\\
    &\qquad +\int_t^T\Big[\langle\bar{\mathcal{S}}_2^\top\bar{\mathcal{R}}_2^{-1}\bar{\mathcal{S}}_2\tilde{x},\tilde{x}\rangle+\langle\bar{\mathcal{R}}_2\tilde{v},\tilde{v}\rangle
     +2\langle\bar{\mathcal{S}}_2\tilde{x},\tilde{v}\rangle+\langle\hat{\mathcal{S}}^\top\hat{\mathcal{R}}^{-1}\hat{\mathcal{S}}\hat{x},\hat{x}\rangle\\
	&\qquad\qquad +\langle\hat{\mathcal{R}}\hat{\bar{u}},\hat{\bar{u}}\rangle+2\langle\hat{\mathcal{S}}\hat{x},\hat{\bar{u}}\rangle
     +2\langle\hat{\mathcal{S}}^\top\hat{\mathcal{R}}^{-1}\big(B^\top\eta+D^\top P\sigma+\bar{D}^\top\Pi\bar{\sigma}+\rho\big),\hat{x}\rangle\\
	&\qquad\qquad +2\langle B^\top\eta+D^\top P\sigma+\bar{D}^\top\Pi\bar{\sigma}+\rho,\hat{\bar{u}}\rangle+\langle P\sigma,\sigma\rangle+\langle\Pi\bar{\sigma},\bar{\sigma}\rangle+2\langle\eta,b\rangle\Big]ds\bigg\}\\
	&=\mathbb{E}\bigg\{\langle\Pi(t,i)\tilde{\xi},\tilde{\xi}\rangle+\langle P(t,i)\hat{\xi},\hat{\xi}\rangle+2\langle\eta(t,i),\hat{\xi}\rangle+\int_t^T\Big[\langle P\sigma,\sigma\rangle+\langle\Pi\bar{\sigma},\bar{\sigma}\rangle+2\langle\eta,b\rangle\\
	&\qquad-\langle\hat{\mathcal{R}}^{-1}\big(B^\top\eta+D^\top P\sigma+\bar{D}^\top\Pi\bar{\sigma}+\rho\big),B^\top\eta+D^\top P\sigma+\bar{D}^\top\Pi\bar{\sigma}+\rho\rangle\\
	&\qquad+\big\langle\hat{\mathcal{R}}\big[\hat{\bar{u}}+\hat{\mathcal{R}}^{-1}\hat{\mathcal{S}}\hat{x}+\hat{\mathcal{R}}^{-1}\big(B^\top\eta+D^\top P\sigma+\bar{D}^\top\Pi\bar{\sigma}+\rho\big)\big],\\
	&\qquad\qquad\hat{\bar{u}}+\hat{\mathcal{R}}^{-1}\hat{\mathcal{S}}\hat{x}+\hat{\mathcal{R}}^{-1}\big(B^\top\eta+D^\top P\sigma+\bar{D}^\top\Pi\bar{\sigma}+\rho\big)\big\rangle\\
	&\qquad+\langle\bar{\mathcal{R}}_2\big(\tilde{v}+\bar{\mathcal{R}}_2^{-1}\bar{\mathcal{S}}_2\tilde{x}\big),\tilde{v}+\bar{\mathcal{R}}_2^{-1}\bar{\mathcal{S}}_2\tilde{x}\rangle\Big]ds\bigg\}\\
	&=\mathbb{E}\bigg\{\langle\Pi(t,i)\tilde{\xi},\tilde{\xi}\rangle+\langle P(t,i)\hat{\xi},\hat{\xi}\rangle+2\langle\eta(t,i),\hat{\xi}\rangle+\int_t^T\Big[\langle P\sigma,\sigma\rangle+\langle\Pi\bar{\sigma},\bar{\sigma}\rangle+2\langle\eta,b\rangle\\
	&\qquad-\langle\hat{\mathcal{R}}^{-1}\big(B^\top\eta+D^\top P\sigma+\bar{D}^\top\Pi\bar{\sigma}+\rho\big),B^\top\eta+D^\top P\sigma+\bar{D}^\top\Pi\bar{\sigma}+\rho\rangle\\
	&\qquad+\langle\bar{\mathcal{R}}_2\big(\tilde{v}-\tilde{\Theta}_2^*\tilde{x}\big),\tilde{v}-\tilde{\Theta}_2^*\tilde{x}\rangle+\langle\hat{\mathcal{R}}\big(\hat{\bar{u}}-\hat{\Theta}^*\hat{x}-\bar{v}^*\big),\hat{\bar{u}}
     -\hat{\Theta}^*\hat{x}-\bar{v}^*\rangle\Big]ds\bigg\}\\
	&=J_\gamma(t,\xi,i;\hat{\Theta}^*(\cdot,\alpha(\cdot))\hat{x}(\cdot)+\tilde{\Theta}^*(\cdot,\alpha(\cdot))\tilde{x}(\cdot)+\bar{v}^*(\cdot))\\
	&\quad+\mathbb{E}\int_t^T\Big[\langle\bar{\mathcal{R}}_2\big(\tilde{v}-\tilde{\Theta}_2^*\tilde{x}\big),\tilde{v}-\tilde{\Theta}_2^*\tilde{x}\rangle
     +\langle\hat{\mathcal{R}}\big(\hat{\bar{u}}-\hat{\Theta}^*\hat{x}-\bar{v}^*\big),\hat{\bar{u}}-\hat{\Theta}^*\hat{x}-\bar{v}^*\rangle\Big]ds.
\end{aligned}
\end{equation*}
Consequently,
\begin{equation*}
\begin{aligned}
	&J_\gamma(t,\xi,i;\hat{\Theta}^*_1\hat{x}+v_1,\hat{\Theta}^*_2\hat{x}+\tilde{\Theta}_2^*\tilde{x}+v_2^*)\\
	&=J_\gamma(t,\xi,i;\hat{\Theta}^*\hat{x}+\tilde{\Theta}^*\tilde{x}+\bar{v}^*)+\mathbb{E}\int_t^T\langle\hat{\mathcal{R}}_{11}\big(v_1-v_1^*\big),v_1-v_1^*\rangle ds.\\
\end{aligned}
\end{equation*}
Hence,
\begin{equation*}
	J_\gamma(t,\xi,i;\hat{\Theta}^*\hat{x}+\tilde{\Theta}^*\tilde{x}+\bar{v}^*)\leq J_\gamma(t,\xi,i;\hat{\Theta}^*_1\hat{x}+v_1,\hat{\Theta}^*_2\hat{x}+\tilde{\Theta}_2^*\tilde{x}+v_2^*),\quad\forall v_1\in L_{\mathbb{G}}^2(t,T;\mathbb{R}^{m}),
\end{equation*}
if and only if
\begin{equation*}
	\hat{\mathcal{R}}_{11}(s,\alpha(s))\geq0,\quad a.e.\; s\in[t,T].
\end{equation*}
Similarly,
\begin{equation*}
\begin{aligned}
	&J_\gamma(t,\xi,i;\hat{\Theta}^*_1\hat{x}+v_1^*,\hat{\Theta}^*_2\hat{x}+\tilde{\Theta}_2^*\tilde{x}+v_2)\\
	&=J_\gamma(t,\xi,i;\hat{\Theta}^*\hat{x}+\tilde{\Theta}^*\tilde{x}+\bar{v}^*)+\mathbb{E}\int_t^T\Big[\langle\bar{\mathcal{R}}_2\tilde{v}_2,\tilde{v}_2\rangle+\langle\hat{\mathcal{R}}_{22}\big(\hat{v}_2-v_2^*\big),\hat{v}_2-v_2^*\rangle\Big]ds.\\
\end{aligned}
\end{equation*}
Hence,
\begin{equation*}
	J_\gamma(t,\xi,i;\hat{\Theta}^*\hat{x}+\tilde{\Theta}^*\tilde{x}+\bar{v}^*)\geq J_\gamma(t,\xi,i;\hat{\Theta}^*_1\hat{x}+v_1^*,\hat{\Theta}^*_2\hat{x}+\tilde{\Theta}_2^*\tilde{x}+v_2),\quad\forall v_2\in L_{\mathbb{F}}^2(t,T;\mathbb{R}^{n_v}),
\end{equation*}
if and only if
\begin{equation*}
	\hat{\mathcal{R}}_{22}(s,\alpha(s))\leq0,\quad a.e.\; s\in[t,T].
\end{equation*}
From Remark \ref{Remark1}, we get that under Condition (\uppercase\expandafter{\romannumeral1}$\&$\uppercase\expandafter{\romannumeral2}), $(\hat{\Theta}^*,\tilde{\Theta}^*,\bar{v}^*)\in\mathcal{Q}[t,T]$ is a closed-loop saddle point of Problem (SCG).
\end{proof}

Theorem \ref{Thm-CL} provides the sufficient conditions for the existence of the closed-loop saddle point of Problem (SCG). Next, the following result is concerned with $H_\infty$ performance criterion of the outcome of the above closed-loop saddle point, which indicates that the outcome of the closed-loop saddle point $(u^*,v^*)\in\mathcal{U}[t,T]\times\mathcal{V}[t,T]$ constitutes the robust $H_\infty$ control and the worst-case disturbance for Problem (R-SCG).

From Lemma \ref{lem2} (\romannumeral1) and
\begin{equation*}
\begin{aligned}
    \hat{\Theta}_1^*(s,i)=-\big(\hat{\mathcal{R}}_{11}(s,i)-\hat{\mathcal{R}}_{12}(s,i)\hat{\mathcal{R}}_{22}(s,i)^{-1}\hat{\mathcal{R}}_{12}(s,i)^\top\big)^{-1}
    \big(\hat{\mathcal{S}}_1(s,i)-\hat{\mathcal{R}}_{12}(s,i)\hat{\mathcal{R}}_{22}(s,i)^{-1}\hat{\mathcal{S}}_2(s,i)\big),\\	
\end{aligned}
\end{equation*}
it follows that 
\begin{equation*}
\begin{aligned}
	-\hat{\mathcal{S}}^\top\hat{\mathcal{R}}^{-1}\hat{\mathcal{S}}=&-\hat{\mathcal{S}}_2^\top\hat{\mathcal{R}}_{22}^{-1}\hat{\mathcal{S}}_2
    -\big(\hat{\mathcal{S}}_1-\hat{\mathcal{R}}_{12}\hat{\mathcal{R}}_{22}^{-1}\hat{\mathcal{S}}_2\big)^\top\big(\hat{\mathcal{R}}_{11}-\hat{\mathcal{R}}_{12}\hat{\mathcal{R}}_{22}^{-1}
    \hat{\mathcal{R}}_{12}^\top\big)^{-1}\big(	\hat{\mathcal{S}}_1-\hat{\mathcal{R}}_{12}\hat{\mathcal{R}}_{22}^{-1}\hat{\mathcal{S}}_2\big)\\
	=&-\hat{\mathcal{S}}_2^\top\hat{\mathcal{R}}_{22}^{-1}\hat{\mathcal{S}}_2+\big(\hat{\mathcal{S}}_1-\hat{\mathcal{R}}_{12}\hat{\mathcal{R}}_{22}^{-1}\hat{\mathcal{S}}_2\big)^\top\hat{\Theta}_1^*
    +\hat{\Theta}_1^{*^\top}\big(\hat{\mathcal{S}}_1-\hat{\mathcal{R}}_{12}\hat{\mathcal{R}}_{22}^{-1}\hat{\mathcal{S}}_2\big)\\
	&+\hat{\Theta}_1^{*^\top}\big(\hat{\mathcal{R}}_{11}-\hat{\mathcal{R}}_{12}\hat{\mathcal{R}}_{22}^{-1}\hat{\mathcal{R}}_{12}^\top\big)\hat{\Theta}_1^*\\
	=&\ \hat{\mathcal{S}}_1^\top\hat{\Theta}_1^*+\hat{\Theta}_1^{*^\top}\hat{\mathcal{S}}_1+\hat{\Theta}_1^{*^\top}\hat{\mathcal{R}}_{11}\hat{\Theta}_1^*-\big(\hat{\mathcal{S}}_2
    +\hat{\mathcal{R}}_{12}^\top\hat{\Theta}_1^*\big)^\top\hat{\mathcal{R}}_{22}^{-1}\big(\hat{\mathcal{S}}_2+\hat{\mathcal{R}}_{12}^\top\hat{\Theta}_1^*\big)\\
	=&\ \big(PB_1+C^\top PD_1+\bar{C}^\top\Pi\bar{D}_1+S_1^\top\big)\hat{\Theta}_1^*+\hat{\Theta}_1^{*^\top}\big(B_1^\top P+D_1^\top PC+\bar{D}_1^\top\Pi\bar{C}+S_1\big)\\
	&+\hat{\Theta}_1^{*^\top}\big(R_1+D_1^\top PD_1+\bar{D}_1^\top\Pi\bar{D}_1\big)\hat{\Theta}_1^*-\mathbb{S}_2^\top\hat{\mathcal{R}}_{22}^{-1}\mathbb{S}_2,
\end{aligned}
\end{equation*}
where
\begin{equation*}
\begin{aligned}
	\mathbb{S}_2(s,i)&:=\hat{\mathcal{S}}_2(s,i)+\hat{\mathcal{R}}_{12}(s,i)^\top\hat{\Theta}_1^*(s,i)\\
	&=B_2(s,i)^\top P(s,i)+D_2(s,i)^\top P(s,i)\big(C(s,i)+D_1(s,i)\hat{\Theta}_1^*(s,i)\big)\\
    &\quad+\bar{D}_2(s,i)^\top\Pi(s,i)\big(\bar{C}(s,i)+\bar{D}_1(s,i)\hat{\Theta}_1^*(s,i)\big)+S_2(s,i).	
\end{aligned}
\end{equation*}

Therefore, through the above substitution, the Riccati equation (\ref{P}) can be rearranged as:
\begin{equation}\label{P'}
\left\{\begin{aligned}
	\dot{P}&(s,i)+P(s,i)\big(A(s,i)+B_1(s,i)\hat{\Theta}_1^*(s,i)\big)+\big(A(s,i)+B_1(s,i)\hat{\Theta}_1^*(s,i)\big)^\top P(s,i)\\
	&+\big(C(s,i)+D_1(s,i)\hat{\Theta}_1^*(s,i)\big)^\top P(s,i)\big(C(s,i)+D_1(s,i)\hat{\Theta}_1^*(s,i)\big)\\
	&+\big(\bar{C}(s,i)+\bar{D}_1(s,i)\hat{\Theta}_1^*(s,i)\big)^\top\Pi(s,i)\big(\bar{C}(s,i)+\bar{D}_1(s,i)\hat{\Theta}_1^*(s,i)\big)\\
	&+\hat{\Theta}_1^*(s,i)^\top R_1(s,i)\hat{\Theta}_1^*(s,i)+\hat{\Theta}_1^*(s,i)^\top S_1(s,i)+S_1(s,i)^\top\hat{\Theta}_1^*(s,i)+Q(s,i)\\
	&-\mathbb{S}_2(s,i)^\top\hat{\mathcal{R}}_{22}(s,i)^{-1}\mathbb{S}_2(s,i)+\sum_{j=1}^{D}\lambda_{ij}P(s,j)=0,\quad a.e.\; s\in[t,T],\\
	P&(T,i)=G(T,i),\quad i\in\mathcal{S}.
\end{aligned}\right.
\end{equation}

\begin{mythm}\label{Thm-R}
Let (H1)-(H3) hold, for any given disturbance attenuation level $\gamma>\gamma^*$, assume that the Riccati equations (\ref{Pi}) and (\ref{P'}) admit a solution $(\Pi(\cdot,\cdot),P(\cdot,\cdot))\in C([t,T]\times\mathcal{S};\mathbb{S}^n)\times C([t,T]\times\mathcal{S};\mathbb{S}^n)$ satisfying Condition (\uppercase\expandafter{\romannumeral1}$\&$\uppercase\expandafter{\romannumeral2}), then one of the outcome of the closed-loop saddle point $(\hat{\Theta}^*(\cdot,\alpha(\cdot)),\tilde{\Theta}^*(\cdot,\alpha(\cdot)),\bar{v}^*(\cdot))\in\mathcal{Q}[t,T]$ for Problem (SCG) 
\begin{equation*}
	u^*(s)=\hat{\Theta}_1^*(s,\alpha(s))\hat{x}(s)+v_1^*(s),\quad s\in[t,T],
\end{equation*}
satisfies $H_\infty$-performance, i.e., $\Vert\mathcal{L}_{u^*}\Vert<\gamma$.
\end{mythm}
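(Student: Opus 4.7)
The plan is to reduce $\|\mathcal{L}_{u^*}\|<\gamma$ to a quantitative bound of the form $J^0(0,0,i;u^*,v)\le(\gamma^2-\varepsilon)\mathbb{E}\int_0^T|v(s)|^2ds$ uniform over $v$, for some $\varepsilon>0$. Since the $H_\infty$ criterion is evaluated at $(t,\xi,i)=(0,0,i)$ with homogeneous data $b=\sigma=\bar{\sigma}=q=\rho=g=0$, the BODE (\ref{eta}) admits the zero solution $\eta\equiv 0$, so $\bar{v}^*\equiv 0$ in (\ref{CLSP}), and the outcome simplifies to $u^*(s)=\hat{\Theta}_1^*(s,\alpha(s))\hat{x}(s)$, where $\hat{x}$ is the filter state of the closed-loop system driven by $v$.

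First I would invoke the completion-of-squares identity (\ref{J}) already derived in the proof of Lemma \ref{lem3}. Under homogeneous zero-initial data, $\tilde{\xi}=\hat{\xi}=0$, $\psi=\varphi=0$, and the inner optimizer $\hat{\alpha}_2^*(u)$ at $u=u^*=\hat{\Theta}_1^*\hat{x}$ reduces to $\hat{v}^{**}:=-\hat{\mathcal{R}}_{22}^{-1}\mathbb{S}_2\hat{x}$, with $\mathbb{S}_2$ as defined just before (\ref{P'}). Setting $u=u^*$ collapses the middle square in (\ref{J}), leaving
\begin{equation*}
J_\gamma^0(0,0,i;u^*,v)=\mathbb{E}\int_0^T\Big[\langle\bar{\mathcal{R}}_2(\tilde{v}-\tilde{v}^*),\tilde{v}-\tilde{v}^*\rangle+\langle\hat{\mathcal{R}}_{22}(\hat{v}-\hat{v}^{**}),\hat{v}-\hat{v}^{**}\rangle\Big]ds,
\end{equation*}
with $\tilde{v}^*=-\bar{\mathcal{R}}_2^{-1}\bar{\mathcal{S}}_2\tilde{x}$ as in (\ref{OCS-1}). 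By Condition (\uppercase\expandafter{\romannumeral1}\&\uppercase\expandafter{\romannumeral2}), both $\bar{\mathcal{R}}_2$ and $\hat{\mathcal{R}}_{22}$ are uniformly negative definite, hence there is $\delta>0$ with
\begin{equation*}
J_\gamma^0(0,0,i;u^*,v)\le-\delta\,\mathbb{E}\int_0^T\big[|\tilde{v}-\tilde{v}^*|^2+|\hat{v}-\hat{v}^{**}|^2\big]ds,
\end{equation*}
already yielding the non-strict bound $\|\mathcal{L}_{u^*}\|\le\gamma$.

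To upgrade to strict inequality, I would establish a coercivity estimate of the form $\mathbb{E}\int_0^T[|\tilde{v}-\tilde{v}^*|^2+|\hat{v}-\hat{v}^{**}|^2]ds\ge c_0\,\mathbb{E}\int_0^T|v|^2ds$ for some $c_0>0$ independent of $v$. Setting $\bar{v}:=\hat{v}-\hat{v}^{**}$ and $\bar{\tilde{v}}:=\tilde{v}-\tilde{v}^*$, the closed-loop filter equation (\ref{hat_x}) under $u=\hat{\Theta}_1^*\hat{x}$ becomes a linear SDE driven through the $W$-channel only by $\bar{v}$ with zero initial data, so standard Gronwall/BDG bounds give $\mathbb{E}\int_0^T|\hat{x}|^2ds\le C_1\mathbb{E}\int_0^T|\bar{v}|^2ds$. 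Plugging this into (\ref{tilde_x}) for $\tilde{x}$, whose $\overline{W}$-channel couples $\hat{x}$ and $\tilde{x}$ via $\bar{C}(\hat{x}+\tilde{x})+\bar{D}_1u^*+\bar{D}_2v$, yields $\mathbb{E}\int_0^T|\tilde{x}|^2ds\le C_2\mathbb{E}\int_0^T[|\bar{v}|^2+|\bar{\tilde{v}}|^2]ds$. Since $|\hat{v}^{**}|\le C|\hat{x}|$ and $|\tilde{v}^*|\le C|\tilde{x}|$, we obtain $\mathbb{E}\int_0^T|v|^2ds\le c_0^{-1}\mathbb{E}\int_0^T[|\bar{v}|^2+|\bar{\tilde{v}}|^2]ds$, and combining with the earlier bound gives $J^0(0,0,i;u^*,v)\le(\gamma^2-\delta c_0)\mathbb{E}\int_0^T|v|^2ds$, delivering $\|\mathcal{L}_{u^*}\|<\gamma$. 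The hard part will be this sequential Gronwall estimate: the $\tilde{x}$-equation is delicate because its $d\overline{W}$ coefficient mixes both $\hat{x}$ and $\tilde{x}$, so the $\hat{x}$-estimate must be closed first using the absence of an $\overline{W}$-term in (\ref{hat_x}), and only afterwards can $\tilde{x}$ be controlled by $(\hat{x},\bar{\tilde{v}})$.
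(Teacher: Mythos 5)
Your proposal is correct and follows essentially the same route as the paper: reduce to the homogeneous completion-of-squares identity with $\eta\equiv0$ (so $\bar v^*\equiv0$ and $u^*=\hat\Theta_1^*\hat x^0$), use the uniform negative definiteness of $\bar{\mathcal{R}}_2$ and $\hat{\mathcal{R}}_{22}$ under Condition (\uppercase\expandafter{\romannumeral1}$\&$\uppercase\expandafter{\romannumeral2}) to obtain $-J_\gamma^0(0,0,i;u^*,v)\ge\delta\,\mathbb{E}\int_0^T\big[|\tilde v+\bar{\mathcal{R}}_2^{-1}\bar{\mathcal{S}}_2\tilde x^0|^2+|\hat v+\hat{\mathcal{R}}_{22}^{-1}\mathbb{S}_2\hat x^0|^2\big]ds$, and then prove a coercivity bound of the right-hand side from below by $c_0\,\mathbb{E}\int_0^T|v|^2ds$. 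The only substantive difference is how that last coercivity step is implemented: the paper introduces the affine maps $\Gamma_1v=v+\bar{\mathcal{R}}_2^{-1}\bar{\mathcal{S}}_2\tilde x^0$ and $\Gamma_2\hat v=\hat v+\hat{\mathcal{R}}_{22}^{-1}\mathbb{S}_2\hat x^0$, exhibits their inverses through the closed-loop SDEs, invokes the bounded inverse theorem, and patches the mismatch between $\tilde v$ and $v$ with a Young-inequality parameter $\beta$; you instead run the sequential Gronwall estimates directly (closing the $\hat x$-estimate first, which is legitimate precisely because (\ref{hat_x}) carries no $\overline W$-term, and only then controlling $\tilde x$). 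The two are equivalent in substance — the boundedness of the paper's $\Gamma_i^{-1}$ rests on exactly the Gronwall estimates you describe — and your version avoids the cross-term/$\beta$ manipulation at the cost of writing those estimates out explicitly.
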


\begin{proof}
Substitute $u^*=\hat{\Theta}_1^*\hat{x}^0$ into the homogeneous system corresponding to (\ref{state eq}), we have
\begin{equation*}
\left\{\begin{aligned}
	dx^0(s)=&\big[A(s,\alpha(s))x^0(s)+B_1(s,\alpha(s))\hat{\Theta}_1^*(s,\alpha(s))\hat{x}^0(s)+B_2(s,\alpha(s))v(s)\big]ds\\
    &+\big[C(s,\alpha(s))x^0(s)+D_1(s,\alpha(s))\hat{\Theta}_1^*(s,\alpha(s))\hat{x}^0(s)+D_2(s,\alpha(s))v(s)\big]dW(s)\\
    &+\big[\bar{C}(s,\alpha(s))x^0(s)+\bar{D}_1(s,\alpha(s))\hat{\Theta}_1^*(s,\alpha(s))\hat{x}^0(s)+\bar{D}_2(s,\alpha(s))v(s)\big]d\overline{W}(s),\\
    x^0(t)=&\ \xi,\qquad\alpha(t)=i,
\end{aligned}\right.
\end{equation*}
and the corresponding filtering process and the difference satisfy the following SDEs, respectively,
\begin{equation*}
\left\{\begin{aligned}
	d\hat{x}^0(s)=&\big[\big(A(s,\alpha(s))+B_1(s,\alpha(s))\hat{\Theta}_1^*(s,\alpha(s))\big)\hat{x}^0(s)+B_2(s,\alpha(s))\hat{v}(s)\big]ds\\
	&+\big[\big(C(s,\alpha(s))+D_1(s,\alpha(s))\hat{\Theta}_1^*(s,\alpha(s))\big)\hat{x}^0(s)+D_2(s,\alpha(s))\hat{v}(s)\big]dW(s),\\
	\hat{x}^0(t)=&\ \hat{\xi},\quad\alpha(t)=i,
\end{aligned}\right.
\end{equation*}
\begin{equation*}
\left\{\begin{aligned}
	d\tilde{x}^0(s)=&\big[A(s,\alpha(s))\tilde{x}^0(s)+B_2(s,\alpha(s))\tilde{v}(s)\big]ds+\big[C(s,\alpha(s))\tilde{x}^0(s)+D_2(s,\alpha(s))\tilde{v}(s)\big]dW(s)\\
	&+\big[\bar{C}(s,\alpha(s))x^0(s)+\bar{D}_1(s,\alpha(s))\hat{\Theta}_1^*(s,\alpha(s))\hat{x}^0(s)+\bar{D}_2(s,\alpha(s))v(s)\big]d\overline{W}(s),\\
	\tilde{x}^0(t)=&\ \tilde{\xi},\quad\alpha(t)=i.
\end{aligned}\right.
\end{equation*}
The cost functional is given by
\begin{equation*}
\begin{aligned}
	&J_\gamma^0(t,\xi,i;u^*(\cdot),v(\cdot))=\mathbb{E}\bigg\{\langle G(T,\alpha(T))\tilde{x}^0(T),\tilde{x}^0(T)\rangle+\int_t^T\Big[  \langle Q(s,\alpha(s))\tilde{x}^0(s),\tilde{x}^0(s)\rangle\\
    &\qquad+2\langle S_2(s,\alpha(s))\tilde{x}^0(s),\tilde{v}(s)\rangle+\langle (R_2(s,\alpha(s))-\gamma^2I)\tilde{v}(s),\tilde{v}(s)\rangle\Big]ds\bigg\}\\
    &+\mathbb{E}\bigg\{\langle G(T,\alpha(T))\hat{x}^0(T),\hat{x}^0(T)\rangle+\int_t^T\Big[\langle\big(Q+\hat{\Theta}_1^{*^\top}R_1\hat{\Theta}_1^*+\hat{\Theta}_1^{*^\top}S_1+S_1^\top\hat{\Theta}_1^*\big)(s,\alpha(s))\hat{x}^0(s),\hat{x}^0(s)\rangle\\
    &\qquad+2\langle S_2(s,\alpha(s))\hat{x}^0(s),\hat{v}(s)\rangle+\langle (R_2(s,\alpha(s))-\gamma^2I)\hat{v}(s),\hat{v}(s)\rangle\Big]ds\bigg\}.
\end{aligned}
\end{equation*}
Similarly, by applying It\^{o}'s formula to $s\mapsto\langle\Pi(s,\alpha(s))\tilde{x}^0(s),\tilde{x}^0(s)\rangle$ and $s\mapsto\langle P(s,\alpha(s))\hat{x}^0(s),\hat{x}^0(s)\rangle$, we have
\begin{equation*}
\begin{aligned}
	&-J_\gamma^0(t,\xi,i;u^*(\cdot),v(\cdot))=\mathbb{E}\bigg\{-\langle\Pi(t,i)\tilde{\xi},\tilde{\xi}\rangle-\langle P(t,i)\hat{\xi},\hat{\xi}\rangle\\
	&\quad+\int_t^T\Big[ -\langle\bar{\mathcal{R}}_2\big(\tilde{v}+\bar{\mathcal{R}}_2^{-1}\bar{\mathcal{S}}_2\tilde{x}^0\big),\tilde{v}+\bar{\mathcal{R}}_2^{-1}\bar{\mathcal{S}}_2\tilde{x}^0\rangle-\langle\hat{\mathcal{R}}_{22}\big(\hat{v}+\hat{\mathcal{R}}_{22}^{-1}\mathbb{S}_2\hat{x}^0\big),\hat{v}+\hat{\mathcal{R}}_{22}^{-1}\mathbb{S}_2\hat{x}^0\rangle\Big]ds\bigg\}.\\
\end{aligned}
\end{equation*}
Notice that matrices $\bar{\mathcal{R}}_2$ and $\hat{\mathcal{R}}_{22}$ are uniformly negative definite, we obtain
\begin{equation}\label{ineq-end1}
\begin{aligned}
	&-J_\gamma^0(0,0,i;u^*(\cdot),v(\cdot))\\
	&=\mathbb{E}\int_0^T\Big[ -\langle\bar{\mathcal{R}}_2\big(\tilde{v}+\bar{\mathcal{R}}_2^{-1}\bar{\mathcal{S}}_2\tilde{x}^0\big),\tilde{v}+\bar{\mathcal{R}}_2^{-1}\bar{\mathcal{S}}_2\tilde{x}^0\rangle-\langle\hat{\mathcal{R}}_{22}\big(\hat{v}
    +\hat{\mathcal{R}}_{22}^{-1}\mathbb{S}_2\hat{x}^0\big),\hat{v}+\hat{\mathcal{R}}_{22}^{-1}\mathbb{S}_2\hat{x}^0\rangle\Big]ds\\
	&\geq\delta\mathbb{E}\int_0^T\Big[ \vert \tilde{v}+\bar{\mathcal{R}}_2^{-1}\bar{\mathcal{S}}_2\tilde{x}^0\vert^2+\vert\hat{v}+\hat{\mathcal{R}}_{22}^{-1}\mathbb{S}_2\hat{x}^0\vert^2\Big]ds\\
	&=\delta\mathbb{E}\int_0^T\Big[ \vert v- \hat{v}+\bar{\mathcal{R}}_2^{-1}\bar{\mathcal{S}}_2\tilde{x}^0\vert^2+\vert\hat{v}+\hat{\mathcal{R}}_{22}^{-1}\mathbb{S}_2\hat{x}^0\vert^2\Big]ds\\
	&=\delta\mathbb{E}\int_0^T\Big[ \vert v+\bar{\mathcal{R}}_2^{-1}\bar{\mathcal{S}}_2\tilde{x}^0\vert^2+\vert\hat{v}\vert^2-2\langle v
    +\bar{\mathcal{R}}_2^{-1}\bar{\mathcal{S}}_2\tilde{x}^0,\hat{v}\rangle +\vert\hat{v}+\hat{\mathcal{R}}_{22}^{-1}\mathbb{S}_2\hat{x}^0\vert^2\Big]ds\\
	&\geq\delta\mathbb{E}\int_0^T\Big[ \frac{\beta}{1+\beta}\vert v+\bar{\mathcal{R}}_2^{-1}\bar{\mathcal{S}}_2\tilde{x}^0\vert^2-\beta\vert\hat{v}\vert^2 +\vert\hat{v}+\hat{\mathcal{R}}_{22}^{-1}\mathbb{S}_2\hat{x}^0\vert^2\Big]ds.
\end{aligned}
\end{equation} 
Next, define a bounded linear operator $\Gamma_1:L_{\mathbb{F}}^2(0,T;\mathbb{R}^{n_v})\to L_{\mathbb{F}}^2(0,T;\mathbb{R}^{n_v})$,
\begin{equation*}
	(\Gamma_1v)(\cdot):=v(\cdot)+\bar{\mathcal{R}}_2(\cdot,\alpha(\cdot))^{-1}\bar{\mathcal{S}}_2(\cdot,\alpha(\cdot))\tilde{x}^0(\cdot).\\
\end{equation*}
Then $\Gamma_1$ is bijective and its inverse $\Gamma_1^{-1}$ is given by
\begin{equation*}
	(\Gamma_1^{-1}v)(\cdot):=v(\cdot)-\bar{\mathcal{R}}_2(\cdot,\alpha(\cdot))^{-1}\bar{\mathcal{S}}_2(\cdot,\alpha(\cdot))x^v(\cdot)+\bar{\mathcal{R}}_2(\cdot,\alpha(\cdot))^{-1}\bar{\mathcal{S}}_2(\cdot,\alpha(\cdot))\hat{x}^v(\cdot),\\
\end{equation*}
where $x^v(\cdot)$ and $\hat{x}^v(\cdot)$ are the solutions of
\begin{equation*}
\left\{\begin{aligned}
	dx^v(s)=&\big[\big(A-B_2\bar{\mathcal{R}}_2^{-1}\bar{\mathcal{S}}_2\big)x^v+\big(B_1\hat{\Theta}_1^*+B_2\bar{\mathcal{R}}_2^{-1}\bar{\mathcal{S}}_2\big)\hat{x}^v+B_2v\big]ds\\
	&+\big[\big(C-D_2\bar{\mathcal{R}}_2^{-1}\bar{\mathcal{S}}_2\big)x^v+\big(D_1\hat{\Theta}_1^*+D_2\bar{\mathcal{R}}_2^{-1}\bar{\mathcal{S}}_2\big)\hat{x}^v+D_2v\big]dW(s)\\
	&+\big[\big(\bar{C}-\bar{D}_2\bar{\mathcal{R}}_2^{-1}\bar{\mathcal{S}}_2\big)x^v+\big(\bar{D}_1\hat{\Theta}_1^*+\bar{D}_2\bar{\mathcal{R}}_2^{-1}\bar{\mathcal{S}}_2\big)\hat{x}^v+\bar{D}_2v\big]d\overline{W}(s),\\
	x^v(0)=&\ 0,\quad\alpha(0)=i,
\end{aligned}\right.
\end{equation*}
\begin{equation*}
\left\{\begin{aligned}
	d\hat{x}^v(s)=&\big[\big(A+B_1\hat{\Theta}_1^*+B_2\bar{\mathcal{R}}_2^{-1}\bar{\mathcal{S}}_2\big)\hat{x}^v-B_2\bar{\mathcal{R}}_2^{-1}\bar{\mathcal{S}}_2x^v+B_2\bar{\mathcal{R}}_2^{-1}\bar{\mathcal{S}}_2\tilde{x}^v+B_2\hat{v}\big]ds\\
	&+\big[\big(C+D_1\hat{\Theta}_1^*+D_2\bar{\mathcal{R}}_2^{-1}\bar{\mathcal{S}}_2\big)\hat{x}^v-D_2\bar{\mathcal{R}}_2^{-1}\bar{\mathcal{S}}_2x^v+D_2\bar{\mathcal{R}}_2^{-1}\bar{\mathcal{S}}_2\tilde{x}^v+D_2\hat{v}\big]dW(s),\\
	\hat{x}^v(0)=&\ 0,\quad\alpha(0)=i,
\end{aligned}\right.
\end{equation*}
and
\begin{equation*}
\left\{\begin{aligned}
	d\tilde{x}^v(s)=&\big[\big(A-B_2\bar{\mathcal{R}}_2^{-1}\bar{\mathcal{S}}_2\big)\tilde{x}^v+B_2\tilde{v}\big]ds+\big[\big(C-D_2\bar{\mathcal{R}}_2^{-1}\bar{\mathcal{S}}_2\big)\tilde{x}^v+D_2\tilde{v}\big]dW(s)\\
	&+\big[\big(\bar{C}-\bar{D}_2\bar{\mathcal{R}}_2^{-1}\bar{\mathcal{S}}_2\big)\tilde{x}^v+\bar{D}_2\tilde{v}\big]d\overline{W}(s),\\
	\tilde{x}^v(0)=&\ 0,\quad\alpha(0)=i.
\end{aligned}\right.
\end{equation*}
By the bounded inverse theorem, $\Gamma_1^{-1}$ is bounded with $\Vert\Gamma_1^{-1}\Vert>0$. Thus,
\begin{equation}\label{ineq-end2}
\begin{aligned}
	&\mathbb{E}\int_0^T\vert v(s)\vert^2ds=\mathbb{E}\int_0^T\vert (\Gamma_1^{-1}\Gamma_1v)(s)\vert^2ds\\
	&\leq\Vert\Gamma_1^{-1}\Vert\mathbb{E}\int_0^T\vert (\Gamma_1v)(s)\vert^2ds=\Vert\Gamma_1^{-1}\Vert\mathbb{E}\int_0^T\vert v+\bar{\mathcal{R}}_2^{-1}\bar{\mathcal{S}}_2\tilde{x}^0\vert^2ds.
\end{aligned}
\end{equation}
Similarly, define another bounded linear operator $\Gamma_2:L_{\mathbb{G}}^2(0,T;\mathbb{R}^{n_v})\to L_{\mathbb{G}}^2(0,T;\mathbb{R}^{n_v})$,
\begin{equation*}
	(\Gamma_2\hat{v})(\cdot):=\hat{v}(\cdot)+\hat{\mathcal{R}}_{22}(\cdot,\alpha(\cdot))^{-1}\mathbb{S}_2(\cdot,\alpha(\cdot))\hat{x}^0(\cdot).
\end{equation*}
Obviously, $\Gamma_2$ is bijective and its inverse $\Gamma_2^{-1}$ is denoted by
\begin{equation*}
	(\Gamma_2^{-1}\hat{v})(\cdot):=\hat{v}(\cdot)-\hat{\mathcal{R}}_{22}(\cdot,\alpha(\cdot))^{-1}\mathbb{S}_2(\cdot,\alpha(\cdot))\check{x}^v(\cdot),
\end{equation*}
where $\check{x}^v(\cdot)$ satisfies
\begin{equation*}
\left\{\begin{aligned}
	d\check{x}^v(s)=&\big[\big(A+B_1\hat{\Theta}_1^*-B_2\hat{\mathcal{R}}_{22}^{-1}\mathbb{S}_2\big)\check{x}^v+B_2\hat{v}\big]ds+\big[\big(C+D_1\hat{\Theta}_1^*-D_2\hat{\mathcal{R}}_{22}^{-1}\mathbb{S}_2\big)\check{x}^v+D_2\hat{v}\big]dW(s),\\
	\check{x}^v(0)=&\ 0,\quad\alpha(0)=i.
\end{aligned}\right.
\end{equation*}
By the bounded inverse theorem, $\Gamma_2^{-1}$ is bounded with $\Vert\Gamma_2^{-1}\Vert>0$, and
\begin{equation}\label{ineq-end3}
\begin{aligned}
	\mathbb{E}\int_{0}^{T}\vert \hat{v}(s)\vert^2ds\leq\Vert\Gamma_2^{-1}\Vert\mathbb{E}\int_{0}^{T}\vert (\Gamma_2\hat{v})(s)\vert^2ds=\Vert\Gamma_2^{-1}\Vert\mathbb{E}\int_{0}^{T}\vert\hat{v}+\hat{\mathcal{R}}_{22}^{-1}\mathbb{S}_2\hat{x}^0\vert^2ds.\\
\end{aligned}
\end{equation}
With the help of (\ref{ineq-end2}) and (\ref{ineq-end3}), (\ref{ineq-end1}) can be bounded as follows:
\begin{equation*}
\begin{aligned}
	-J_\gamma^0(0,0,i;u^*(\cdot),v(\cdot))&\geq\delta\mathbb{E}\int_0^T\bigg[\frac{\beta}{\Vert\Gamma_1^{-1}\Vert(1+\beta)}\vert v(s)\vert^2-\beta\vert\hat{v}(s)\vert^2 +\frac{1}{\Vert\Gamma_2^{-1}\Vert}\vert\hat{v}(s)\vert^2\bigg]ds\\
	&=\frac{\delta\beta}{\Vert\Gamma_1^{-1}\Vert(1+\beta)}\mathbb{E}\int_0^T \vert v(s)\vert^2ds.
\end{aligned}
\end{equation*} 
Set $\beta:=\frac{1}{\Vert\Gamma_2^{-1}\Vert}$, and the last equality holds. Since for any $v$,
\begin{equation*}
\begin{aligned}
	J^0(0,0,i;u^*(\cdot),v(\cdot))&=J_\gamma^0(0,0,i;u^*(\cdot),v(\cdot))+\gamma^2\mathbb{E}\int_0^T \vert v(s)\vert^2ds\\	
	&\leq\bigg(\gamma^2-\frac{\delta\beta}{\Vert\Gamma_1^{-1}\Vert(1+\beta)}\bigg)\mathbb{E}\int_0^T \vert v(s)\vert^2ds,
\end{aligned}
\end{equation*}
by the definition of $\Vert\mathcal{L}_{u^*}\Vert$, we have 
\begin{equation*}
	\Vert\mathcal{L}_{u^*}\Vert^2\equiv\underset{\substack{v\neq0\\v(\cdot)\in L_{\mathbb{F}}^2(0,T;\mathbb{R}^{n_v})}}
    \sup\frac{J^0(0,0,i;u^*(\cdot),v(\cdot))}{\mathbb{E}\int_0^T\vert v(s)\vert^2ds}\leq\gamma^2-\frac{\delta\beta}{\Vert\Gamma_1^{-1}\Vert(1+\beta)}<\gamma^2.
\end{equation*}
The proof is completed.
\end{proof}

\section{A numerical example}

In this section, we would like to present a numerical example to better demonstrate the efficacy of the proposed $H_\infty$ control strategy. Assume that $x(\cdot)$, the state of the  stock market, satisfies a SDE, $u(\cdot)$ denotes the investment strategy of an institutional investor in the market, and $v(\cdot)$ is the external unknown disturbance faced by the institutional investor. We use bull market and bear market to describe market quotation, and they correspond to two states of Markov chain $\alpha$, respectively. Suppose that $\alpha$ takes values in $\mathcal{S}=\{1,2\}$ with following generator
\begin{equation*}
	\Lambda=\begin{pmatrix}
		-1 & 1 \\
		2 & -2 \\
	\end{pmatrix}.
\end{equation*}
The horizon length for the simulation is selected as $T=3.5$. For the sake of simplifying computational complexity, we make the assumption that the system is homogeneous. Morover, we assume that the market is bear market when $\alpha=1$ and it is bull market when $\alpha=2$, which correspond to the different system coefficients shown in the following tables.

\vspace{-2mm}

\begin{table}[H]
\centering
\caption{Simulation parameters corresponding to $\alpha=1$}
\begin{tabular}{c c c c c c c c c c c c c c c c}
	\hline
	$A$ & $B_1$ & $B_2$ & $C$ & $D_1$ & $D_2$ & $\bar{C}$ & $\bar{D}_1$ & $\bar{D}_2$ & $Q$ & $R_1$ & $R_2$ & $S_1$ & $S_2$ & $G$ & $\xi$\\
	\hline
	0.1 & 0.3 & -0.2 & 0.3 & 0.3 & -0.25 & 0.1 & 0.3 & -0.2 & 0.3 & 0.2 & 0.1 & 0.2 & -0.1 & 0 & 1 \\
	\hline
	\label{1}
	\vspace{-8mm}
\end{tabular}
\end{table}

\begin{table}[H]
\centering
\caption{Simulation parameters corresponding to $\alpha=2$}
\begin{tabular}{c c c c c c c c c c c c c c c c}
	\hline
	$A$ & $B_1$ & $B_2$ & $C$ & $D_1$ & $D_2$ & $\bar{C}$ & $\bar{D}_1$ & $\bar{D}_2$ & $Q$ & $R_1$ & $R_2$ & $S_1$ & $S_2$ & $G$ & $\xi$ \\
	\hline
	0.2 & 0.2 & -0.1 & 0.2 & 0.1 & -0.1 & 0.2 & 0.1 & -0.05 & 0.2 & 0.1 & 0.05 & 0.1 & -0.05 & 0 & 1  \\
	\hline
	\label{2}
	\vspace{-8mm}
\end{tabular}
\end{table}
Obviously, the coefficients satisfy the assumption (H3). Set the disturbance attenuation level $\gamma=1$. Through computational simulations, we have obtained the following figures. 

\begin{figure}[H]
	\centering
    \includegraphics[width=0.65\linewidth]{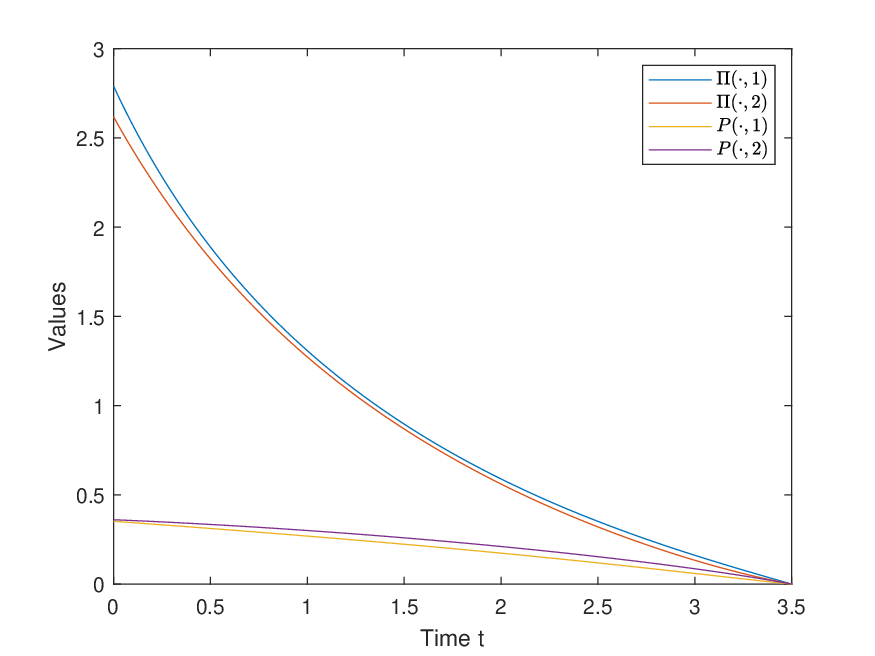}
	\caption{The numerical solutions of Riccati equations $\Pi(\cdot,1)$, $\Pi(\cdot,2)$, $P(\cdot,1)$ and $P(\cdot,2)$}
	\label{fig:1}
\end{figure}
Figure \ref{fig:1} gives the numerical solutions of Riccati equations $\Pi(\cdot,1)$, $\Pi(\cdot,2)$, $P(\cdot,1)$ and $P(\cdot,2)$.

\begin{figure}[H]
	\centering
	\includegraphics[width=0.65\linewidth]{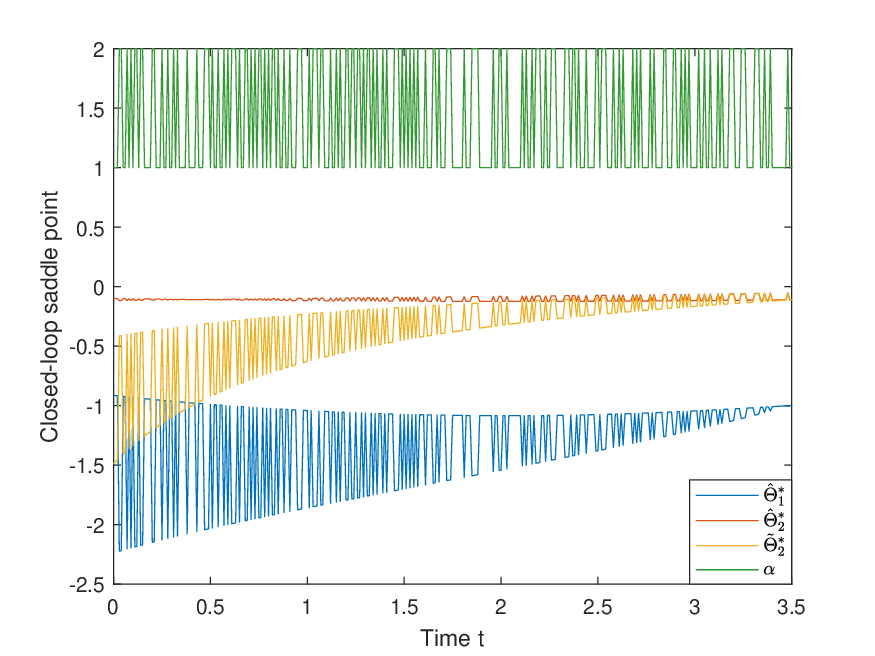}\vspace{-2mm}
	\caption{The closed-loop saddle point $(\hat{\Theta}^*(\cdot,\alpha(\cdot)),\tilde{\Theta}^*(\cdot,\alpha(\cdot)))$}
	\label{fig:2}
\end{figure}

\vspace{-2mm}

The closed-loop saddle point $(\hat{\Theta}^*(\cdot,\alpha(\cdot)),\tilde{\Theta}^*(\cdot,\alpha(\cdot)))$ of Problem (SCG) is  shown in Figure \ref{fig:2}. As can be seen from the figure, compared with the bull market ($\alpha=2$), the intensity of $\hat{\Theta}_1^*$ is relatively weaker while the intensities of $\hat{\Theta}_2^*$ and $\tilde{\Theta}_2^*$ are relatively stronger when the market is in a bear market ($\alpha=1$). This is because when the market is in a bearish state, market sentiment turns sluggish, negative news emerges frequently, and uncertainty factors mount, which leads to higher intensities of $\hat{\Theta}_2^*$ and $\tilde{\Theta}_2^*$. Meanwhile, the vast majority of investors run into losses and generally opt for a wait-and-see stance, thus resulting in a weaker intensity of $\hat{\Theta}_1^*$.

\vspace{-3mm}

\begin{figure}[H]
	\centering
	\includegraphics[width=0.65\linewidth]{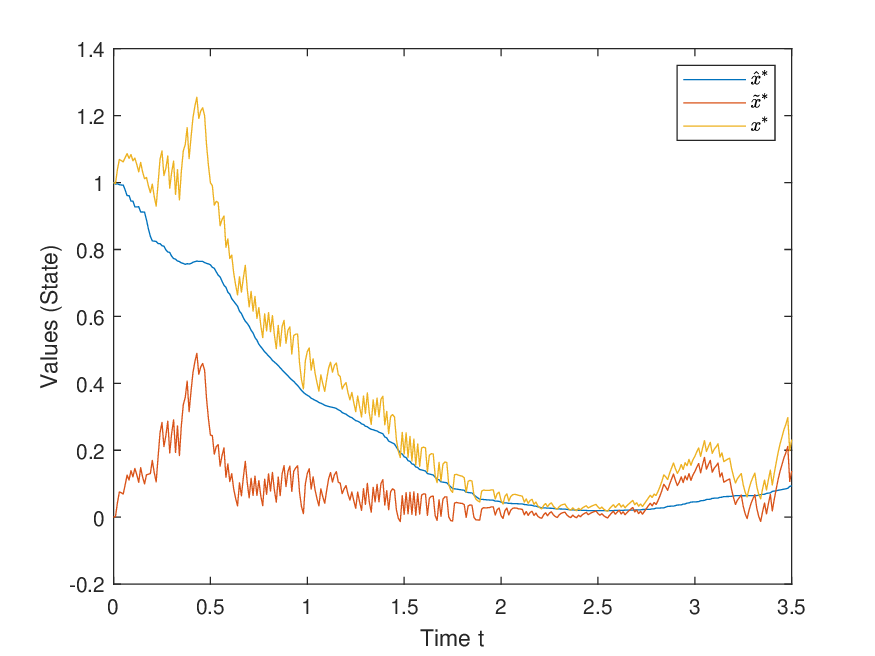}
	\caption{The state process $x^*(\cdot)$, the filtering state process $\hat{x}^*(\cdot)$ and the difference $\tilde{x}^*(\cdot)$}
	\label{fig:3}
\end{figure}
Figure \ref{fig:3} shows the corresponding state process $x^*(\cdot)$, the filtering state process $\hat{x}^*(\cdot)$ and the difference $\tilde{x}^*(\cdot)$ of the closed-loop system under the closed-loop point $(\hat{\Theta}^*(\cdot,\alpha(\cdot)),\tilde{\Theta}^*(\cdot,\alpha(\cdot)))$.

\begin{figure}[H]
	\centering
	\includegraphics[width=0.65\linewidth]{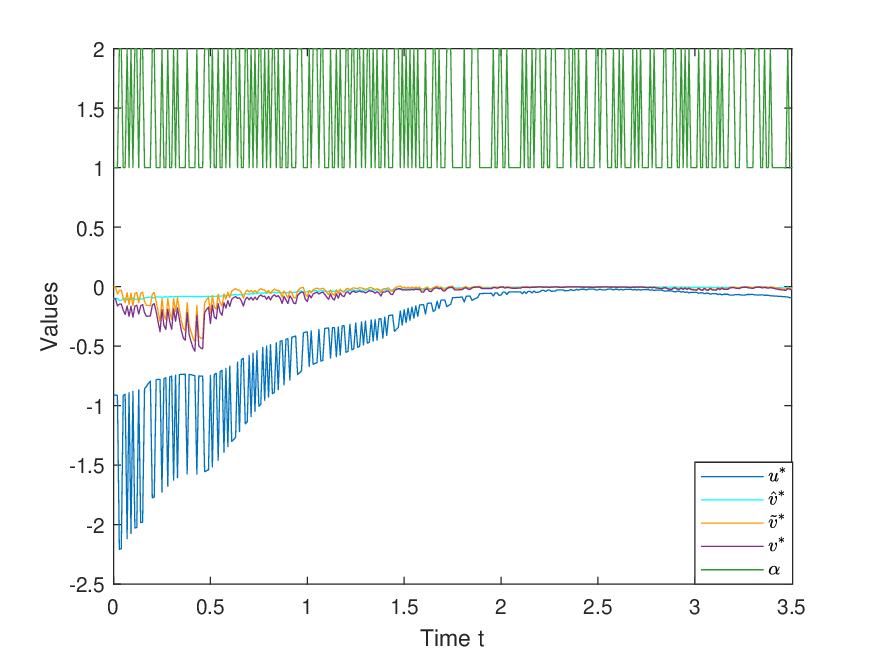}
	\caption{The robust $H_\infty$ optimal control $u^*(\cdot)$ and the worst-case disturbance $v^*(\cdot)$}
	\label{fig:4}
\end{figure}
Figure \ref{fig:4} illustrates the corresponding robust $H_\infty$ control strategy $u^*(\cdot)$ and the worst-case disturbance $v^*(\cdot)$ of Problem (R-SCG), together with the corresponding filtering process $\hat{v}^*(\cdot)$ and the difference $\tilde{v}^*(\cdot)$ associated with the worst-case disturbance $v^*(\cdot)$. As can be seen from the figure, when the market is in a bear market, the intensity of the worst-case disturbance perceived by institutional investors is relatively high; consequently, the investment intensity $u^*(\cdot)$ of institutional investors decreases. This is consistent with practical market conditions: in a bear market, insufficient investor confidence and pessimistic market sentiment lead to reduced willingness of participation on the part of both buyers and sellers, which in turn results in diminished market trading activity and lower trading volume.

To conduct a sensitivity analysis of the disturbance attenuation level for the Markovian jump system, the parameter $\gamma$ is hereby set to 2. In the same way, we can obtain the figures of the corresponding closed-loop saddle point $(\hat{\Theta}^*(\cdot,\alpha(\cdot)),\tilde{\Theta}^*(\cdot,\alpha(\cdot)))$ of Problem (SCG), the corresponding state processes ($x^*(\cdot)$, $\hat{x}^*(\cdot)$, $\tilde{x}^*(\cdot)$) of the closed-loop system under $(\hat{\Theta}^*(\cdot,\alpha(\cdot)),\tilde{\Theta}^*(\cdot,\alpha(\cdot)))$, and the solution $(u^*(\cdot), v^*(\cdot))$ of Problem (R-SCG) (i.e., the robust $H_\infty$ closed-loop optimal control $u^*(\cdot)\in\mathcal{U}[t,T]$ and the worst-case disturbance $v^*(\cdot)\in\mathcal{V}[t,T]$) corresponding to a disturbance attenuation level of 2. See Figures \ref{fig:5}-\ref{fig:7}, respectively.

\begin{figure}[H]
	\centering
	\includegraphics[width=0.65\linewidth]{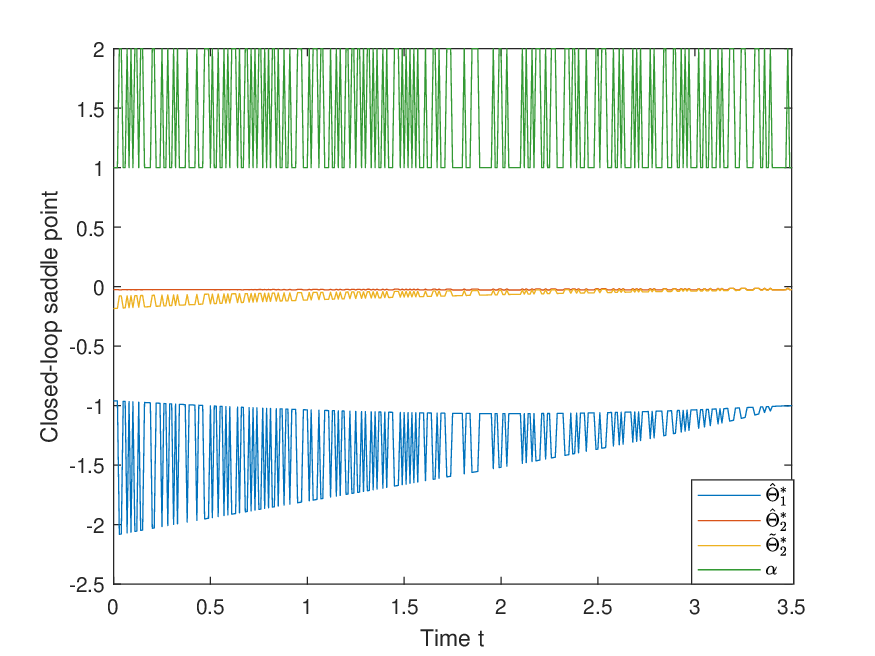}
	\caption{The closed-loop saddle point $(\hat{\Theta}^*(\cdot,\alpha(\cdot)),\tilde{\Theta}^*(\cdot,\alpha(\cdot)))$\;($\gamma=2$)}
	\label{fig:5}
\end{figure}

\begin{figure}[H]
	\centering
	\includegraphics[width=0.65\linewidth]{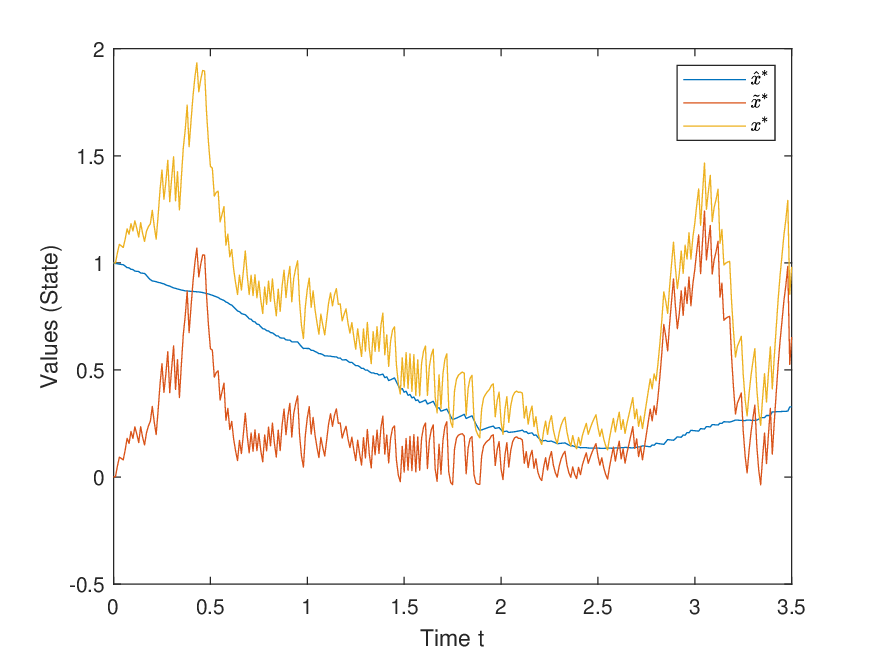}
	\caption{The state processes $x^*(\cdot)$, $\hat{x}^*(\cdot)$ and $\tilde{x}^*(\cdot)$ \;($\gamma=2$)}
	\label{fig:6}
\end{figure}

\begin{figure}[H]
	\centering
	\includegraphics[width=0.65\linewidth]{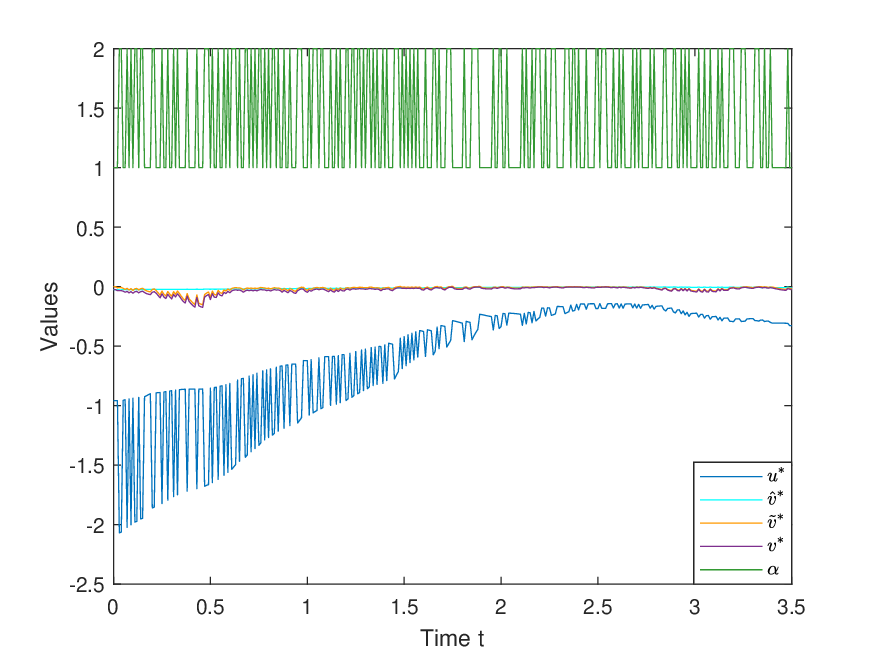}
	\caption{The robust $H_\infty$ optimal control $u^*(\cdot)$ and the worst-case disturbance $v^*(\cdot)$\;($\gamma=2$)}
	\label{fig:7}
\end{figure}

Next, we intend to conduct a sensitivity analysis on the disturbance attenuation level $\gamma$. With all other parameters fixed, we observe the impact of the disturbance attenuation level on the closed-loop saddle point, the corresponding state processes of the closed-loop system, the robust $H_\infty$ optimal control, and the worst-case disturbance. See Figures \ref{fig:8}-\ref{fig:10}, respectively. 

\begin{figure}[H]
	\centering
	\includegraphics[width=0.65\linewidth]{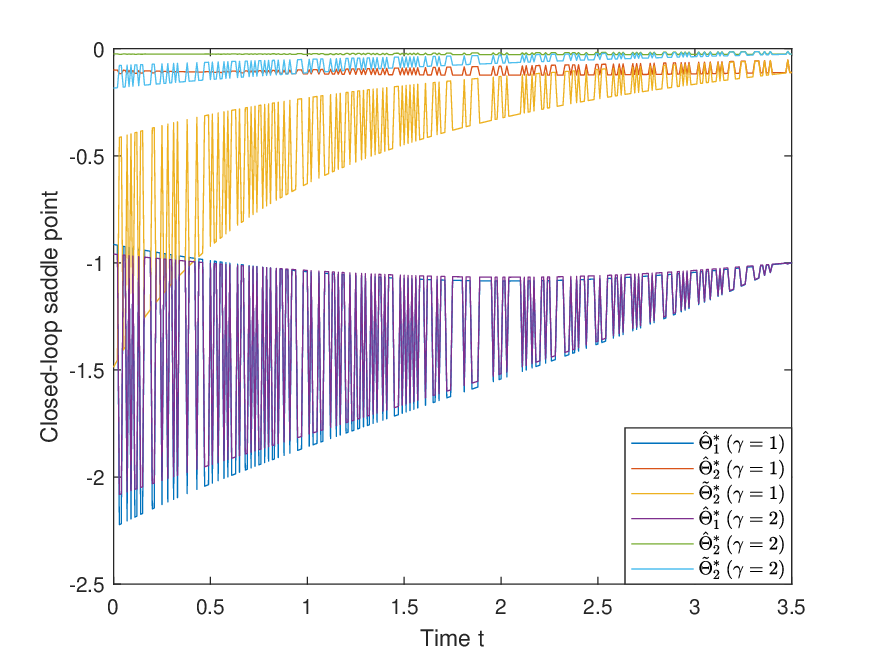}
	\caption{The impact of $\gamma$ on the closed-loop saddle point $(\hat{\Theta}^*(\cdot,\alpha(\cdot)),\tilde{\Theta}^*(\cdot,\alpha(\cdot)))$}
	\label{fig:8}
\end{figure}
From Figure \ref{fig:8}, we can see that the impact of the disturbance attenuation level on $\hat{\Theta}_1^*(\cdot,\alpha(\cdot))$ is relatively insignificant. However, as the disturbance attenuation level increases, the corresponding intensities of $\hat{\Theta}_2^*(\cdot,\alpha(\cdot))$ and $\tilde{\Theta}_2^*(\cdot,\alpha(\cdot))$ decrease significantly. 

\begin{figure}[H]
	\centering
	\includegraphics[width=0.65\linewidth]{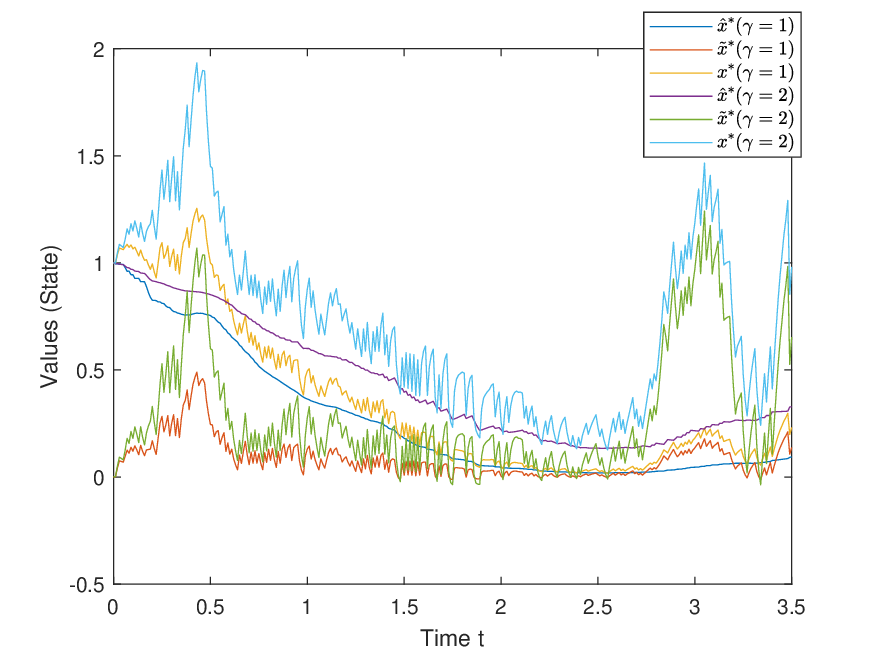}
	\caption{The impact of $\gamma$ on the state processes}
	\label{fig:9}
\end{figure}
It can be seen from Figure \ref{fig:9} that the higher the disturbance attenuation level, the greater the intensities of the corresponding state process $x^*(\cdot)$, filtering state process $\hat{x}^*(\cdot)$, and the difference process $\tilde{x}^*(\cdot)$ of the closed system under the closed-loop saddle point.

\begin{figure}[H]
	\centering
	\includegraphics[width=0.65\linewidth]{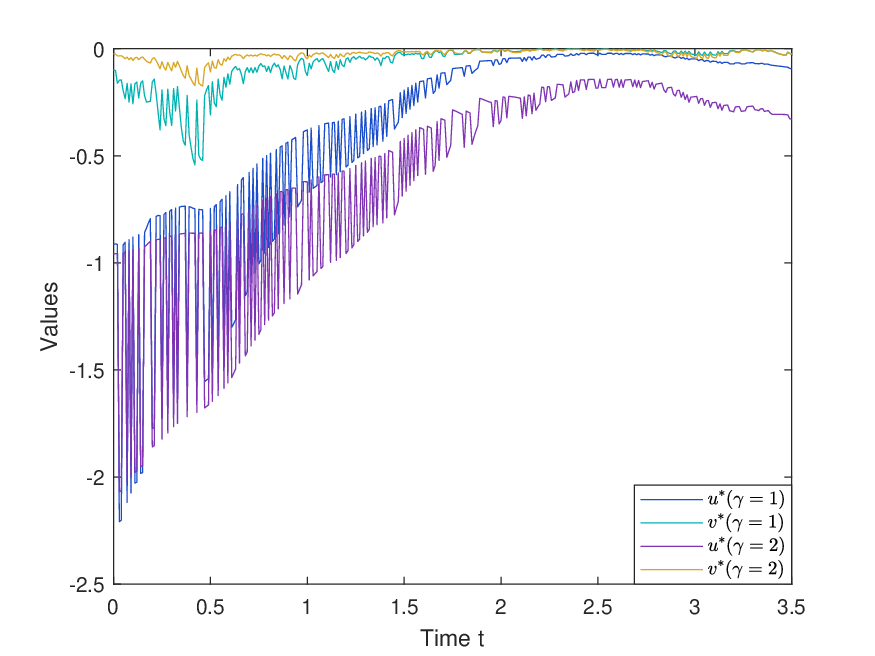}
	\caption{The impact of $\gamma$ on the $H_\infty$ control $u^*(\cdot)$ and the worst-case disturbance $v^*(\cdot)$}
	\label{fig:10}
\end{figure}
As can be seen from Figure \ref{fig:10}, when the market remains in the same state, the higher the disturbance attenuation level, the greater the intensity of the corresponding optimal control strategy; conversely, the intensity of the worst-case disturbance decreases under this condition. This phenomenon is highly consistent with practical market scenarios. A higher disturbance attenuation level indicates that institutional investors can tolerate a greater degree of impact of external disturbances on the system output. Under such circumstances, institutional investors no longer maintain a risk-averse stance; instead, they become highly optimistic about the stock market, believing that the intensity of the worst-case disturbance is relatively low, and thus naturally increase their investment intensity.

\section{Conclusions}

In this paper, we have studied an SLQ optimal control problem with Markov chain and model uncertainty under partial information, where both the drift and diffusion terms of the state equation, as well as the cost functional contain the control and the external unknown disturbance, and the information available to the control is based on a sub-$\sigma$-algebra of the filtration generated by the underlying Brownian motion and the Markov chain. By $H_\infty$ control theory and the zero-sum game approach, a soft-constrained zero-sum LQ stochastic differential game with Markov chain and partial information has been considered. By the filtering technique, the Riccati equation approach, the method of orthogonal decomposition, and the completion-of-squares method, the closed-loop saddle point of the zero-sum game has been derived by means of the optimal feedback control-strategy pair. Then, we have also demonstrated that the corresponding outcome of the closed-loop saddle point ensures the $H_\infty$ performance holds. At last, we have given a numerical example as further illustrations of theoretical results.

In the future, it is interesting to consider partially observed SLQ optimal control problems with Markovian regime switching and model uncertainty.

\end{document}